\newtheorem{dfn}{Definition}[section]
\newtheorem{thm}[dfn]{Theorem}
\newtheorem{rmk}[dfn]{Remark}
\newtheorem{cor}[dfn]{Corollary}
\newtheorem{que}[dfn]{Question}
\newtheorem{con}[dfn]{Conjecture}
\newtheorem{exa}[dfn]{Example}
\newtheorem{ex}[dfn]{Exercise}
\numberwithin{equation}{section}
\newcommand{\Z}{{\mathbb Z}}
\newcommand{\R}{{\mathbb R}}
\newcommand{\N}{{\mathbb N}}
\newcommand{\C}{{\mathbb C}}
\newcommand{\vol}{\mathrm{vol\,}}
\newcommand{\dd}{{\mathrm d}}
\newcommand{\p}{\partial}
	\author{Gabriele Benedetti}\address{Mathematisches Institut, Ruprecht-Karls-Universit\"at Heidelberg}\email{gbenedetti@mathi.uni-heidelberg.de}\thanks{This work is supported by the Deutsche Forschungsgemeinschaft (DFG, German Research Foundation) under Germany's Excellence Strategy EXC 2181/1-390900948 (the Heidelberg STRUCTURES Excellence Cluster), the Collaborative Research Center SFB/TRR 191 - 281071066	(Symplectic Structures in Geometry, Algebra and Dynamics), and the Research Training Group RTG 2229 - 281869850 (Asymptotic Invariants and Limits of Groups and Spaces).}
\title[Systolic inequalities: From Riemannian metrics to Symplectic manifolds]{First steps into the world of systolic inequalities:\\ \smallskip From Riemannian to Symplectic geometry}
\begin{document}
\begin{abstract}
Our aim is to give a friendly introduction to systolic inequalities. We will stress the relationships between the classical formulation for Riemannian metrics and more recent developments related to symplectic measurements and the Viterbo conjecture. This will give us a perfect excuse to introduce the reader to some important ideas in Riemannian and symplectic geometry. 
		\smallskip
		
		\noindent\textsc{Disclaimer.} These notes are an expanded version of two talks given at the Dutsch Differential Topology and Geometry Seminar on November 27, 2020. I am grateful to the organizers \'Alvaro del Pino G\'omez, Federica Pasquotto, Thomas Rot and Robert Vandervorst for giving me the opportunity to speak at this wonderful event. The target reader of these notes is a bachelor/master student having some basic knowledge of manifolds (curves and surfaces should mostly suffice). This is a very preliminary version and I am not an expert in the field: If you spot any mistake, notice some missing references, or have questions on the exercises please drop me a message at the email address below.
	\end{abstract}
	\maketitle
\begin{figure}[h]
	\vspace{-20pt}\includegraphics[width=.7\textwidth]{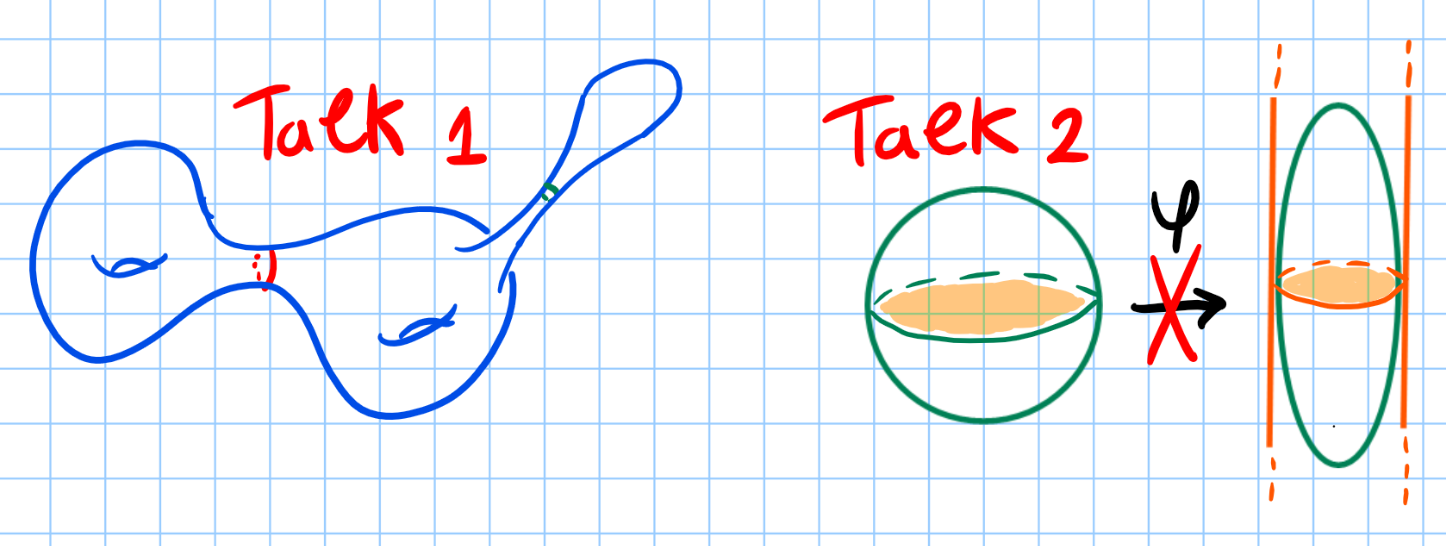}
\end{figure}
\vspace{-15pt}

\section{First talk: Systolic inequalities in Riemannian geometry}
Excellent references to systolic inequalities in Riemannian geometry are the short introductory paper \cite{Ber08} by Marcel Berger and the survey \cite{Gut10} by Larry Guth which discuss how systolic inequalities fit with other deep problems in geometry. This first talk is mainly inspired by Chapter 7.2 in \cite{Ber03}, where you will find many more beautiful results and nice pictures. A monograph dedicated to systolic inequalities is \cite{Kat07} by Mikhail Katz, which also contains interesting historical sketches. We seize the occasion to express our gratitude to Mikhail Katz for his precious comments on these notes and for highlighting several inaccuracies.
\subsection{The setting}
Riemannian geometry aims at defining quantities like angles, distances, areas, volumes, etc.\ on manifolds, namely spaces which locally look like the euclidean space $\R^m$ (for instance think at an embedded surface in three-dimensional space). So let $M$ be a manifold, which we take compact, without boundary and of dimension $m\geq 2$ (for $m=1$, $M$ would be just a circle).

To do geometry on the manifold $M$ we consider a Riemannian metric $g$ on $M$, namely a scalar product on each tangent space $T_pM$ (we will denote by $|\cdot|_g$ the corresponding norm) varying smoothly with the point $p\in M$. We compute the length of curves $\gamma:[a,b]\to M$ by 
\[
\ell_g(\gamma)=\int_a^b|\dot\gamma|_g\dd t
\]
and more generally the $n$-dimensional volume $\vol_{\!g}(N)$ of $n$-dimensional submanifolds $N\subset M$: 
\[
\vol_{\!g}(N)=\int_N\dd\vol_{\!N,g},
\]
where $\dd\vol_{\!N,g}$ is a naturally defined density (thus, for curves $\dd\vol_{\!\gamma,g}=|\dot\gamma|\dd t$).

However, there is no unique Riemannian metric on $M$ and therefore we consider the space
\[
\mathcal R(M):=\big\{\,g\text{ Riemannian metric on }M\,\big\}.
\]
This space is infinite dimensional even after identifying metrics which are related to each other by an isometry or a rescaling. In the following discussion, we will often tacitly make this identification. We will see that the systolic inequality will give us a way to study the space $\mathcal R(M)$. First, we consider some examples.

\subsection{Metrics on surfaces}

Let us start by looking at the case of surfaces $m=2$ in more detail. If $M$ is orientable, we have the sphere $S^2$, the torus $T^2$ which is obtained from the sphere attaching one handle (cylinder), and more generally an orientable surface $\Sigma_k$ of genus $k$ obtained attaching $k$ handles to the sphere. If $M$ is not orientable, we have the real projective plane $P^2$ obtained identifying pairs of antipodal points on the sphere, the Klein bottle $K^2$ obtained attaching one cross-cap (Möbius strip) to $P^2$, and more generally an unorientable surface $\tilde\Sigma_k$ obtained attaching $k$ cross-caps to the projective plane.
\begin{figure}[h]
	\includegraphics[width=.9\textwidth]{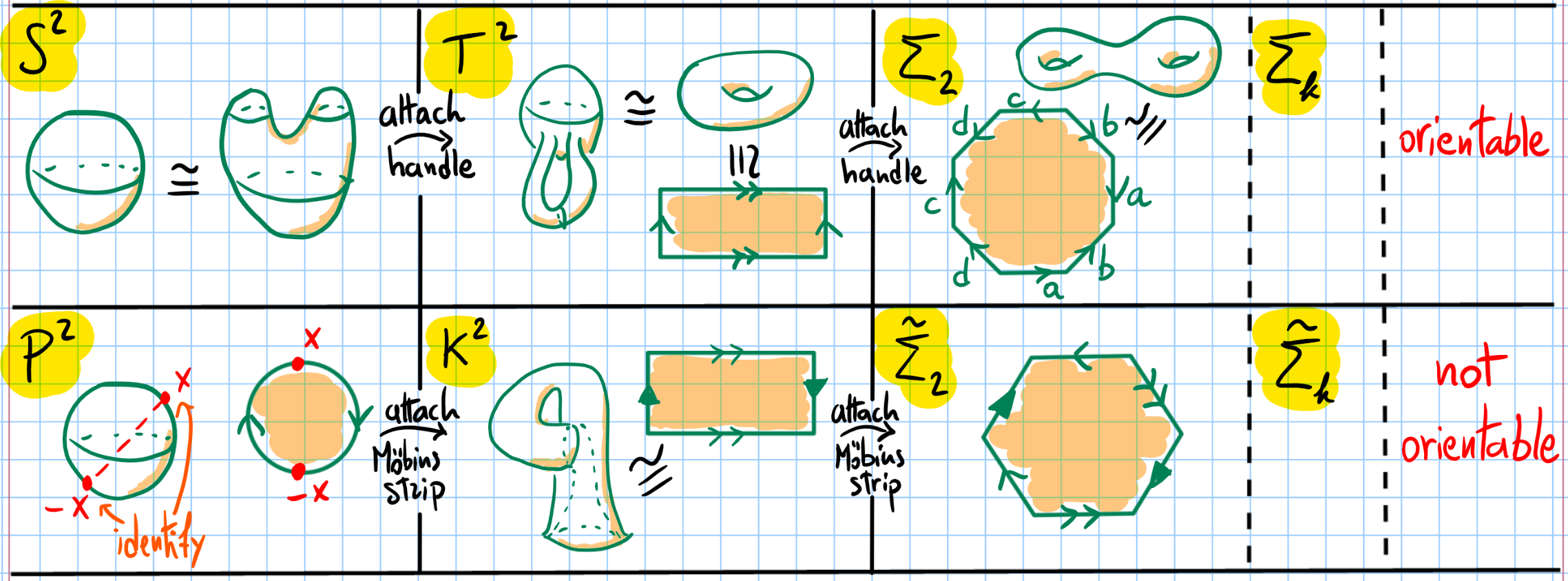}\caption{Topological classification of surfaces}
\end{figure}

For orientable surfaces $M$, one can get an idea of the richness of the space $\mathcal R(M)$ by considering the metrics induced by different embeddings of $M$ inside $\R^3$. Have a look, for instance at the embedded tori above.
\begin{figure}
\includegraphics[width=.8\textwidth]{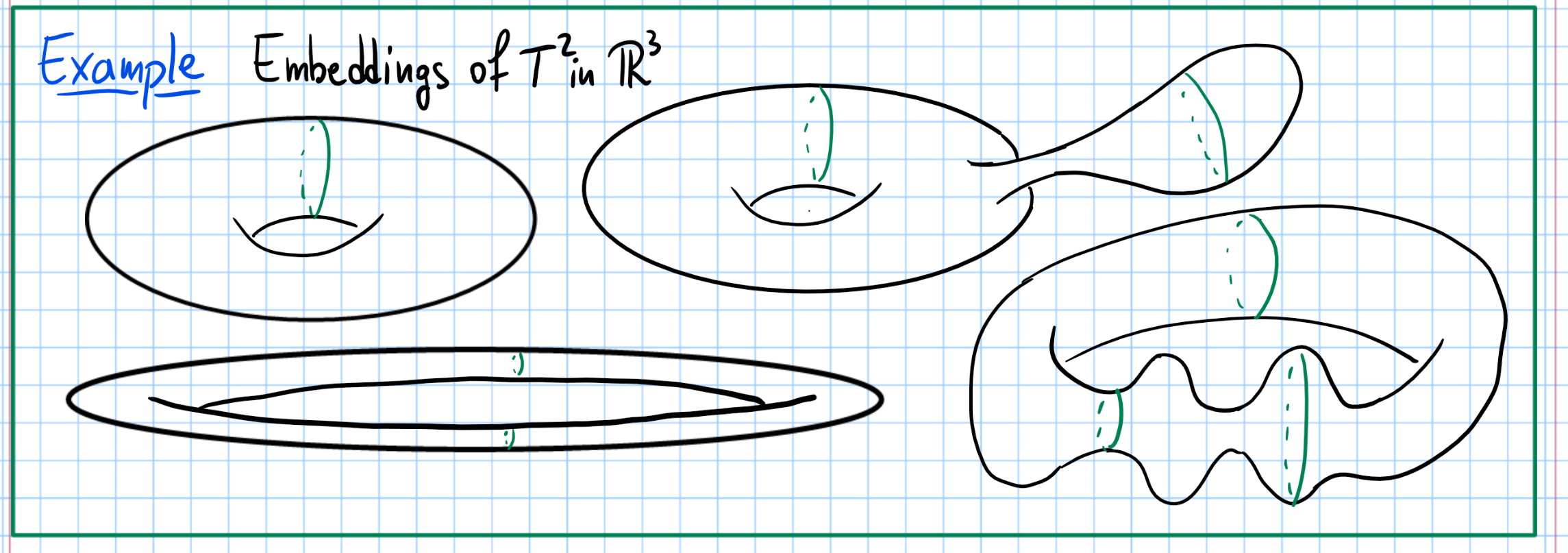}\caption{Some tori embedded in $\R^3$ (inspired by \cite{Gut10})}
\end{figure}
One could also focus on special metrics inside $\mathcal R(M)$ such as metrics of constant curvature, for which intuitively the geometry looks the same in every direction. The sign of the curvature depends only on the topology of $M$ by the Gauss--Bonnet theorem. On $S^2$ and $P^2$, the curvature is positive and we only have one such metric up to homothety (and isometry) coming from the round sphere in $\R^3$.

On $T^2$ and $K^2$ metrics of constant curvature are flat, namely they have curvature zero. On $T^2$ flat metrics cannot be obtained embedding the torus in $\R^3$ (why?). Instead, they are constructed gluing the opposite side of a parallelogram in the plane via translation. Indeed, the Euclidean metric on the parallelogram induces a Riemannian metric on the torus since translations are isometries of the Euclidean metric. Still how to classify these flat metrics up to isometry requires a bit of thought since you can cut the parallelogram along the diagonal and glue two of the parallel sides to get an isometric torus.
\begin{figure}[h]
	\includegraphics[width=.4\textwidth]{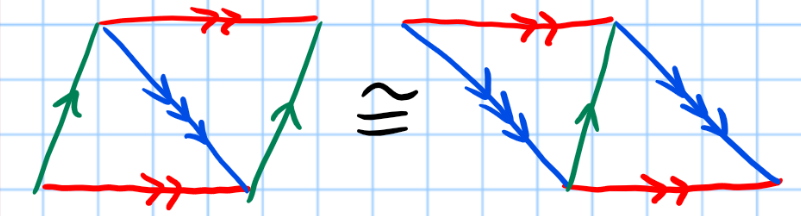}\caption{Two parallelograms yielding the same flat torus}
\end{figure}
\begin{thm}[Flat tori]\label{t:flat}
Every flat torus is obtained, up to isometry and homothety, from exactly one parallelogram with vertices $(0,0)$, $(1,0)$, $(x_0,y_0)$ and $(x_0+1,y_0)$, where
\[
(x_0,y_0)\in\Gamma:=\{x^2+y^2\geq 1,\ 0\leq x\leq 1/2,\ y\geq0 \}.
\]
\end{thm}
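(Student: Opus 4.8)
The plan is to identify a flat torus with a quotient $\R^2/\Lambda$ of the Euclidean plane by a lattice $\Lambda$ (a discrete subgroup of rank $2$) and then to classify lattices up to the combined action of scalings and orthogonal transformations. First I would recall that a flat torus $M$ has Euclidean universal cover, so $\pi_1(M)\cong\Z^2$ acts on $\R^2$ by isometries $v\mapsto Av+b$ with $A\in O(2)$; this deck action is free, and since $M$ is orientable a short case analysis (a nontrivial rotation or a reflection has a fixed point, and a glide reflection is orientation-reversing) shows that every deck transformation is a translation, so the deck group is a lattice $\Lambda$ and $M=\R^2/\Lambda$, with the spanned parallelogram as a fundamental domain. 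An isometry between two such tori lifts to an isometry of $\R^2$, which after composing with a translation fixes the origin; hence two flat tori are isometric up to homothety exactly when one lattice is $\lambda A$ applied to the other for some $\lambda>0$ and $A\in O(2)$. So it suffices to exhibit in each such orbit a unique representative of the stated shape.

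The heart of the matter is the two-dimensional reduction theory of Lagrange and Gauss: \emph{every lattice $\Lambda$ has a basis $(v_1,v_2)$ with $v_1$ a shortest nonzero vector of $\Lambda$ and $v_2$ a shortest vector of $\Lambda\setminus\Z v_1$.} Such $v_1,v_2$ exist by discreteness, and $v_1$ is automatically primitive, so $v_1,v_2$ are $\R$-linearly independent. To see they generate $\Lambda$ over $\Z$, write $w=\alpha v_1+\beta v_2\in\Lambda$ and subtract suitable integer multiples of $v_2$ and then of $v_1$ to arrange $|\alpha|,|\beta|\le\tfrac12$; if $\beta\neq0$ the reduced vector lies in $\Lambda\setminus\Z v_1$ but, by the triangle inequality together with $|v_1|\le|v_2|$ and the fact that $v_1,v_2$ are not parallel, is strictly shorter than $v_2$, contradicting minimality; hence $\beta\in\Z$, and then $w-\beta v_2\in\Z v_1$ gives $\alpha\in\Z$. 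I expect this lemma to be the main obstacle, essentially in pinning down the strict inequality and keeping the primitivity bookkeeping straight.

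Next I would normalize. Acting by a rotation and a scaling puts $v_1=(1,0)$; then $v_2=(x_0,y_0)$ with $y_0\neq0$, and a reflection in the $x$-axis makes $y_0>0$ while a reflection in the $y$-axis (which preserves $\Z v_1$) makes $x_0\geq0$. From $|v_2|\geq|v_1|=1$ we read off $x_0^2+y_0^2\geq1$, and applying the minimality of $v_2$ to the vector $v_2-v_1=(x_0-1,y_0)\in\Lambda\setminus\Z v_1$ gives $(x_0-1)^2\geq x_0^2$, i.e.\ $x_0\leq\tfrac12$. Thus $(x_0,y_0)\in\Gamma$, which proves the existence half of the statement.

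It remains to prove uniqueness: if $\lambda A\Lambda=\Lambda'$ with $\lambda>0$, $A\in O(2)$, and both lattices are of the normalized form with parameters in $\Gamma$, then the parameters agree. A short computation shows $(1,0)$ realizes the minimal length in a normalized lattice, so comparing minimal lengths forces $\lambda=1$; then $A$ carries a reduced basis of $\Lambda$ to one of $\Lambda'$, and since orthogonal maps taking one normalized reduced basis to another are (generically) the diagonal maps $\mathrm{diag}(\pm1,\pm1)$, the sign normalizations force $(x_0,y_0)=(x_0',y_0')$. The only subtlety is along $\partial\Gamma$ — the arc $x_0^2+y_0^2=1$ and the segment $x_0=\tfrac12$ — where $\Lambda$ acquires extra shortest vectors and the reduced basis is no longer unique up to sign; there one checks directly that all admissible choices still yield the same point of $\Gamma$. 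Equivalently, identifying $\R^2$ with $\C$ and lattices up to $\R_{>0}\cdot SO(2)$ with points of the upper half-plane modulo $PSL(2,\Z)$, the extra reflections in $O(2)$ amount to adjoining $\tau\mapsto-\bar\tau$, and $\Gamma$ is exactly the standard fundamental domain of the resulting extended modular group.
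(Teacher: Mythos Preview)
Your proposal is correct and follows essentially the same approach as the paper, which leaves the proof as an exercise with the hint to take $v_1$ a shortest nonzero lattice vector, show that $v_2$ can be chosen in $\Gamma$, and then characterize this $v_2$ as the shortest element of $G\setminus\Z v_1$ for uniqueness. Your write-up is in fact more detailed than the paper's sketch, including the justification that the deck group consists of translations, the argument that a reduced pair is a basis, and the careful handling of the boundary of $\Gamma$ via the extended modular group.
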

\medskip

\noindent{\!
\begin{minipage}[t]{.58\textwidth}
\begin{ex}
Prove the theorem. Hint: Use that a flat torus is obtained as the quotient $\R^2/G$ for some subgroup of translations $G$ generated by $p\mapsto p+v_1$ and $p\mapsto p+v_2$, where $v_1,v_2\in\R^2$ are linearly independent. Without loss of generality you can assume that $v_1$ is an element of minimal norm in $G\setminus 0$. Then, show that $v_2$ can be chosen to lie in $\Gamma$ (existence). Finally, prove that $v_2$ obtained in this way is the shortest element in $G$ which is not a multiple of $v_1$ (uniqueness).
\end{ex}	
\end{minipage}
\begin{minipage}[t]{.42\textwidth}
	\kern-7pt\centering\includegraphics[width=.57\textwidth]{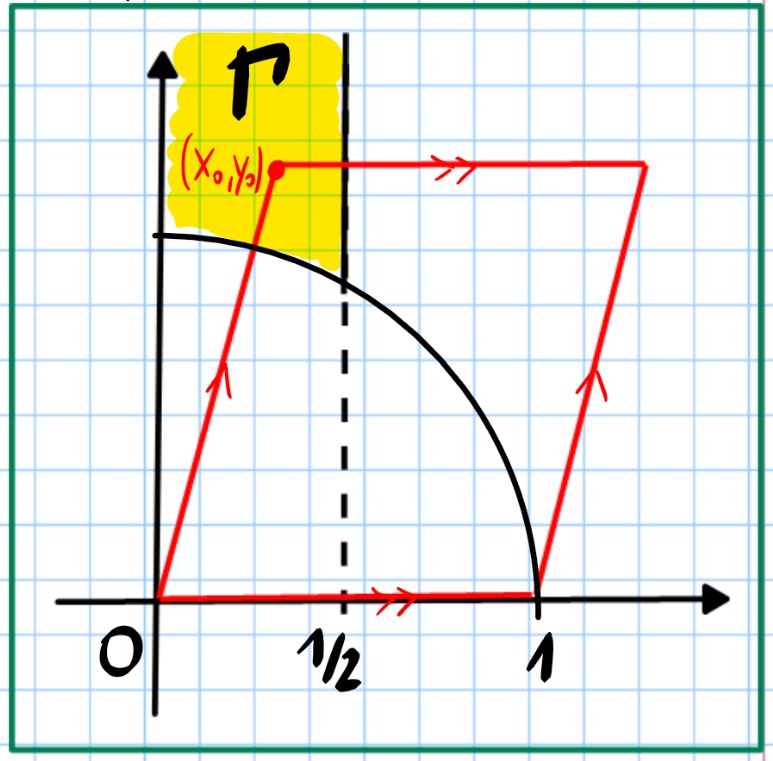}
	\captionof{figure}{The domain $\Gamma$}
\end{minipage}
}
\medskip

On $K^2$ flat metrics are obtained by identifying the opposite side of a rectangle, where the horizontal sides are identified by a translation and the vertical sides are identified by a reflection through their midpoint. Thus flat metrics on $K^2$ up to isometry and homothety are parametrized by the positive real number which is the ratio between the length of the horizontal and the vertical side of the rectangle.
\begin{ex}
Prove the classification of flat Klein bottles. Hint: A flat Klein bottle is obtained as the quotient $\R^2/G$, where $G$ is the subgroup of isometries of $\R^2$ generated by a translation of vector $v_1$ and a glide reflection which is composition of a reflection with respect a line and a translation by a vector $v_2$ parallel to that line. All elements of $G$ are either translations or glide reflections. Without loss of generality consider the non-zero translation with minimal $|v_1|$ and the glide reflection with minimal $|v_2|$ and show that $v_1$ and $v_2$ are orthogonal. 	
\end{ex}
If we now consider surfaces of higher genus $\Sigma_k$ or $\tilde\Sigma_k$, metrics of constant curvature have negative curvature (so called hyperbolic metrics) and classifying them up to isometry is a highly non-trivial task. This has strong relationships with complex analysis and to the so-called moduli space of Riemann surfaces (see \cite{FM12} for more information).

Complex analysis is indeed a powerful tool to study the geometry of surfaces and gives us a normal form for an arbitrary metric on an arbitrary surface, which follows from the celebrated Riemann's uniformization theorem.
\begin{thm}[Uniformization]
Every Riemannian metric $g$ on a surface is conformally equivalent to a metric $g_*$ of constant curvature. In other words, $g$ is isometric to $f^2g_*$, where $f$ is a smooth positive function on the surface.
\end{thm}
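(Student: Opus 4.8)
I will only sketch a strategy, since a complete proof rests on substantial analysis as well as on the classical form of Riemann's theorem. The plan is to upgrade the Riemannian surface $(M,g)$ to a Riemann surface, to uniformize its universal cover, and to push the resulting constant-curvature model metric back down to $M$. The first step is to show that every point of $M$ has a coordinate chart $(x,y)$ in which $g=\lambda\,(\dd x^2+\dd y^2)$ for some smooth positive function $\lambda$ — a so-called \emph{isothermal} chart. Writing $z=x+iy$, this amounts to solving locally a Beltrami-type equation $\p_{\bar z}w=\mu\,\p_z w$ with $\|\mu\|_\infty<1$ read off from the coefficients of $g$, and producing a solution whose differential is everywhere invertible. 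This existence result (Korn--Lichtenstein) is the analytic heart of the argument and, I expect, the main obstacle in a genuinely self-contained treatment; at the level of these notes I would state it and refer to the literature rather than prove it.

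Granting this, any two overlapping isothermal charts differ by an angle-preserving diffeomorphism between planar domains, hence — once orientations are fixed — by a biholomorphism, so the isothermal charts form a holomorphic atlas and $M$ becomes a Riemann surface $X$. Its universal cover $\wt X$ is then a simply connected Riemann surface, so by the uniformization theorem for simply connected Riemann surfaces it is biholomorphic to $\mathbb{CP}^1$, to $\C$, or to the unit disk $\mathbb D$, each of which carries a canonical metric of constant curvature $+1$, $0$, $-1$ respectively (Fubini--Study, Euclidean, Poincar\'e). The automorphisms of $\wt X$ that act freely and properly discontinuously — in particular the deck transformations of $\wt X\to X$ — turn out in each case to be genuine \emph{isometries} of that canonical metric, not merely conformal maps (translations when $\wt X=\C$, hyperbolic isometries when $\wt X=\mathbb D$, no nontrivial ones when $\wt X=\mathbb{CP}^1$), so the canonical metric descends to a metric $g_*$ of constant curvature on $X=M$. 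Reading $g$ and $g_*$ in the same holomorphic chart, both are positive multiples of $\dd x^2+\dd y^2$, hence of each other, and the resulting local ratios patch together to a globally defined positive function $f$ on $M$ with $g=f^2 g_*$; thus $g$ is literally equal to $f^2 g_*$, in particular isometric to it. For non-orientable $M$ one runs the same argument on the orientation double cover and descends along the deck involution, which acts there as an orientation-reversing isometry of the canonical metric.

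An alternative route avoids the universal cover: one looks directly for a conformal representative $g_*=e^{2u}g$ whose Gaussian curvature is a constant $c$. Since under a conformal change $K_{g_*}=e^{-2u}(K_g-\Delta_g u)$, the function $u$ must solve
\[
\Delta_g u = K_g - c\,e^{2u},
\]
and Gauss--Bonnet forces $\mathrm{sign}(c)$ to agree with $\mathrm{sign}(\chi(M))$. When $\chi(M)<0$ one produces $u$ by the direct method in the calculus of variations; when $\chi(M)=0$ one solves the linear equation $\Delta_g u=K_g$, whose right-hand side has zero mean by Gauss--Bonnet and therefore lies in the range of $\Delta_g$; the remaining case $\chi(M)>0$ — the sphere $S^2$ and the projective plane $P^2$ — is the most delicate but still works out. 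Either approach establishes the normal form.
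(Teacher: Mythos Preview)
The paper does not prove this theorem at all: it simply states it as the ``celebrated Riemann's uniformization theorem'' and immediately uses it as a black box (most prominently in the proof of Loewner's inequality, Theorem~\ref{t:loewner}). So there is no proof in the paper against which to compare your proposal; you have supplied strictly more than the paper does.

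That said, your sketch is a faithful outline of the two standard routes. A couple of small comments. In the first approach, your parenthetical ``no nontrivial ones when $\wt X=\mathbb{CP}^1$'' is correct in the sense you need --- every M\"obius transformation has a fixed point, so no nontrivial subgroup acts freely --- but be careful not to suggest that all automorphisms of $\mathbb{CP}^1$ are isometries of the round metric (only the $\mathrm{PSU}(2)$ part is); the point is moot precisely because the deck group is trivial. In the second approach, the case $\chi(M)>0$ is indeed the delicate one: for $S^2$ the relevant functional is not coercive and the direct method fails, and one needs either a concentration--compactness analysis, Moser--Trudinger type inequalities, or a flow argument; for $P^2$ one can pass to the double cover. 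Since these notes are pitched at an introductory level and explicitly treat uniformization as input, your decision to state the analytic core (Korn--Lichtenstein, or the elliptic PDE) and refer out is entirely in keeping with the paper's spirit.
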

\begin{rmk}
The adjective conformal refers to the fact that $f^2g_*$ and $g_*$ measure the same angles although for $f\neq1$ do not measure the same lengths.	
\end{rmk}
\subsection{Systoles}
A powerful way to study $g\in\mathcal R(M)$ is via its geodesics $\gamma:\R\to M$. From a geometric point of view, these are the curves with constant speeds which locally minimize the length. From an analytical point of view they are curves with zero acceleration, namely they satisfy the second-order ODE $\nabla_{\dot\gamma}\dot\gamma=0$, where $\nabla$ is the Levi-Civita connection of $g$.
\begin{thm}[Cartan 1928]
If $M$ is not simply connected and $g\in\mathcal R(M)$, then there exists a non-contractible loop of minimal length and any such loop is a periodic geodesic.
\end{thm}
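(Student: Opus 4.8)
The plan is to realise the infimum
\[
L:=\inf\big\{\,\ell_g(\gamma):\gamma\text{ a non-contractible loop in }M\,\big\}
\]
by an actual loop, and then to upgrade that loop to a smooth periodic geodesic via a first-variation argument. The infimum is taken over a non-empty set precisely because $M$ is not simply connected. Moreover $L>0$: since $M$ is compact there is $\varepsilon>0$ — for instance twice the injectivity radius of $g$, or a Lebesgue number of a finite cover of $M$ by geodesically convex balls — such that every loop of $g$-length $<\varepsilon$ lies inside one such ball and is therefore contractible; hence $L\geq\varepsilon$.

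For existence, the key is to restrict attention to \emph{broken geodesics} with a uniformly bounded number of vertices. Fix $N\in\N$ with $N\varepsilon>L+1$. Any non-contractible loop $\gamma$ with $\ell_g(\gamma)<L+1$ can be cut into $N$ consecutive arcs of equal $g$-length $\ell_g(\gamma)/N<\varepsilon$; replacing each arc by the unique minimizing geodesic between its endpoints (which stays in the same convex ball) produces a broken geodesic $\bar\gamma$ that is freely homotopic to $\gamma$ — each arc being homotoped rel endpoints inside a ball — and has $\ell_g(\bar\gamma)\leq\ell_g(\gamma)$. Hence $L$ equals the infimum of length over $N$-broken geodesics, and such a loop is encoded by the ordered tuple of its vertices $(p_1,\dots,p_N)\in M^N$, on which the length is a continuous function. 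Taking a minimizing sequence, passing to a convergent subsequence of vertex tuples (possible since $M^N$ is compact), and using that two loops which are $C^0$-close within the injectivity radius are freely homotopic, we obtain a limiting $N$-broken geodesic $\gamma_*$ with $\ell_g(\gamma_*)=L>0$ that still lies in a non-trivial free homotopy class; in particular $\gamma_*$ is a genuine non-constant loop realising the minimal length.

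To see that $\gamma_*$ is a smooth periodic geodesic, parametrise it by arc length, so that it has unit speed and solves $\nabla_{\dot\gamma}\dot\gamma=0$ away from its vertices and the point where it closes up. If $\gamma_*$ had a corner at some such point $q$ — i.e.\ the one-sided velocities at $q$ differed — then, working in geodesic normal coordinates at $q$, replacing a short sub-arc of $\gamma_*$ through $q$ by the minimizing geodesic between its endpoints would strictly decrease the length while leaving the free homotopy class unchanged, contradicting minimality. Hence $\gamma_*$ is $C^1$; by uniqueness of the geodesic with prescribed initial position and velocity, the two geodesic pieces meeting at any such $q$ fit together smoothly, so $\gamma_*$ is a smooth loop with $\nabla_{\dot\gamma_*}\dot\gamma_*=0$ everywhere, and extending it periodically gives the sought periodic geodesic $\gamma:\R\to M$. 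The same reasoning applies to any minimal-length non-contractible loop.

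The main obstacle is the existence step, specifically keeping the free homotopy class under control: one must ensure both that the broken-geodesic replacement does not change the class and that the $C^0$-limit of the minimizing sequence stays in a non-trivial class and does not degenerate to a point. Both points, together with $L>0$, rely on the compactness of $M$ through the existence of a uniform injectivity radius (equivalently, a Lebesgue number for a convex cover). Once a minimiser is in hand, that it is a geodesic is the soft corner-rounding first-variation argument sketched above.
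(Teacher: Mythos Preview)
The paper does not supply its own proof of this statement; it simply quotes Cartan's 1928 result and moves on to the definition of the systole. Your argument is the classical one --- the finite-dimensional reduction to $N$-broken geodesics with vertices in the compact space $M^N$, followed by a corner-rounding first-variation argument --- and it is correct as written. One small point worth tightening: when you pass to the limit of vertex tuples, you should note that the closed subset $\{(p_1,\dots,p_N)\in M^N: d_g(p_i,p_{i+1})\leq\varepsilon\text{ for all }i\}$ is where your length function is actually defined and continuous, and that your minimizing sequence stays in it; choosing $\varepsilon$ strictly smaller than the convexity radius ensures the unique minimizing geodesic between consecutive limit vertices still exists. With that caveat, the proof is complete.
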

\begin{dfn}
We call systole of $g\in\mathcal R(M)$ a non-contractible loop of minimal length and we denote by $\mathrm{sys}(g)$ its length.
\end{dfn}
\begin{ex}
Show that a systole has no self-intersection.	
\end{ex}
\begin{ex}\label{ex:closed}
Let $S\subset M$ be the set of points $p$ such that there exists a systole passing through $p$. Show that $S$ is a closed subset of $M$. Hint: use that systoles are geodesics and the continuous dependence of solutions of ODE from the initial data.
\end{ex}
What can be said about $\mathrm{sys}(g)$? From the pictures of tori, we see that we can have a big torus with a short systole. From the pictures is however not clear if we can have a "small" torus with a long systole. Here we have to quantify what do we mean by a "big" or "small" manifold.  For example, one could use the diameter $\mathrm{diam}_g(M)$, namely the maximum distance between two points on $M$, to measure the size of a manifold. In this case, one has $\mathrm{sys}(g)\leq 2\mathrm{diam}_g(M)$, so that long systoles exist only on big manifolds.
\begin{ex}
Show the inequality	$\mathrm{sys}(g)\leq 2\mathrm{diam}_g(M)$. Hint: Let $s:=\mathrm{sys}(g)$ and let $\gamma:[0,s]\to M$ with $\gamma(0)=\gamma(s)$ be a systole parametrized with unit speed. Consider a length minimizing curve $\delta$ from $\gamma(0)$ to $\gamma(s/2)$.
\end{ex}
Another natural way to measure the size of a manifold is through its total volume $\vol_{\!g}(M)$ with respect to $g$.  This brings us to ask if the so-called \textit{systolic inequality} holds.
\begin{que}
Let $M$ be not simply connected. Does there exist a positive constant $C$ depending on $M$ such that 
\[
\sigma(g):=\frac{\mathrm{sys}(g)^m}{\vol_{\!g}(M)}\leq C,\qquad\forall\,g\in\mathcal R(M),
\]
where $\sigma(g)$ is the so-called systolic ratio. If $C$ exists, what is its optimal value? Are there metrics maximizing $\sigma$? How do they look like?
\end{que}
\begin{ex}\label{ex:all}
Let $g\in\mathcal R(M)$ be a metric maximizing $\sigma$. Show that there is a systole through every point of $M$. Hint: Let $S$ be the set desciribed in Exercise \ref{ex:closed}. If $M\setminus S$ is non-empty, then we can construct a metric $g_1=fg$, where $f$ is a function supported in $M\setminus S$ with smaller volume and the same systolic length as $g$.
\end{ex}
\subsection{Systolic inequality on surfaces} The first systolic result is the complete answer to the above question on $T^2$. It was proved by Charles Loewner during his lectures on Riemannian Geometry at the University of Syracuse in 1949 \cite{Kat07, Pu52}. Loewner never published this result which appears for the first time in written form in the doctoral thesis of Pao Ming Pu.
\begin{thm}[Loewner 1949 \cite{Pu52}]\label{t:loewner}
There holds
\[
\sigma(g)\leq \frac{2}{\sqrt{3}},\qquad\forall\,g\in\mathcal R(T^2). 
\]
Equality holds if and only if, up to homothety and isometry, $g$ is the flat metric corresponding to the parallelogram with equal sides making an angle of $60$ degrees.
\end{thm}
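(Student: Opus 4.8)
\emph{Plan.} The idea is to reduce the general case to \emph{flat} tori by a conformal averaging trick, and then to turn the remaining problem into the elementary optimization on the domain $\Gamma$ already set up in Theorem~\ref{t:flat}. First I would invoke the Uniformization Theorem to write $g=f^2g_0$ for a smooth positive function $f$ and a metric $g_0$ of constant curvature on $T^2$; by Gauss--Bonnet such a $g_0$ is flat. Since the systolic ratio $\sigma$ is scale- and isometry-invariant, it is enough to prove the two claims
\[
\sigma(g)\le\sigma(g_0)\qquad\text{and}\qquad\sigma(g_0)\le\tfrac{2}{\sqrt3}\ \text{ for every flat }g_0,
\]
keeping track of when equalities occur.

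For the first claim I would use the full family of shortest closed geodesics of $g_0$. Writing $(T^2,g_0)=\R^2/G$ with $G=\Z v_1\oplus\Z v_2$ and $v_1$ of minimal norm in $G\setminus 0$, the straight lines parallel to $v_1$ descend to a foliation $\{\gamma_s\}_{s\in[0,h]}$ of $T^2$ by closed $g_0$-geodesics, each of $g_0$-length \emph{exactly} $|v_1|=\mathrm{sys}(g_0)$ (minimality of $v_1$ rules out a shorter period), where $h=\vol_{\!g_0}(T^2)/\mathrm{sys}(g_0)$ is the width in the direction orthogonal to $v_1$. Each $\gamma_s$ is non-contractible, so $\mathrm{sys}(g)\le\ell_g(\gamma_s)=\int_{\gamma_s}f\,\dd s$ (with $\dd s$ the $g_0$-arclength), and integrating in $s$ — Fubini in the flat coordinates, where $[0,|v_1|]\x[0,h]$ is a fundamental domain — gives
\[
\mathrm{sys}(g)\,h\ \le\ \int_0^h\ell_g(\gamma_s)\,\dd s\ =\ \int_{T^2}f\,\dd\vol_{\!g_0}\ \le\ \sqrt{\vol_{\!g_0}(T^2)}\,\Big(\int_{T^2}f^2\,\dd\vol_{\!g_0}\Big)^{1/2}
\]
by Cauchy--Schwarz. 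Since $\dd\vol_{\!g}=f^2\,\dd\vol_{\!g_0}$ in dimension two, the last factor equals $\sqrt{\vol_{\!g}(T^2)}$; substituting the value of $h$ and squaring produces exactly $\mathrm{sys}(g)^2\le\sigma(g_0)\,\vol_{\!g}(T^2)$, i.e.\ $\sigma(g)\le\sigma(g_0)$.

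For the second claim I would put $g_0$ in the normal form of Theorem~\ref{t:flat}: the lattice generated by $v_1=(1,0)$ and $v_2=(x_0,y_0)$ with $(x_0,y_0)\in\Gamma$. Then $\mathrm{sys}(g_0)=1$ and $\vol_{\!g_0}(T^2)=y_0$, so $\sigma(g_0)=1/y_0$; and on $\Gamma$ one has $y_0\ge\sqrt{1-x_0^2}\ge\sqrt3/2$ because $x_0\le 1/2$. Hence $\sigma(g_0)\le 2/\sqrt3$, which together with the previous paragraph yields $\sigma(g)\le 2/\sqrt3$ for every $g\in\mathcal R(T^2)$. For the equality discussion: if $\sigma(g)=2/\sqrt3$ then both inequalities above are equalities; equality in Cauchy--Schwarz forces $f$ to be constant, so $g$ is homothetic to $g_0$, while $1/y_0=2/\sqrt3$ forces $y_0=\sqrt3/2$, which on $\Gamma$ occurs only at $x_0=1/2$ — the hexagonal lattice, $|v_1|=|v_2|$ with angle $60^\circ$. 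The converse (that this flat torus realizes $2/\sqrt3$) is the same computation $\sigma=1/y_0=2/\sqrt3$.

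The conceptual heart — and the step I expect to be the main obstacle — is the averaging in the second paragraph: the realization that one should replace the single, hard-to-control systole of $g$ by the entire \emph{flat} systolic foliation of the uniformizing metric and average over it. The technical points that need care there are (i) checking that \emph{every} leaf $\gamma_s$ has $g_0$-length equal to $\mathrm{sys}(g_0)$, so that averaging loses nothing, and (ii) the coarea-type identity $\int_0^h\ell_g(\gamma_s)\,\dd s=\int_{T^2}f\,\dd\vol_{\!g_0}$, which is just Fubini once the flat fundamental domain is chosen correctly. Everything else is the elementary estimate on $\Gamma$ and a routine equality analysis.
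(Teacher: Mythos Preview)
Your argument is correct and follows essentially the same approach as the paper: uniformize $g=f^2g_0$ with $g_0$ flat, average the systolic inequality over the foliation of $T^2$ by $g_0$-systoles, and then optimize over the domain $\Gamma$. The only cosmetic difference is the order of the two inequalities—the paper applies the AM--QM (Cauchy--Schwarz) inequality \emph{fiberwise} on each leaf and then takes the minimum over leaves, whereas you first take the minimum and then apply Cauchy--Schwarz \emph{globally} on $T^2$; both yield $\sigma(g)\le\sigma(g_0)$ with the same equality analysis.
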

\begin{proof}
Since the systolic ratio is invariant by isometries we can suppose that $g=f^2g_*$ for some $g_*$ flat obtained from a parallelogram as in Theorem \ref{t:flat}. Let us write $(x,y)=r(\cos\alpha,\sin\alpha)$ for some $r\geq 1$ and $\alpha\in[\pi/3,\pi/2]$. For each $t\in[0,1]$, consider the horizontal curve $s\mapsto\gamma_t(s):=(s+tx,ty)$, $s\in[0,1]$. Each $\gamma_t$ is a systole of $g_*$ with length $1$. Therefore, we get
\begin{align*}
\vol_{\!g}(T^2)&=\int_0^{1}\int_0^1f^2(s+tx,ty)r\sin\alpha\,\dd s\,\dd t\\
&=r\sin\alpha\int_0^{1}\Big(\int_0^1f^2(s+tx,ty)\dd s\Big)\dd t\\
&\geq \frac{1\cdot r\sin\alpha}{1^2}\int_0^{1}\Big(\int_0^1f(s+tx,ty)\dd s\Big)^2\dd t\qquad\qquad\text{\small (Arithmetic Mean-Quadratic Mean)}\\
&=\frac{\vol_{\!g_*}(T^2)}{\mathrm{sys}(g_*)^2}\int_0^{1}\ell_g(\gamma_t)^2\dd t\\
&\geq \frac{1}{\sigma(g_*)}\min_{t\in[0,1]}\ell_g(\gamma_t). \hspace{140pt}\text{\small (Minimum-Arithmetic Mean)}\\
&\geq \frac{1}{\sigma(g_*)}\mathrm{sys}(g)^2. \hspace{163pt}\text{\small (definition of $\mathrm{sys}(g)$)}
\end{align*}
The first inequality is an equality if and only if $f$ does not depend on $s$ and the second inequality is an equality if and only if $f$ is constant. Rearranging terms we find $\sigma(g)\leq \sigma(g_*)$ with equality if and only if $g$ is homothetic to $g_*$. Finally, since $\mathrm{sys}(g_*)=1$, the systolic ratio $\sigma(g_*)$ is maximum when $\vol_{\!g_*}(T^2)$ is minimum, which happens if and only if $r=1$ and $\alpha=\pi/3$.  
\end{proof}
\begin{rmk}[Important]\label{r:important}
	Summing up, the sharp systolic for $T^2$ relies on the following two steps:
	\begin{itemize}
		\item Write $g$ in a normal form (uniformization theorem);
		\item Use the inequality between the minimum, the arithmetic mean and the quadratic mean on a set of loops which foliate the space.
	\end{itemize}
	We will see in Section \ref{ss:final} of Lecture 2 that the local systolic inequality for contact manifolds follows a similar scheme.
\end{rmk} 
In his doctoral thesis, Pu pushed Loewner's ideas further proving the sharp systolic inequality for the real projective plane.
\begin{thm}[Pu 1952 \cite{Pu52}]
There holds
\[
\sigma(g)\leq \frac{\pi}{2},\qquad\forall\,g\in\mathcal R(P^2). 
\]Equality holds if and only if $g$ is a metric of constant curvature.
\end{thm}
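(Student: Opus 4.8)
The plan is to replay, essentially verbatim, the three-step scheme behind Loewner's theorem (Remark \ref{r:important}): put $g$ in a normal form via uniformization, compare the lengths of a distinguished family of non-contractible loops with the systole, and then chain together the inequalities between minimum, arithmetic mean and quadratic mean. The only genuinely new feature is that the distinguished family of loops will no longer \emph{foliate} the surface, so the "arithmetic mean" step must be carried out by an integral-geometric (Crofton-type) averaging rather than by a plain Fubini over a foliation. Concretely, since $\sigma$ is invariant under isometries and homotheties, by the uniformization theorem I may assume $g=f^{2}g_{*}$ with $g_{*}$ the round metric of constant curvature $1$ on $P^{2}$ and $f$ a smooth positive function. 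I then pass to the double cover $S^{2}\to P^{2}$: the metric $g$ lifts to $\widehat g=\widehat f^{\,2}g_{0}$ on the unit round sphere $(S^{2},g_{0})$ (area $4\pi$), with $\widehat f$ positive (and antipodally symmetric, though this is not needed). One has $\vol_{\!\widehat g}(S^{2})=2\vol_{\!g}(P^{2})$, and $\mathrm{sys}(g)$ equals the infimum of $\ell_{\widehat g}(\gamma)$ over all paths $\gamma$ in $S^{2}$ joining some point $x$ to its antipode $-x$, since these are exactly the lifts of non-contractible loops of $P^{2}$.

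For the family of loops I would take the great semicircles. For a unit vector $u\in S^{2}$ let $C_{u}\subset u^{\perp}$ be the corresponding great circle, parametrized by $g_{0}$-arclength as $c_{u}\colon\R/2\pi\Z\to S^{2}$. The two halves $c_{u}|_{[0,\pi]}$ and $c_{u}|_{[\pi,2\pi]}$ each run from a point to its antipode, so each has $\widehat g$-length at least $\mathrm{sys}(g)$, and therefore
\[
L(u):=\int_{0}^{2\pi}\widehat f\big(c_{u}(\theta)\big)\,\dd\theta\ \geq\ 2\,\mathrm{sys}(g),\qquad\forall\, u\in S^{2}.
\]
Integrating over $u\in S^{2}$ against the round area form $\om$ (mass $4\pi$) gives $\int_{S^{2}}L(u)\,\dd\om(u)\geq 8\pi\,\mathrm{sys}(g)$; this is the "minimum $\leq$ arithmetic mean" step.

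Next comes the averaging identity together with Cauchy--Schwarz. The map $(u,\theta)\mapsto c_{u}(\theta)$ is $SO(3)$-equivariant, so the pushforward of the product measure $\dd\theta\,\dd\om(u)$ on $S^{2}\times(\R/2\pi\Z)$ is a finite $SO(3)$-invariant measure on $S^{2}$, hence a constant multiple of $\om$; comparing total masses ($2\pi\cdot4\pi$ against $4\pi$) the constant is $2\pi$. Thus $\int_{S^{2}}L(u)\,\dd\om(u)=2\pi\int_{S^{2}}\widehat f\,\dd\om$, so $\int_{S^{2}}\widehat f\,\dd\om\geq 4\,\mathrm{sys}(g)$. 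Now the "arithmetic mean $\leq$ quadratic mean" step:
\[
\Big(\int_{S^{2}}\widehat f\,\dd\om\Big)^{2}\leq\Big(\int_{S^{2}}\widehat f^{\,2}\,\dd\om\Big)\Big(\int_{S^{2}}\dd\om\Big)=4\pi\,\vol_{\!\widehat g}(S^{2})=8\pi\,\vol_{\!g}(P^{2}),
\]
and combining the last two displays yields $16\,\mathrm{sys}(g)^{2}\leq 8\pi\,\vol_{\!g}(P^{2})$, i.e. $\sigma(g)\leq \pi/2$. For the equality case: equality forces equality in Cauchy--Schwarz, hence $\widehat f$ constant, so $\widehat g$ and $g$ have constant curvature; conversely for the round metric a direct check gives $\mathrm{sys}=\pi$, $\vol=2\pi$, $\sigma=\pi/2$.

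I expect the main obstacle to be the integral-geometric identity in the third paragraph, that is, pinning down the "average $\widehat g$-length of a great circle" as a universal constant times $\int_{S^{2}}\widehat f\,\dd\om$ — this replaces the simple change of variables available in Loewner's proof and is the only place where one has to argue that the incidence correspondence $(u,\theta)\mapsto c_u(\theta)$ produces no singular concentration (which is handled cleanly by $SO(3)$-invariance: a finite invariant measure on the homogeneous space $S^{2}$ must be a multiple of area). An alternative, avoiding the lift, is to work directly on $(P^{2},g_{*})$ with the family of projective lines and the corresponding Crofton formula, but the computation of the Crofton constant is the same point. Everything else is the by-now familiar chain $\min\leq\text{mean}\leq\text{quadratic mean}$ flagged in Remark \ref{r:important}.
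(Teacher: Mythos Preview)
Your proof is correct. The paper itself does not give a detailed proof of Pu's theorem; it only records a remark outlining two approaches. The first is Pu's original argument: after uniformization one averages the metric $f^{2}g_{*}$ over the isometry group $SO(3)$ of $g_{*}$, obtaining a metric with at least the same systolic ratio which, by transitivity of the action, must be a constant multiple of $g_{*}$. The second, attributed to \cite{KN20}, is precisely the scheme you carry out: use the full two-parameter family of $g_{*}$-geodesics (great circles) rather than a one-parameter foliation, average the length over this family via the $SO(3)$-invariant incidence measure, and then apply the Minimum--Arithmetic Mean--Quadratic Mean chain. Your Crofton-type identity $\int_{S^{2}}L(u)\,\dd\om(u)=2\pi\int_{S^{2}}\widehat f\,\dd\om$ is exactly the averaging step that replaces the simple Fubini in Loewner's proof, and your handling of the equality case (Cauchy--Schwarz forces $\widehat f$ constant) is clean. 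So your argument matches the second approach the paper points to; compared with Pu's original averaging-of-metrics route, your version is more elementary in that it never leaves the realm of scalar integrals and needs no Haar-measure construction, at the cost of the small integral-geometric lemma you correctly flag as the one nontrivial point.
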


\begin{rmk}
The proof of Pu's theorem also starts with the uniformization theorem and then constructs a metric with larger systolic ratio by averaging on the group of isometries of the metric $g_*$ of constant curvature on $P^2$. By the transitivity of this group, the averaged metric is a multiple of $g_*$ and, hence, we get the desired result. A beautiful, simple argument following more closely the proof of Loewner's theorem we presented above is given in \cite{KN20}, where also a formula for the remainder $\vol_{\!g}(P^2)-\tfrac{2}{\pi}\mathrm{sys}(g)^2$ in terms of the conformal factor is computed. There, the $g$-area is also obtained by averaging the energy of a certain family of loops, and then again the Minimum-Arithmetic Mean-Quadratic Mean inequality is used. However, instead of a one-parameter family of closed loops (on the flat torus, we took all horizontal geodesics), the two-parameter family consisting of all geodesics of the metric $g_*$ has to be used.  
\end{rmk}
Using a refinement of the argument of Loewner and Pu, the sharp systolic inequality for the Klein bottle can also be established. However, in this case the maximizing metric is singular.
\begin{thm}[Bavard 1986 \cite{Bav86}]
There holds
\[
\sigma(g)\leq \frac{\pi}{2\sqrt{2}},\qquad \forall\,g\in\mathcal R(K^2). 
\]
There is no maximizing smooth metric but there is a unique singular maximizing metric $g_*$ which is obtained as $C^0$-limit of smooth ones. The metric $g_*$ is constructed by gluing together two Möbius bands obtained as follows: Cut the region on the northern hemisphere of $S^2$ which is at distance $\pi/4$ from the half great circle perpendicular to the boundary of the hemisphere; glue the two sides lying on the boundary of the hemisphere via the antipodal map.
\end{thm}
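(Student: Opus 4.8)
\medskip
\noindent\textbf{Proof proposal.} The plan is to run the two-step machine of Remark~\ref{r:important} --- put $g$ in a conformal normal form, then bound $\vol_{\!g}(K^2)$ from below by averaging the squared length of a family of non-contractible loops foliating $K^2$ and chaining the Minimum--Arithmetic Mean--Quadratic Mean inequalities. Two features distinguish $K^2$ from $T^2$ and $P^2$: the Klein bottle carries several topologically distinct families of short loops, so a single foliation will not suffice, and the extremal is attained only in a degenerate limit, which is why it is singular.

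\smallskip
\noindent\emph{Step 1 (normal form).} By the uniformization theorem, $g$ is isometric to $f^2g_*$ with $g_*$ of constant curvature, hence flat since $\chi(K^2)=0$. Passing to the orientation double cover $\pi\colon T^2\to K^2$, with free orientation-reversing deck involution $\iota$, the lifted conformal class is $\iota$-invariant; since the flat representative of a conformal class on $T^2$ is unique up to scaling, $g_*$ and $f$ may be taken $\iota$-invariant and pushed down to $K^2$. Viewing $K^2$ as the mapping torus of $v\mapsto -v$ on a circle gives the model $[0,\tfrac12]_c\times(\R/\Z)_v$ with $(v,\tfrac12)\sim(-v,0)$ and $g_*=\dd c^2+\tau^2\,\dd v^2$ up to homothety, the single modulus being $\tau>0$. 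Here $\vol_{\!g_*}(K^2)=\tau/2$; the fibres $V_c=\{c\}\times(\R/\Z)$ are non-contractible closed geodesics of $g_*$-length $\tau$; the two exceptional sections $\{v=0\}$ and $\{v=\tfrac12\}$ are non-contractible closed geodesics of $g_*$-length $\tfrac12$; the remaining sections, wrapping the base circle twice, are non-contractible closed geodesics of $g_*$-length $1$; and $\mathrm{sys}(g_*)=\min(\tau,\tfrac12)$.

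\smallskip
\noindent\emph{Step 2 (averaging, and the refinement).} Feeding the fibre foliation $\{V_c\}$ into Loewner's argument gives, for $g=f^2g_*$,
\[
\vol_{\!g}(K^2)=\tau\int_0^{1/2}\Big(\int_{\R/\Z}f^2\,\dd v\Big)\dd c\ \ge\ \tfrac1\tau\int_0^{1/2}\ell_g(V_c)^2\,\dd c\ \ge\ \tfrac1{2\tau}\,\mathrm{sys}(g)^2,
\]
and the foliation by the sections that wrap the base twice gives symmetrically $\vol_{\!g}(K^2)\ge\tfrac\tau2\,\mathrm{sys}(g)^2$; hence $\sigma(g)\le2\min(\tau,1/\tau)\le2$. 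This is far from the target --- even $\sigma(g_*)\le1$ for every flat metric --- so a genuine \emph{refinement} is needed. One route is to cut $K^2$ along a shortest \emph{separating} simple closed curve $\gamma$ (so $\ell_g(\gamma)\ge\mathrm{sys}(g)$) into two M\"obius bands $M_1,M_2$; since the inclusions induce injections $\pi_1(M_i)\hookrightarrow\pi_1(K^2)$, one also has $\mathrm{sys}(M_i)\ge\mathrm{sys}(g)$, so it suffices to prove the sharp isosystolic inequality for the M\"obius band, $\vol(M)\ge c\,\mathrm{sys}(M)^2$ with optimal $c=\tfrac{\sqrt2}{\pi}$ and extremal of constant curvature $+1$: summing over $i$ then gives $\vol_{\!g}(K^2)\ge\tfrac{2\sqrt2}{\pi}\mathrm{sys}(g)^2$, i.e.\ $\sigma(g)\le\tfrac{\pi}{2\sqrt2}$. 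The M\"obius-band inequality is again a Loewner--Pu-type statement: uniformize $M$ to a conformal multiple of a flat band, average over a suitable family of arcs and loops, and invoke the spherical isoperimetric inequality --- the entry point for the round-sphere pieces of the extremal. Alternatively, one may imitate the slick proof of Pu's theorem directly on $K^2$, averaging the energy over the full two-parameter family of closed geodesics of $g_*$ with a weight concentrated on the classes that realize the systole.

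\smallskip
\noindent\emph{Step 3 (the extremal, and the main obstacle).} In an equality configuration the Quadratic--Arithmetic Mean steps force $f$ to be constant along each averaged geodesic; the families involved impose mutually incompatible such conditions, so no smooth positive $f$ can saturate the refined inequality and $\sup\sigma$ is only approached as $f$ degenerates. Tracking the limiting profile leads precisely to the singular metric $g_*$ of the statement, obtained by gluing two M\"obius bands each excised from the northern hemisphere of $S^2$ along a half great circle. Verifying that $\sigma(g_*)=\tfrac{\pi}{2\sqrt2}$, that $g_*$ is a $C^0$-limit of smooth metrics whose systolic ratios tend to this value, and that it is the \emph{unique} extremal --- in particular ruling out any smooth maximizer --- is the part that goes genuinely beyond Loewner and Pu, and is, I expect, the main obstacle.
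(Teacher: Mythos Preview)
The paper does not prove Bavard's theorem; it is stated without argument, preceded only by the sentence ``Using a refinement of the argument of Loewner and Pu, the sharp systolic inequality for the Klein bottle can also be established.'' So there is no detailed proof here to compare your proposal against, and your overall two-step scheme is consistent with that one-line hint.

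That said, the decisive lemma in your Step~2 is false as you state it. You invoke a M\"obius-band inequality
\[
\vol(M)\ \ge\ \frac{\sqrt{2}}{\pi}\,\mathrm{sys}(M)^2,
\]
but take the flat M\"obius band $[0,L]\times[-w/2,w/2]$ with identification $(0,t)\sim(L,-t)$: every non-contractible closed curve has length at least $L$ (the core realises this), the area is $Lw$, and hence $\vol(M)/\mathrm{sys}(M)^2=w/L$ can be made arbitrarily small. No universal positive lower bound on this ratio exists for M\"obius bands without further constraints.

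The cut-into-two-bands strategy can be made to work, but only via a sharper inequality that also controls the boundary. After cutting $K^2$ along a separating geodesic $\gamma$, both $\partial M_i=\gamma$ and the core of $M_i$ are non-contractible in $K^2$, so each has length at least $\mathrm{sys}(g)$; it is this \emph{pair} of lower bounds that forces $\vol(M_i)$ to be large. A correct version of the key lemma must therefore be a two-constraint inequality (systole and boundary length, or an equivalent conformal-modulus formulation), with equality for the spherical strip of half-width $\pi/4$. You have correctly located the obstacle --- and you even flag it as ``the main obstacle'' --- but the specific M\"obius-band statement you wrote down to resolve it is not true.
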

For higher genus surfaces $\Sigma_k$, the systolic inequality was first proved by Accola and Blatter in 1960 using again the uniformization theorem as main tool. They got a constant $C_k$ growing with the genus $k$. However, one would expect to get a constant decreasing with the genus since if we take $\mathrm{sys}(g)=1$, then we are attaching $k$ handles which have waists of length at least $1$ and cannot be too short. Thus, the volume should increase with the genus (don't take this argument too seriously). The systolic inequality with the sharp behaviour of $C_k$ in the genus $k$ is a deep result established by Gromov in the Eighties.
\begin{thm}[Gromov 1983 \cite{Gro83}]\label{t:hyp}
There exists $C>0$ such that for every $k\geq 2$ there holds
\[
\sigma(g)\leq \frac{(\log k)^2}{k}C,\qquad\forall\,g\in\mathcal R(\Sigma_k).
\]
The same statement holds true also on the non-orientable surface $\tilde \Sigma_k$ for $k\geq2$.
\end{thm}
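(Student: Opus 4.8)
First I would recast the statement. Since $\dim\Sigma_k=2$ we have $\sigma(g)=\mathrm{sys}(g)^2/\vol_{\!g}(\Sigma_k)$; as $\sigma$ is scale invariant we may normalise $\mathrm{sys}(g)=1$, and the inequality then reads
\[
\vol_{\!g}(\Sigma_k)\ \geq\ c\,\frac{k}{(\log k)^2},\qquad c>0\ \text{universal.}
\]
Writing $A:=\vol_{\!g}(\Sigma_k)$ and recalling $b_1(\Sigma_k)=2k$, this is equivalent --- after inverting the inequality and using that $k$ is at most a fixed power of $A$ in the relevant range --- to a purely \emph{topological} bound: a closed surface with $\mathrm{sys}(g)\geq 1$ satisfies $b_1\leq C\,A\,(\log A)^2$. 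So the content is: a surface that is large in area but everywhere at least unit thick cannot carry too many handles, and the point is to get the exact rate.

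I would prove the topological bound via Gromov's \emph{filling radius}. Every $\Sigma_k$ with $k\geq 1$ is aspherical, hence essential, so $\mathrm{sys}(g)\leq 6\,\mathrm{FillRad}(\Sigma_k,g)$, and it suffices to establish the refined estimate $\mathrm{FillRad}(\Sigma_k,g)^2\leq C\,\tfrac{(\log k)^2}{k}\,A$, improving the general inequality $\mathrm{FillRad}(X^n)^n\leq C_n\vol(X^n)$ by exploiting the large genus. The underlying mechanism --- which one can also run directly, without ever naming the filling radius --- is a covering argument: cover $\Sigma_k$ by metric balls of radius $<\tfrac12\mathrm{sys}(g)=\tfrac12$, so every loop in a ball is null-homotopic in $\Sigma_k$; pass to a good cover (this is delicate, since the balls need not be simply connected --- here the homological/filling formalism does the bookkeeping cleanly), so that the nerve lemma bounds $b_1(\Sigma_k)$ by the number of adjacencies of the cover. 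Hence one wants a cover by few balls with few adjacencies. The local input is that a ball of radius $\rho<\tfrac12$ ought to enclose area of order $\rho^2$; but this fails in the \emph{thin} parts of the surface, where the area concentrates at scales far below $\rho$, so the radii must be chosen adaptively --- each ball capturing a fixed proportion of the area --- and the balls sorted by dyadic scale. Only $\sim\log A$ scales turn out to matter, a bounded-overlap (Vitali--Besicovitch) argument bounds the number of balls and adjacencies at each fixed scale in terms of $A$, and summing over the scales, together with the cross-scale adjacencies, produces the two logarithms. (Alternatively, min--max/sweepout arguments on surfaces give the same bound rather directly.)

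The hard part is the thin-part analysis: one must show that a unit-systole surface of area $A$ has a controlled ``thickness profile'' --- only $O(\log A)$ relevant scales, with bounded complexity at each --- and then extract the sharp exponent $2$. One organising tool is the uniformization theorem: replace $g$ by the conformal hyperbolic metric $g_0$, whose area is the fixed number $4\pi(k-1)$ by Gauss--Bonnet, and use its thick--thin decomposition; by the Margulis lemma the thin part of $g_0$ consists of standard collars around short simple closed geodesics, which become null-homotopic once the $g$-systole is normalised --- so they carry no topology and may be collapsed --- while the handles all sit in the thick part, where the relevant scales are comparable, effectively reducing $\Sigma_k$ at the systole scale to a fattened graph whose complexity the area must pay for. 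Making this quantitative with exactly the right logarithmic loss is the delicate heart of the theorem, and is why it is deep. Finally, the bound is sharp: Buser and Sarnak built sequences of arithmetic hyperbolic surfaces whose systole is bounded below by a positive constant times $\log k$ in the genus $k$, and after rescaling to unit systole these have area $\asymp k/(\log k)^2$; in particular neither logarithm can be removed, a good sanity check that the multiscale bookkeeping above cannot be shortcut.
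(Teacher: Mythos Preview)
The paper does not give a proof of this theorem; it is simply stated as a deep result of Gromov with a reference to \cite{Gro83}, followed by two short exercises (deriving the non-orientable case from the orientable one, and showing that hyperbolic metrics are not maximizers). There is therefore no proof in the paper to compare your proposal against.

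For what it is worth, your outline does follow the architecture of Gromov's actual argument in \cite{Gro83}: the filling-radius inequality $\mathrm{sys}\leq 6\,\mathrm{FillRad}$ combined with a multiscale covering/regularity bound is indeed the mechanism, and the Buser--Sarnak arithmetic surfaces are the correct witnesses for sharpness. Two places in your sketch are loose, however. First, the reformulation as $b_1\leq C\,A\,(\log A)^2$ is not straightforwardly equivalent to $A\geq c\,k/(\log k)^2$: you need a case split (when $A$ dominates every fixed power of $k$ the original inequality is trivial, and otherwise $\log A\asymp\log k$), rather than the parenthetical you give. Second, the claim that the short $g_0$-geodesics in the hyperbolic thin part ``become null-homotopic once the $g$-systole is normalised'' is false --- those curves are essential for every metric on $\Sigma_k$ --- and the collar/Margulis input is used to control the \emph{geometry} of the thin part, not to trivialise its topology. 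Both points are exactly where you flag the argument as delicate, so you are aware the sketch is not yet a proof.
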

\begin{ex}
Deduce Gromov's theorem for $\tilde\Sigma_k$ from Gromov's theorem for $\Sigma_k$ with $k\geq2$.
\end{ex}
\begin{ex}
Show that a metric $g\in\Sigma_k$ with constant negative curvature cannot maximize the systolic ratio. Hint: From Riemannian geometry, there is only one periodic geodesic in each free-homotopy class since $g$ has negative curvature. Deduce that there are only finitely many systoles and conclude using Exercise \ref{ex:all}.
\end{ex}
\subsection{Systolic inequality in higher dimension}
Let us now briefly discuss the systolic inequality in dimension $m\geq3$. Gromov's result generalizes to hyperbolic manifolds, namely those manifolds such that there exists $g_*\in\mathcal R(M)$ with constant curvature $-1$. For these manifolds $V(M):=\vol_{\!g_*}(M)$ is a topological invariant thanks to Mostow rigidity theorem (for hyperbolic surfaces this is also a topological invariant by Gauss--Bonnet: $V(\Sigma_k)=2(k-1)$). Gromov's generalization of Theorem \ref{t:hyp} states that for every $m$ there is a constant $C$ such that for every hyperbolic manifold of dimension $M$:
\[
\sigma(g)\leq \frac{(\log V(M))^2}{V(M)}C,\qquad\forall\,g\in\mathcal R(M). 
\]
On the other hand, there are non-simply connected manifolds for which the systolic inequality does not hold. The simplest of which is $M=S^1\times S^2$ as one sees by taking a product metric on $M$ with suitably scaled factors. 
\begin{ex}
Justify the above assertion.
\end{ex}
From this example we learn that the systolic inequality is expected only for those manifolds having non-contractible loops in every direction (on $S^1\times S^2$ there is no such a loop along the $S^2$-direction). Intuitively, real projective spaces $P^m$, tori $T^m$ and, more generally, manifolds whose universal cover is $\R^m$ have this property. Gromov formalized this intuition via the following precise definition and proved the subsequent amazing result.
\begin{dfn}
A manifold $M$ is called essential if the map $H_m(M)\to H_m(K(\pi_1(M),1))$ is non-zero, where $K(\pi_1(M),1)$ is the corresponding Eilenberg-Maclane space.
\end{dfn}
\begin{thm}[Gromov 1983 \cite{Gro83}]
For every $m$, there exists a constant $C$ such that for all essential manifolds $M$ of dimension $m$, there holds
\[
\sigma(g)\leq C,\qquad \forall\,g\in\mathcal R(M).
\]	
\end{thm}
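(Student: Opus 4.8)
The plan is to follow Gromov's strategy from \cite{Gro83}, which does not compare $\mathrm{sys}(g)$ with $\vol_{\!g}(M)$ directly but routes the comparison through an auxiliary metric invariant, the \emph{filling radius} $\mathrm{FillRad}(M,g)$. First I would set this up: embed $(M,g)$ isometrically into the Banach space $L^\infty(M)$ of bounded functions via the Kuratowski map $\iota\colon p\mapsto d_g(p,\cdot)$, which is distance-preserving for the sup-norm by the triangle inequality; for $\epsilon>0$ let $U_\epsilon\subset L^\infty(M)$ be the open $\epsilon$-neighbourhood of $\iota(M)$; and define $\mathrm{FillRad}(M,g)$ to be the infimum of the $\epsilon>0$ for which the fundamental class of $M$ (with $\Z$- or $\Z/2$-coefficients according to orientability) is killed by $H_m(\iota(M))\to H_m(U_\epsilon)$. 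One checks that this is finite and scales like a length.

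The theorem then follows from two inequalities, which multiply to the statement:
\[
\mathrm{sys}(g)\ \leq\ 6\,\mathrm{FillRad}(M,g)\qquad\text{and}\qquad \mathrm{FillRad}(M,g)\ \leq\ C_m\,\vol_{\!g}(M)^{1/m},
\]
with $C_m$ depending only on $m=\dim M$; together they give $\sigma(g)=\mathrm{sys}(g)^m/\vol_{\!g}(M)\leq (6C_m)^m=:C$. The first inequality is where essentialness enters, and I would prove it by contradiction. If $\mathrm{FillRad}(M,g)<\mathrm{sys}(g)/6$, pick $\epsilon$ strictly between, so that $\iota_*[M]=0$ in $H_m(U_\epsilon)$, and cover $U_\epsilon$ by the open $L^\infty$-balls of radius $\epsilon$ about the points of $\iota(M)$, with nerve $\NN$. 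There is a canonical map $U_\epsilon\to\NN$, and I would build a map $\NN\to K(\pi_1(M),1)$ skeleton by skeleton: a vertex goes to $u(p_i)$, where $u$ is the classifying map; an edge to the $u$-image of a minimizing geodesic between the relevant points (overlapping $\epsilon$-balls force $g$-distance $<2\epsilon$); a $2$-simplex is filled using that a geodesic triangle of perimeter $<6\epsilon<\mathrm{sys}(g)$ bounds in $M$ and hence has null-homotopic $u$-image; higher simplices are filled automatically since $\pi_{\geq 2}K(\pi_1(M),1)=0$. The composite $M\xrightarrow{\iota}U_\epsilon\to\NN\to K(\pi_1(M),1)$ is homotopic to $u$, so $u_*[M]=0$ in $H_m(K(\pi_1(M),1))$, contradicting essentialness. (The hypothesis is indispensable: $S^1\times S^2$ violates the systolic inequality.)

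The hard part --- and the step I expect to be the main obstacle --- is the second inequality, Gromov's \emph{filling inequality} $\mathrm{FillRad}(M,g)\leq C_m\,\vol_{\!g}(M)^{1/m}$ with a constant depending only on the dimension. The idea is a Federer--Fleming style deformation performed inside $L^\infty(M)$, exploiting that this is an injective metric space, so Lipschitz maps defined on subsets extend without increasing their Lipschitz constant: one represents $[M]$ by a cycle near $\iota(M)$, covers a bounded region containing it by a fine cubical grid, and successively pushes the cycle down through the skeleta of the grid, at each stage estimating the measure of the image by an isoperimetric inequality, until one arrives at an explicit filling of ``radius'' $\lesssim_m\vol_{\!g}(M)^{1/m}$. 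Gromov's original execution of this is long and delicate; cleaner arguments, with better constants, were found later (e.g.\ by Wenger and by Guth; see also the survey \cite{Gut10}). Granting the filling inequality, combining the two displayed bounds completes the proof, with $C$ a function of $m$ alone --- and indeed everything besides the filling inequality is essentially formal once the filling radius has been introduced.
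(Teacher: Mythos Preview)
The paper does not give its own proof of this theorem: it is stated as a result of Gromov with a citation to \cite{Gro83} and then only commented upon. Your outline is a faithful sketch of Gromov's original argument in \cite{Gro83} --- the factorization through the filling radius, the inequality $\mathrm{sys}(g)\leq 6\,\mathrm{FillRad}(M,g)$ obtained by the nerve/classifying-map contradiction (which is exactly where essentialness is used), and the hard filling inequality $\mathrm{FillRad}(M,g)\leq C_m\vol_{\!g}(M)^{1/m}$ --- so there is nothing to compare: you are reproducing the strategy of the cited source, and your assessment of where the real difficulty lies is accurate.
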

This result is saying that the systolic inequality holds for every essential manifold with a constant depending only on the dimension. There is however no example in dimension $m\geq 3$, where the optimal constant of a given manifold is known. It is also absolutely remarkable that Gromov's result is sharp for orientable manifolds.
\begin{thm}[Babenko 1992 \cite{Bab92}]
If $M$ is orientable and satisfies the systolic inequality, then $M$ is essential.
\end{thm}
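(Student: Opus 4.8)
The plan is to prove the contrapositive: if $M$ is orientable and \emph{not} essential, then $\sup_{g\in\mathcal R(M)}\sigma(g)=+\infty$, so no constant $C$ can work. (We assume $M$ is not simply connected, otherwise the systole is not even defined; note that a simply connected manifold is anyway non-essential, so nothing is lost.) Concretely, I want to manufacture, for every $\varepsilon>0$, a metric $g_\varepsilon$ with $\mathrm{sys}(g_\varepsilon)=1$ and $\vol_{\!g_\varepsilon}(M)<\varepsilon$ --- a family of ``systolically free'' metrics, the exact opposite of what Gromov's theorem provides for essential manifolds (so that, together, the two results give an ``if and only if'' in the orientable case).

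\emph{Reduction to a finite classifying complex.} The systole only sees $\pi:=\pi_1(M)$: a loop counts iff it is non-contractible in $M$, equivalently non-trivial in $\pi$, equivalently non-contractible in $K(\pi,1)$. Let $c\colon M\to K(\pi,1)$ be the classifying map, which is a $\pi_1$-isomorphism. Since $M$ is compact it is a finite complex, so $c$ is homotopic to a map into a finite subcomplex; since $M$ is \emph{not} essential we have $c_*[M]=0$ in $H_m(K(\pi,1);\Z)$ (here orientability is used, to have the integral fundamental class $[M]\in H_m(M;\Z)$), and as homology commutes with the direct limit over finite subcomplexes we may fix a \emph{finite} subcomplex $X\subseteq K(\pi,1)$ and a map $f\colon M\to X$ with $f_*[M]=0$ in $H_m(X;\Z)$ such that the composite $M\xrightarrow{f}X\into K(\pi,1)$ is homotopic to $c$. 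In particular $f$ is injective on $\pi_1$, so a non-contractible loop in $M$ is sent by $f$ to a non-contractible loop in $X$. Fix once and for all a piecewise-Riemannian metric on $X$; since $X$ is a finite complex with $\pi_1(X)\supseteq\pi\neq 1$, its systole $s_0:=\mathrm{sys}(X)$ is a strictly positive constant.

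\emph{The core claim and how it finishes the proof.} I claim that $f_*[M]=0$ makes it possible to build, for every $\varepsilon>0$, a (smooth, after a routine smoothing) metric $g_\varepsilon$ on $M$ for which the \emph{same} map $f\colon(M,g_\varepsilon)\to X$ is $1$-Lipschitz while $\vol_{\!g_\varepsilon}(M)<\varepsilon$. Granting this, a non-contractible loop $\gamma$ in $(M,g_\varepsilon)$ is sent by the $1$-Lipschitz $f$ to a non-contractible loop in $X$ of length $\le\ell_{g_\varepsilon}(\gamma)$, hence $\ell_{g_\varepsilon}(\gamma)\ge s_0$, i.e.\ $\mathrm{sys}(g_\varepsilon)\ge s_0$. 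Writing $\lambda:=\mathrm{sys}(g_\varepsilon)\ge s_0$, the rescaled metric $\tilde g_\varepsilon:=\lambda^{-2}g_\varepsilon$ has $\mathrm{sys}(\tilde g_\varepsilon)=1$ and $\vol_{\!\tilde g_\varepsilon}(M)=\lambda^{-m}\vol_{\!g_\varepsilon}(M)\le s_0^{-m}\varepsilon$, so $\sigma(\tilde g_\varepsilon)>s_0^{m}/\varepsilon\to\infty$ as $\varepsilon\to0$, which is the contrapositive. To build $g_\varepsilon$: triangulate $M$ finely and make $f$ simplicial; then $f_*[M]=0$ says the pushed-forward fundamental cycle is the boundary of a simplicial $(m+1)$-chain $w$ in $X$. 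One uses $w$ to ``fill'' $f$ and then inflates/collapses in the Gromov--Babenko manner --- realizing the null-homology by an arbitrarily thin ``geometric cycle'' over $X$ --- transporting to $M$ a metric that is collapsed in exactly the directions $f$ ultimately has to destroy, so that $\vol_{\!g_\varepsilon}(M)\to0$ while the $1$-Lipschitz control on $f$ (hence the lower bound $s_0$ on the systole) is retained. Informally: a null-homologous fundamental class carries no volume.

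\emph{The main obstacle} is precisely this last construction: converting the purely topological equation $f_*[M]=0$ into a quantitative family of collapsing metrics on $M$ with a uniform Lipschitz bound to $X$. Three points are delicate. First, integral homology classes need not be represented by manifolds, so the filling $w$ is a chain, not a null-bordism $\partial W=M$; one must work instead with pseudomanifolds, subdivisions and resolutions of chains --- this is the technical heart of Babenko's argument. Second, $\pi_1$ must be controlled all the way through, so that an essential loop of $M$ cannot be accidentally contracted by the collapse; this is exactly what the factorization $M\xrightarrow{f}X\into K(\pi,1)$ guarantees. Third, the collapse must be genuinely quantitative, spreading a total volume $<\varepsilon$ over a bounded number of simplices while never stretching $f$. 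Orientability enters both in having an integral fundamental class to be killed and in the surgery/handle manipulations performed on $M$ and on the filling.
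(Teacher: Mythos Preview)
The paper does not contain a proof of this theorem; it is stated as a result of Babenko with a citation to \cite{Bab92} and nothing more. So there is no ``paper's own proof'' to compare against.

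On its own merits, your outline is the right strategy and matches the standard approach: prove the contrapositive by showing that a non-essential orientable $M$ is systolically free. The reduction to a finite subcomplex $X\subset K(\pi,1)$ carrying $f_*[M]=0$, the use of $\pi_1$-injectivity of $f$ to bound the systole from below by a fixed $s_0>0$ via a $1$-Lipschitz map, and the rescaling step are all correct and cleanly presented.

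However, what you call the ``core claim'' is precisely the whole theorem, and you do not prove it. You yourself flag that converting $f_*[M]=0$ into a family of metrics $g_\varepsilon$ with $\vol_{g_\varepsilon}(M)<\varepsilon$ and $f$ still $1$-Lipschitz is ``the technical heart of Babenko's argument,'' and then only gesture at ``inflating/collapsing in the Gromov--Babenko manner.'' This is not a proof but a summary of what needs to be done. The genuine content --- building an extension of $f$ over a pseudomanifold filling, resolving the singularities, and pulling back a collapsed metric to $M$ while controlling both the Lipschitz constant and the volume --- is entirely absent. As written, your proposal is a correct and well-organized plan, but it stops exactly where the real work begins.
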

\subsection{Systoles on simply connected manifolds?}
The notion of systole makes sense only for metrics on non-simply connected manifolds $M$ since we are considering shortest non-contractible loops. Can one define a similar concept on simply connected manifolds? By Cartan's theorem, the systole is also the length of the shortest non-contractible \textit{geodesic}, thus we can give the following natural definition.
\begin{dfn}
We define the weak systole\footnote{This is not the standard terminology in Riemannian geometry but I will use it here to make it (weakly) consistent with the terminology in symplectic geometry.} of a metric $g\in\mathcal R(M)$ as a (non-constant) periodic geodesic for $g$, whose length is minimal \textit{among all (non-constant) periodic geodesics} for $g$. We denote by $\widetilde{\mathrm{sys}}(g)$ the length of a weak systole and define the weak systolic ratio
\[
\rho(g):=\frac{\widetilde{\mathrm{sys}}(g)^m}{\vol_{\!g}(M)},
\]
which clearly satisfies $\rho(g)\leq \sigma(g)$ for all $g\in\mathcal R(M)$.
\end{dfn}
Can one prove a (weak) systolic inequality for $\rho$ on a simply connected manifold $M$? The first thing we need to check is that $\widetilde{\mathrm{sys}}(g)$ is finite for all $g\in\mathcal R(M)$, which is equivalent to the existence of periodic geodesics for $g$, a natural generalization of Cartan's theorem.
\begin{thm}[Birkhoff 1917 \cite{Bir17} for $S^2$; Lusternik--Fet 1951 \cite{LF51} in general]
If $M$ is simply connected, then every $g\in\mathcal R(M)$ admits a non-constant periodic geodesic.
\end{thm}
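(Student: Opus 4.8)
The plan is to run the minimax (Lusternik--Schnirelmann) method on the free loop space. Let $\Lambda M:=H^{1}(S^{1},M)$ be the Hilbert manifold of loops of Sobolev class $H^{1}$, and let
\[
E(\gamma):=\tfrac12\int_{S^{1}}|\dot\gamma|_{g}^{2}\,\dd t
\]
be the energy functional. First I would recall the standard analytic package: $E$ is bounded below, of class $C^{1}$, its critical points are precisely the closed geodesics parametrised proportionally to arc length together with the constant loops (which form a submanifold $M\hookrightarrow\Lambda M$ on which $E\equiv0$), and — crucially — $E$ satisfies the Palais--Smale condition. From Palais--Smale one deduces the deformation lemma: if the interval $[a,b]$ contains no critical value of $E$, then the sublevel set $\Lambda^{b}:=\{E\le b\}$ deformation retracts into $\Lambda^{a}$ along the negative gradient flow of $E$.

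Next I would isolate the topological input. Since $M$ is a closed manifold of dimension $m\ge2$ it is not contractible (its top homology does not vanish), and being simply connected, the Hurewicz theorem produces a smallest integer $k\ge2$ with $\pi_{k}(M)\ne0$. The evaluation fibration $\Omega M\to\Lambda M\xrightarrow{\ \op{ev}\ }M$, $\op{ev}(\gamma)=\gamma(0)$, admits the section $M\hookrightarrow\Lambda M$ given by constant loops, so the long exact sequence of the fibration splits and
\[
\pi_{k-1}(\Lambda M)\cong\pi_{k-1}(M)\oplus\pi_{k-1}(\Omega M)=\pi_{k-1}(M)\oplus\pi_{k}(M),
\]
the first summand being exactly the image of the constant loops. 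I fix a non-zero element $[\phi]$ in the $\pi_{k}(M)$-summand, represented by a continuous map $\phi\colon S^{k-1}\to\Lambda M$. I would also record the elementary fact that for $\varepsilon>0$ small enough the sublevel set $\Lambda^{\varepsilon}$ deformation retracts onto the constant loops $M$: a loop with $E<\varepsilon$ has length $<\sqrt{2\varepsilon}$ by Cauchy--Schwarz, hence lies in a geodesically convex ball and can be canonically contracted to $\gamma(0)$, continuously in $\gamma$.

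Now define the minimax value
\[
c:=\inf_{\psi\simeq\phi}\ \max_{x\in S^{k-1}}E\big(\psi(x)\big),
\]
the infimum over all $\psi\colon S^{k-1}\to\Lambda M$ homotopic to $\phi$. Taking $\psi=\phi$ shows $c<\infty$. I claim $c\ge\varepsilon_{0}>0$ for some $\varepsilon_{0}$: if $\max_{x}E(\psi(x))<\varepsilon_{0}$ with $\varepsilon_{0}$ in the range above, then $\psi$ takes values in $\Lambda^{\varepsilon_{0}}$, which deformation retracts onto the constant loops; hence $[\phi]=[\psi]$ would lie in the image of $\pi_{k-1}(M)$, contradicting our choice of $[\phi]$. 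Finally, $c$ is a critical value: if not, the deformation lemma supplies $\delta>0$ and a homotopy carrying $\Lambda^{c+\delta}$ into $\Lambda^{c-\delta}$; applying it to a near-optimal $\psi$ (with $\max_{x}E(\psi(x))<c+\delta$) yields a map homotopic to $\phi$ with $\max_{x}E<c-\delta$, contradicting the definition of $c$. Therefore $E$ has a critical point of energy $c>0$, i.e.\ a non-constant closed geodesic.

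The main obstacle is the analytic foundation underlying the whole scheme: establishing that the energy functional on the $H^{1}$-loop space satisfies the Palais--Smale condition (and that its critical points are genuine smooth closed geodesics, via elliptic bootstrapping of the geodesic ODE). This is precisely the difficulty that Birkhoff avoided in $1917$ by replacing the negative gradient flow with his explicit piecewise-geodesic curve-shortening process and proving the analogous deformation statement by hand; either route works, but the convergence and regularity bookkeeping is where the real work lies. By contrast, the splitting of the loop-space fibration and the minimax argument itself are soft.
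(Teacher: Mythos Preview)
Your argument is correct and is in fact substantially more complete than what the paper offers: the paper states the theorem without proof and only appends a remark sketching Birkhoff's idea for $S^{2}$ (a one-parameter family of loops forming a degree-one map $S^{2}\to S^{2}$, shortened by the gradient flow of the length functional until some loop gets stuck at a closed geodesic). Both your write-up and the paper's sketch rest on the same minimax principle, but you carry it out in the modern Lusternik--Fet framework on the $H^{1}$-free loop space with the energy functional, using Palais--Smale and the splitting of the fibration $\Omega M\to\Lambda M\to M$ to produce a nontrivial class in $\pi_{k-1}(\Lambda M,M)$ for the smallest $k\ge2$ with $\pi_{k}(M)\neq0$. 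This buys you the general simply connected case at once and makes the deformation argument rigorous via standard Hilbert-manifold calculus; the paper's picture is the geometric $S^{2}$ prototype of exactly this scheme, with Birkhoff's curve-shortening standing in for the negative gradient flow. Your closing paragraph correctly identifies the only genuinely nontrivial ingredient (Palais--Smale and the regularity bootstrap), which the paper does not address at all.
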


\noindent\!\begin{minipage}{.5\textwidth}
\begin{rmk}
The periodic geodesic found in the theorem cannot be obtained by minimizing the length in the space of loops since the minimum is attained at constant loops. One needs instead to use a minimax argument. For $S^2$ this argument is due to Birkhoff and amounts to considering $1$-parameter families of loops starting and ending at constant loops with the property that the associated map $F:S^2\to S^2$ has degree $1$ as in the picture below. Then, one starts homotoping the map $F$ via the gradient flow of the length functional in order to decrease the length of all loops in the family simultaneously. Since $F$ is not homotopic to a constant at least one of the loops will stay with length bounded away from zero during the deformation and therefore will converge to a periodic geodesic with positive length. 
\end{rmk}
\end{minipage}
\begin{minipage}{.5\textwidth}
\centering\includegraphics[width=.5\textwidth]{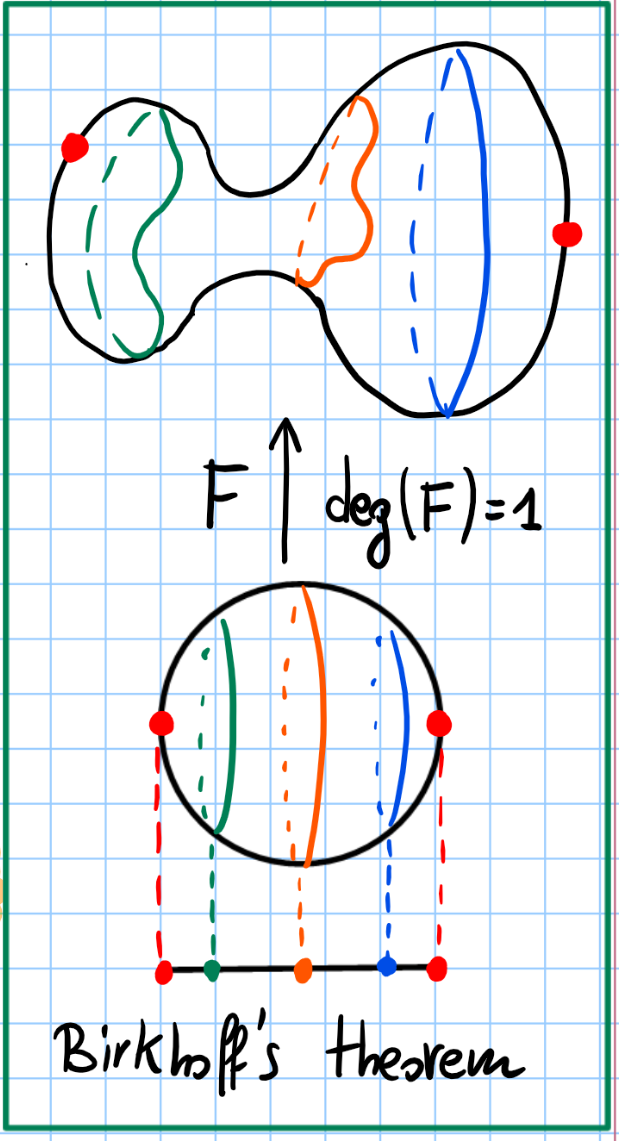}\captionof{figure}{Minimax on $S^2$}	
\end{minipage}
\medskip

There is, however, only one case where the weak systolic inequality on simply connected manifolds has been established, namely on $S^2$, the unique surface where the systolic inequality does not hold.
\begin{thm}[Croke 1988 \cite{Cro88}; Rotman 2006 \cite{Rot06}]
There exists $C>0$ such that
\[
\rho(g)\leq C,\qquad \forall\,g\in \mathcal R(S^2). 
\]
The optimal constant $C$ lies in the interval $[2\sqrt{3},2\sqrt{8})$.
\end{thm}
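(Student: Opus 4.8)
The plan is to bound $\widetilde{\mathrm{sys}}(g)$ from above by a more flexible quantity --- a min--max width over sweepouts --- and then to estimate that width in terms of the area. Recall from the discussion of the Birkhoff--Lusternik--Fet theorem that a nonconstant closed geodesic on $S^2$ is produced from a $1$-parameter family of loops $\{\gamma_t\}_{t\in[0,1]}$ with $\gamma_0,\gamma_1$ constant for which the associated map $F\colon S^2\to S^2$ has degree $1$ (call such a family a \emph{sweepout}): running Birkhoff's curve-shortening on it yields a nonconstant closed geodesic of length at most $\max_t\ell_g(\gamma_t)$, because the topological nontriviality of $F$ prevents every loop of the deformed family from collapsing. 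Since the length of \emph{any} nonconstant closed geodesic is $\geq\widetilde{\mathrm{sys}}(g)$, this gives
\[
\widetilde{\mathrm{sys}}(g)\ \leq\ W(g):=\inf_{\{\gamma_t\}}\ \max_{t\in[0,1]}\ \ell_g(\gamma_t),
\]
where the infimum runs over all sweepouts. So it suffices to prove $W(g)^2\leq C\,\vol_{\!g}(S^2)$ for a universal constant $C$.

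The heart of the matter is to construct a \emph{single} sweepout all of whose loops are short compared with $\sqrt{\vol_{\!g}(S^2)}$. A natural first attempt sweeps $S^2$ by the level sets $\{d_p=t\}$ of the distance function from a point $p$: for a.e.\ $t\in[0,\mathrm{diam}_g(S^2)]$ this is a finite union of circles, and the coarea inequality yields
\[
\int_0^{\mathrm{diam}_g(S^2)}\ell_g\big(\{d_p=t\}\big)\,\dd t\ \leq\ \vol_{\!g}(S^2),
\]
so that the \emph{average} level set is short. This is not yet enough: an individual level set can be arbitrarily long, and one must turn the slicing into honest loops. The remedy, which is where the real work lies, is an inductive ``chopping'' argument --- whenever a level curve is too long, cut the corresponding subregion of $S^2$ along a short geodesic arc into pieces of strictly smaller area and sweep each piece separately, keeping careful track of the total length created at each stage --- which ultimately controls $\max_t\ell_g(\gamma_t)$ and not just the average. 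This is the mechanism behind Croke's estimate and, with the constants optimized, behind the sharper bounds of Nabutovsky--Rotman, Sabourau and Rotman; pushing the optimization through gives $\rho(g)<2\sqrt{8}$. (Croke's original route replaces this step by the Lusternik--Schnirelmann theorem on the existence of three simple closed geodesics together with area estimates for the discs they bound, at the cost of a much larger constant.)

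For the lower bound $\sup_{g}\rho(g)\geq 2\sqrt{3}$ one exhibits a metric with large weak systolic ratio: the \emph{Calabi--Croke sphere}, obtained by doubling a flat equilateral triangle across its boundary. An elementary unfolding computation identifies its shortest closed geodesic and shows $\rho=2\sqrt{3}$ for this (singular) metric; rounding off the three cone points perturbs the area and the length of the shortest closed geodesic by an arbitrarily small amount, so the supremum of $\rho$ over smooth metrics is at least $2\sqrt{3}$.

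The reduction to the width and the construction of the example are the soft parts; the entire difficulty is concentrated in the middle step --- passing from the \emph{average} length bound handed to us by the coarea inequality to a bound on the \emph{maximum} length occurring in a genuine sweepout, and then making the bookkeeping efficient enough to reach a constant as small as $2\sqrt{8}$. Narrowing the remaining gap down to the conjectured optimum $2\sqrt{3}$ is open.
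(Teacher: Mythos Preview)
The paper does not prove this theorem: it is stated with citations to Croke and Rotman, and the surrounding text only records the conjectured optimality of the Calabi--Croke metric. So there is no ``paper's own proof'' to compare your proposal against.

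Your outline is faithful to the cited literature. The reduction $\widetilde{\mathrm{sys}}(g)\leq W(g)$ via Birkhoff curve-shortening on a degree-one sweepout, the coarea bound on the \emph{average} length of distance spheres, and the passage from average to maximum by an inductive cutting argument are precisely the ingredients in the Nabutovsky--Rotman/Sabourau/Rotman line of work; you also correctly note that Croke's original route goes instead through the Lusternik--Schnirelmann three-geodesics theorem. The lower-bound half via the doubled equilateral triangle, smoothed at the cone points, is the standard example and agrees with what the paper says immediately after the statement.

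What you have written is a roadmap rather than a proof, and you say so yourself: the substantive content --- the combinatorics of the chopping, the control on the total boundary length created at each stage, and why the induction terminates with the specific constant --- is not supplied. That is a genuine gap if this is meant to stand on its own, but as a description of where the argument lives and where the difficulty is concentrated it is accurate.
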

It is conjectured that the optimal constant is $2\sqrt{3}$ and it is achieved exactly by the so-called Calabi--Croke metric $g_{\Delta}$ which is obtained by gluing two equilateral triangles along the corresponding edges (what is the systole in this case?). This conjecture is compatible with the fact that smooth metrics close to $g_\Delta$ have smaller weak systolic ratio \cite{Bal10,Sab10}.

The theorem above tells us in particular that the round metric $g_*$ on $S^2$ does not maximize the weak systolic ratio since
\[
\rho(g_*)=\pi<2\sqrt{3}=\rho(g_{\Delta}).
\]
This should be surprising since we know from Pu's theorem that the round metric on $P^2$ is the unique maximizer for the systolic ratio on $\mathcal R(P^2)$. Is then $g_*$ at least a strict local maximizer for the systolic ratio on $\mathcal R(S^2)$? As we will see, the answer to this question is \textit{almost} affirmative and it will give us the opportunity to introduce another important class of Riemannian metrics.

\subsection{Zoll metrics}\label{s:Zoll}
The round metric on $S^2$ or $P^2$ has the property that all geodesics are periodic and with the same length. Let us give a name to metrics having this property on arbitrary manifolds.

\begin{dfn}
A metric $g$ on a manifold $M$ is said to be Zoll if all geodesics of $g$ are periodic and with the same length. Let us denote by $\mathcal Z(M)\subset\mathcal R(M)$ the (possibly empty) subset of Zoll metrics on $M$.
\end{dfn}
\begin{rmk}
One can equivalently define Zoll metrics by requiring that every geodesic on $M$ is a weak systole (why?).
\end{rmk}
The reason for the name of these particular metrics comes from Otto Zoll, who was the first to construct metrics with the above property on $S^2$ different from the round one \cite{Zol03}. More precisely, he gave an infinite dimensional family of pairwise non-isometric examples having the above property among metrics coming from spheres of revolutions in $\R^3$. In spherical coordinates $(\theta,\phi)$ they are exactly of the form 
\[
g=(1+h(\cos\theta))^2\dd\theta^2+(\sin\theta)^2\dd\phi^2,
\]
where $h:[-1,1]\to(-1,1)$ is any odd function with $h(-1)=0=h(1)$.

Still nowadays global properties of the space $\mathcal Z(S^2)$ remain quite mysterious and we do not even know if it is connected. However, we have the following remarkable result, which describes the local structure of $\mathcal Z(S^2)$ around $g_*$, up to the uniformization theorem.

\begin{thm}[Funk 1913 \cite{Fun13}; Guillemin 1976 \cite{Gui76}]
Let $s\mapsto f_s\colon S^2\to(0,\infty)$ be a smooth one-parameter family of functions with $s\in(-\epsilon,\epsilon)$ and $f_0=1$. If $f_s^2g_*$ is Zoll for every $s$, then $\phi:=\partial_s|_{s=0} f_s$ is an odd function, namely $\phi(-x)=-\phi(x)$ for every $x\in S^2$ (here $-x$ denotes the antipodal point of $x$ on $S^2$). Conversely, given an odd function $\phi:S^2\to\R$, there exists a one-parameter family $s\mapsto f_s$ such that $\phi=\partial_s|_{s=0} f_s$ and $f_s^2g_*$ is Zoll for every $s\in(-\epsilon,\epsilon)$.
\end{thm}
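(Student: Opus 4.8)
The plan is to prove the two implications separately. The ``only if'' direction is elementary --- it follows from the first variation of length together with the mapping properties of the \emph{Funk transform} $R$, which sends a function on $S^2$ to its integrals over the great circles. The ``if'' direction is the substantial one, due to Guillemin: it amounts to integrating a first-order deformation to a genuine curve of Zoll metrics, and that is where the real work lies. Throughout I work modulo rescaling, as in the rest of these notes, so that the family $f_s$ may be normalised, say so that $\int_{S^2}f_s\,\dd\vol_{\!g_*}$ is independent of $s$; with this convention ``$\phi$ is odd'' is the sharp statement, while without it one only obtains ``$\phi$ is odd up to an additive constant'' (the derivative of an infinitesimal homothety, which carries no information).

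For necessity I would argue as follows. The oriented great circles of $g_*$ are naturally indexed by points $\xi\in S^2$; parametrise $\gamma_\xi$ by $g_*$-arclength on $[0,2\pi]$. Since $g_s$ is Zoll, its oriented closed geodesics form a $2$-sphere, which for $s$ small is $C^1$-close to the $2$-sphere of great circles; hence there is a smooth family $s\mapsto\gamma_\xi^s$ of closed $g_s$-geodesics with $\gamma_\xi^0=\gamma_\xi$, and by the Zoll property $L(s):=\ell_{g_s}(\gamma_\xi^s)$ is independent of $\xi$. Differentiating at $s=0$, the variation of $\ell_{g_s}(\gamma_\xi^s)$ splits into a metric part and a curve part; the curve part vanishes because $\gamma_\xi$ is a closed $g_0$-geodesic and hence a critical point of $g_0$-length, while, using $\p_s|_{s=0}g_s=2\phi g_*$, the metric part equals
\[
L'(0)=\int_0^{2\pi}\phi\big(\gamma_\xi(t)\big)\,\dd t=:R\phi(\xi).
\]
As the left-hand side does not depend on $\xi$, we get $R\phi\equiv\mathrm{const}$. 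Now $R$ commutes with the $SO(3)$-action on $S^2$, so it acts by a scalar $\lambda_k$ on the space $\HH_k$ of spherical harmonics of degree $k$; since a great circle is invariant under the antipodal map while $\HH_k$ transforms under it by $(-1)^k$, one has $\lambda_k=0$ for $k$ odd and $\lambda_{2j}=2\pi P_{2j}(0)\neq0$ (with $P$ the Legendre polynomial). Hence $R\phi$ being constant forces the degree-$2j$ component of $\phi$ to vanish for every $j\geq1$, so $\phi$ is a constant plus an odd function --- and, with the normalisation above, $\phi$ is odd.

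For sufficiency, the plan is to show that \emph{every} odd $\phi$ is tangent to a genuine curve of Zoll metrics inside the conformal class $\{f^2g_*\}$. One rephrases ``$f^2g_*$ is Zoll'' as the vanishing of a nonlinear operator $\NN$ on conformal factors --- for instance, $\NN(f)$ measuring the failure of the suitably normalised geodesic flow of $f^2g_*$ on the cosphere bundle $S^*S^2\cong P^3$ to be periodic --- with $\NN(1)=0$. The necessity computation identifies the linearisation $D\NN(1)$ as $\phi\mapsto R\phi$ composed with the projection off the constants, whose kernel is exactly $\{\text{constants}\}\oplus\{\text{odd functions}\}$; sufficiency is then the statement that this first-order picture is unobstructed, i.e.\ that $\NN^{-1}(0)$ is near $g_*$ a manifold modelled on $\ker D\NN(1)$, from which one reads off the desired curves. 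The main obstacle --- and the reason a soft inverse function theorem does not suffice --- is that $R$ restricted to the even functions is \emph{not} boundedly invertible: its eigenvalues $\lambda_{2j}=2\pi P_{2j}(0)$ decay like $j^{-1/2}$, so $R^{-1}$ loses about half a derivative. One therefore needs a hard (Nash--Moser type) implicit function theorem together with tame estimates for $\NN$, and producing these is precisely where the microlocal structure of the geodesic flow of $g_*$ --- all orbits closed, the flow a free circle action on $S^*S^2$ --- is indispensable; this is the core of Guillemin's argument. Funk had earlier treated $S^2$ by more explicit means, and the same scheme, with the Funk transform replaced by the Radon transform over the closed geodesics, applies to an arbitrary Zoll surface.
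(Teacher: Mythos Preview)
The paper does not give a proof of this theorem; it is stated with attributions to Funk (1913) and Guillemin (1976) and left at that, so there is nothing in the paper to compare your argument against. Your outline is a faithful sketch of the arguments in those sources: the necessity direction via first variation of length and the spectral analysis of the Funk transform on spherical harmonics (eigenvalue $\lambda_k=0$ for $k$ odd, $\lambda_{2j}=2\pi P_{2j}(0)\neq0$) is Funk's contribution, while the sufficiency direction via a hard implicit function theorem---the linearisation being the Funk transform restricted to even functions, which loses roughly half a derivative since $|P_{2j}(0)|\sim j^{-1/2}$---is Guillemin's. Your remark that the statement as printed is only correct modulo rescaling (taking $f_s=1+s$ would otherwise give $\phi\equiv1$, which is even) is a valid observation; the paper's standing convention, announced early in the first talk, of identifying homothetic metrics is what makes the statement literally true, and your normalisation handles this correctly.
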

The round metric $g_*$ on $S^2$ descends to a Zoll metric on $P^2$ since the antipodal map is an isometry. However, none of the Zoll metrics in the theorem above descends to $P^2$ since a metric $fg_*$ is invariant under the antipodal map when the function $f$ is \textit{even}. Indeed, much more is true.
\begin{thm}[Green 1963 \cite{Gre63}]
The metric $g_*$ with constant curvature is the only Zoll metric on $P^2$: $\mathcal Z(P^2)=\{g_*\}$.
\end{thm}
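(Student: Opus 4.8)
\emph{Proof plan.} The plan is to pass to the orientation double cover $\pi\colon S^2\to P^2$, recognise the pulled-back metric as a \emph{Wiedersehensfl\"ache} on $S^2$ --- a metric in which, from any point $p$, all geodesics reconverge at one common point after a fixed length --- and then invoke the rigidity of such metrics (Green's solution of the Blaschke conjecture for the sphere). Set $\tilde g:=\pi^*g$; the non-trivial deck transformation $\tau$ is then a fixed-point-free isometric involution of $(S^2,\tilde g)$, and the geodesics of $\tilde g$ are exactly the lifts of those of $g$. Since all geodesics of $g$ are closed of a common length $\ell$, the time-$\ell$ map $\phi_\ell$ of the geodesic flow on the unit tangent bundle of $S^2$ sends each unit vector $v$ to one of the two points of $\pi^{-1}(\pi v)$, namely $v$ or $d\tau(v)$. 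The sets $\{\phi_\ell v=v\}$ and $\{\phi_\ell v=d\tau(v)\}$ are closed, disjoint (because $\tau$ has no fixed points) and cover the connected unit tangent bundle of $S^2$, so one of them is empty. If $\phi_\ell=\id$, then every closed geodesic of $g$ lifts to a loop in the simply connected $S^2$, hence is contractible; but Cartan's theorem supplies a non-contractible closed geodesic on $P^2$ --- a contradiction. Therefore $\phi_\ell=d\tau$: every $\tilde g$-geodesic is closed of length $2\ell$, and for each $p\in S^2$ all geodesics issuing from $p$ pass through $\tau p$ at time $\ell$. Thus $(S^2,\tilde g)$ is a Wiedersehensfl\"ache, each point having $\tau$-image as its Wiedersehenspunkt.

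The technical heart is the structural lemma: for every $p\in S^2$ the cut locus of $p$ reduces to the single point $\tau p$; equivalently, no geodesic from $p$ meets a conjugate point before $\tau p$, distinct geodesics from $p$ meet only at $p$ and $\tau p$, and $\exp_p$ restricts to a diffeomorphism of the open disc of radius $\ell$ in $T_pS^2$ onto $S^2\setminus\{\tau p\}$. I would prove this by a Morse-theoretic analysis of the energy functional on the space of loops based at $p$: the Wiedersehen property forces every geodesic loop at $p$ to have length a multiple of $2\ell$, which rules out ``short'' cut or conjugate points. This is the step where the Wiedersehen hypothesis is genuinely used, beyond mere closedness of all geodesics.

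Granting the lemma, write $\tilde g$ in geodesic polar coordinates $(r,\theta)$ around $p$; the area element is $J_\theta(r)\,\dd r\,\dd\theta$, where $J_\theta$ solves $J_\theta''+K J_\theta=0$ with $J_\theta(0)=0$, $J_\theta'(0)=1$, is positive on $(0,\ell)$, and satisfies $J_\theta(\ell)=0$ because the geodesics refocus at $\tau p$. Then $\int_0^\ell K J_\theta\,\dd r=J_\theta'(0)-J_\theta'(\ell)=1-J_\theta'(\ell)$, so Gauss--Bonnet, $\int_{S^2}K=2\pi\chi(S^2)=4\pi$, shows that the average of $-J_\theta'(\ell)$ over $\theta$ equals $1$. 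Running the same computation from $\tau p$ (whose Wiedersehenspunkt is $p$, and whose direction circle is isometric to that of $p$ via $-d\tau$ composed with $\phi_\ell$) gives that the average of $1/(-J_\theta'(\ell))$ also equals $1$; by the arithmetic--harmonic mean inequality this forces $-J_\theta'(\ell)\equiv1$ along \emph{every} geodesic, so the area-Jacobi field of each geodesic agrees with that of the round metric of the same geodesic length at both endpoints (in fact a further Wronskian comparison with the model Jacobi field then shows every Jacobi field along every closed geodesic to be $\ell$-antiperiodic, i.e.\ each closed geodesic to have monodromy $-\id$ over length $\ell$). The last and most substantial step is to upgrade this to $K\equiv(\pi/\ell)^2$: one feeds the family $\{J_\theta\}$ into a sharp integral inequality of Berger--Kazdan type whose equality case is exactly the constant-curvature model --- this is the analytic core of Green's theorem. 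One concludes that $\tilde g$ has constant curvature, hence so does $g$ since $\pi$ is a local isometry and $\tilde g=\pi^*g$; conversely the round metric on $P^2$ is Zoll and is unique up to isometry and homothety, whence $\ZZ(P^2)=\{g_*\}$.

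The main obstacle is this package of rigidity inputs: the structural lemma that $\exp_p$ is a diffeomorphism away from the single cut point $\tau p$, and the sharp Jacobi-field (equivalently integral-geometric) inequality together with its equality analysis. By contrast the passage to the double cover, the Gauss--Bonnet bookkeeping and the descent back to $P^2$ are essentially formal --- the one genuinely new ingredient compared with the $S^2$ case being Cartan's theorem, which excludes the degenerate alternative $\phi_\ell=\id$ on the cover.
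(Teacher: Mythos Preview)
Your overall plan is correct and is exactly the route the paper describes: lift to the double cover $S^2$, show the pulled-back metric is a Wiedersehen metric, and invoke Green's rigidity result that the only such metric on $S^2$ is round. The paper treats this rigidity as a black box and leaves only the reduction ``Zoll on $P^2 \Rightarrow$ Wiedersehen on $S^2$'' as an exercise; your connectedness argument on the unit tangent bundle, using Cartan's theorem to exclude the alternative $\phi_\ell=\id$, is a clean way to carry out that step.

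One caveat about your sketch of Green's argument itself: the AM--HM step does not go through as written. The identification of direction circles $S^1_p\to S^1_{\tau p}$ via $\phi_\ell$ is \emph{not} measure-preserving; its Jacobian is precisely $|J_\theta'(\ell)|$ (this is exactly what the normal Jacobi field computes: the variation $\partial_\theta\dot\gamma_\theta(\ell)$ has length $|J_\theta'(\ell)|$). If you change variables with this Jacobian, the Gauss--Bonnet identity at $\tau p$ collapses to the tautology $2\pi=2\pi$ and yields no independent constraint, so you cannot conclude that the $d\theta$-average of $1/(-J_\theta'(\ell))$ equals $1$. Green's actual route to constant curvature uses a different sharp integral inequality for solutions of $J''+KJ=0$ vanishing at both endpoints. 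Since the paper does not enter these internals, this is not a discrepancy with the paper's proof, only a warning that this portion of your sketch needs repair before it stands on its own.

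It is also worth noting that the paper records a second, shorter argument that bypasses the Wiedersehen analysis entirely: lift $g\in\mathcal Z(P^2)$ to a Zoll metric on $S^2$, apply Weinstein's theorem to obtain $\rho(g)=\pi/2$, and combine the trivial bound $\rho(g)\le\sigma(g)$ with Pu's sharp inequality $\sigma(g)\le\pi/2$ (equality only at constant curvature) to force $g=g_*$.
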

\begin{rmk}
The original proof of the theorem is based on the characterization of \textrm{Wiedersehen} metrics on $S^2$. These are the metrics such that for every $x\in S^2$ there exists $\mathcal C(x)\in S^2$ such that all geodesics starting from $x$ have $\mathcal C(x)$ as first conjugate point. This roughly means that all geodesics starting from $x$ will converge again in the point $\mathcal C(x)$. Green proved that the only Wiedersehen metric on $S^2$ is the round metric and thus the theorem follows by showing that the lift of a Zoll metric on $P^2$ to $S^2$ is a Wiedersehen metric. For more about Wiedersehen metrics have a look at the notes by Ballmann \cite{Bal16} and at Chapter 5 in Besse's book \cite{Bes78}, where they are put in the context of Blaschke manifolds. 
\end{rmk}
\begin{ex}
Prove the statement above: The lift of a Zoll metric on $P^2$ to $S^2$ is a wiedersehen metric. Hint: Show that each periodic geodesic on $P^2$ lifts to a minimizing geodesic on $S^2$ connecting a pair of antipodal points.
\end{ex}
We have discussed Zoll metrics on $S^2$ and $P^2$. Are there Zoll metrics on $T^2$ or on the other surfaces? The answer is no. Indeed, if $\mathcal Z(M)\neq\varnothing$, then either $M$ is simply connected or $\pi_1(M)\cong\Z/2\Z$ \cite{MO13}.
\begin{ex}
Prove the last statement: If $g\in\mathcal Z(M)$ and $M$ is not simply connected, then $\pi_1(M)$ has only one non-trivial element. Hint: Fix $x\in M$ and let $\gamma$ be any systole passing through $x$. The homotopy class $a\in\pi_1(M,x)$ of $\gamma$ does not depend on the systole we chose (why?). Let now $b$ be any other non-trivial homotopy class in $\pi_1(M,x)$. By the Hopf-Rinow theorem applied to the universal cover of $M$, there exists a geodesic loop $\delta$ based at $x$ in the homotopy class $b$. Show that $\delta$ is an iteration of a systole $\gamma$ based at $x$ (Sub-hint: Take $\gamma$ tangent to $\delta$ and use that systoles do not have self-intersections). This proves that $b=a^k$ for some $k\geq1$. Thus, $\pi_1(M,x)$ is a finite (take $b=a^{-1}$), cyclic group generated by $a$. To finish the argument, show that $a=a^{-1}$. Hint: reverse the parametrization of $\gamma$.
\end{ex}
\begin{rmk}
In higher dimensions metrics of constant curvature on $S^m,P^m$ and the standard metrics on complex $P^d_{\C}$ and quaternionic $P^d_{\mathbb H}$ projective space and on the Cayley plane $P^2_{\mathrm{Ca}}$ are Zoll. It is conjectured that these are the only manifolds admitting Zoll metrics (it is known that the cohomology ring must be the same as one of these \cite{Bes78}). Non-trivial examples of Zoll metrics are only known on $S^m$ for every $m$. Using the same argument as for $P^2$, Green's theorem generalizes to higher dimension: The only Zoll metric on $P^m$ is the standard one for every $m$.
\end{rmk}
\subsection{Zoll metrics and the weak systolic ratio on the two-sphere} By the results of Zoll and Guillemin, there are many Zoll metrics on $S^2$. However, they all have the same weak systolic ratio.
\begin{thm}[Weinstein 1974 \cite{Wei74}]\label{t:wei}
If $g\in\mathcal Z(S^2)$, then $\rho(g)=\pi$.
\end{thm}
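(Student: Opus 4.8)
The plan is to express the weak systolic ratio of a Zoll metric $g$ on $S^2$ entirely in terms of the common length $\ell$ of its geodesics, and to prove that $\vol_{\!g}(S^2)=\ell^2/\pi$. The first, easy observation is that $\widetilde{\mathrm{sys}}(g)=\ell$: every geodesic is a closed curve of length $\ell$, so $\widetilde{\mathrm{sys}}(g)\le\ell$, while conversely a periodic geodesic of length $<\ell$ would, after discarding multiple coverings, be a primitive closed geodesic shorter than $\ell$, contradicting the Zoll condition. Hence $\rho(g)=\ell^2/\vol_{\!g}(S^2)$ and only the volume remains to be computed.

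To compute the volume I would work on the unit cotangent bundle $\Sigma:=S^*S^2$ (identified with the unit tangent bundle via $g$), carrying the canonical $1$-form $\alpha$ induced by the tautological $1$-form $\lambda$ on $T^*S^2$; the Reeb vector field $R$ of $\alpha$ generates the arclength-parametrized geodesic flow. Two standard facts enter. First, the Liouville-volume computation $\int_{D^*S^2}\tfrac12(d\lambda)^2=\pi\,\vol_{\!g}(S^2)$ (each disc fibre of the unit disc bundle $D^*S^2$ has Euclidean area $\pi\sqrt{\det g}$), together with Stokes' theorem applied to $\tfrac12(d\lambda)^2=d(\tfrac12\lambda\wedge d\lambda)$, gives $\int_\Sigma\alpha\wedge d\alpha=2\pi\,\vol_{\!g}(S^2)$. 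Second, since $g$ is Zoll every Reeb orbit is closed with \emph{minimal} period $\ell$, so any point with non-trivial stabilizer would have an orbit of period $<\ell$; thus the Reeb flow defines a \emph{free} action of $S^1=\R/\ell\Z$ on $\Sigma$, and $\pi\colon\Sigma\to B:=\Sigma/S^1$ is a principal circle bundle over a closed surface, namely the space of oriented geodesics of $g$.

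Next, $d\alpha$ is invariant under the Reeb flow and is annihilated by $R$, hence basic: it descends to a closed $2$-form $\omega_B$ on $B$ with $\pi^*\omega_B=d\alpha$, and, as $d\alpha$ is non-degenerate on $\ker\alpha$ which projects isomorphically onto the tangent spaces of $B$, the form $\omega_B$ is symplectic and orients $B$. Integrating over the circle fibres, on which $\alpha$ integrates to $\ell$, gives $\int_\Sigma\alpha\wedge d\alpha=\int_\Sigma\alpha\wedge\pi^*\omega_B=\ell\int_B\omega_B$, whence $\vol_{\!g}(S^2)=\tfrac{\ell}{2\pi}\int_B\omega_B$. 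Finally, $\tfrac1\ell\alpha$ is a principal connection form for the $\R/\Z$-bundle $\Sigma\to B$ with curvature $\tfrac1\ell\omega_B$, so the Euler number $e$ of the bundle equals $\tfrac1\ell\int_B\omega_B$; on the other hand $\Sigma=S^*S^2\cong P^3$, and a principal circle bundle over a closed oriented surface of genus $k$ with non-zero Euler number $e$ has first homology $\Z^{2k}\oplus\Z/|e|$ by the Gysin sequence, which, compared with $H_1(P^3)=\Z/2\Z$, forces $k=0$ and $|e|=2$. Orienting $B$ so that $\int_B\omega_B>0$ gives $\int_B\omega_B=2\ell$, hence $\vol_{\!g}(S^2)=\tfrac{\ell}{2\pi}\cdot2\ell=\ell^2/\pi$ and $\rho(g)=\pi$.

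The bookkeeping — Stokes' theorem, fibre integration, the Gysin sequence — is routine; the real content, and the step I would be most careful about, is the geometric input that fixes the "shape" of the geodesic flow: that $B$ is a closed orientable surface and that $\Sigma\to B$ has Euler number of modulus $2$, equivalently that $S^*S^2\cong P^3$ and the Reeb/geodesic $S^1$-action is free. As a sanity check, on the round sphere of radius $r$ one has $\ell=2\pi r$ and $\vol_{\!g}(S^2)=4\pi r^2=\ell^2/\pi$, so $\rho=\pi$, as it must be.
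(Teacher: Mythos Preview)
Your proof is correct and follows essentially the same approach as the paper's derivation in the section on Zoll contact forms: the paper also quotients by the free Reeb action to obtain a principal $S^1$-bundle $\mathfrak p\colon S\to Q$, uses Fubini over the fibres to express the contact volume as the Euler number $\langle e,[Q]\rangle$, and determines the latter for $S^*S^2\cong P^3$ via the Gysin sequence (left there as an exercise), combining this with the identity $\vol_{\alpha_g}(S_g)=2\pi\,\vol_g(S^2)$. The only cosmetic difference is that you derive this last identity directly by Stokes on the disc bundle, whereas the paper quotes it as the $m=2$ case of the general formula $\vol_{\alpha_g}(S_g)=a_m\vol_g(M)$.
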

\begin{ex}
Using Weinstein's theorem show that $\rho(g)=\pi/2$ for every $g\in\mathcal Z(P^2)$ by lifting the metric to $S^2$. Combine this result with Pu's theorem to show Green's theorem above. 	
\end{ex}
This shows that the round metric $g_*$ cannot even be a strict local maximizer for $\rho$ since we can deform $g_*$ to a non-trivial Zoll metric. However, this is the only thing that can go wrong.
\begin{thm}[Abbondandolo--Bramham--Hryniewicz--Salom\~ao 2018 \cite{ABHS18}]\label{t:abhs18}
Let $g_0$ be a Zoll metric on $S^2$. If $g$ is a metric on $S^2$ sufficiently $C^3$-close to $g_0$, then
\[
\rho(g)\leq \pi\quad\text{and}\quad\Big(\rho(g)=\pi\iff g \textrm{ is Zoll}\,\Big).
\]
\end{thm}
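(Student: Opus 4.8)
The plan is to transport everything to the Reeb flow of the geodesic flow and then run the scheme of Remark~\ref{r:important}: a normal form, followed by the arithmetic-mean--quadratic-mean inequality over a foliation of the phase space by loops. \emph{From metrics to Reeb flows.} To $g\in\mathcal R(S^2)$ one associates its geodesic flow, viewed as the Reeb flow of a contact form $\alpha_g$ on the unit cotangent bundle $S^*S^2$, which is diffeomorphic to $P^3$ with its standard contact structure. Closed geodesics of $g$ are closed Reeb orbits of $\alpha_g$ with action equal to their length, and $\int_{S^*S^2}\alpha_g\wedge\dd\alpha_g$ is a universal constant times $\vol_g(S^2)$; hence $\rho(g)$ equals a universal constant times the contact systolic ratio $\rho_c(\alpha_g):=T_{\min}(\alpha_g)^2/\!\int\alpha_g\wedge\dd\alpha_g$, where $T_{\min}$ is the least period of a closed Reeb orbit, and $g$ is Zoll iff the Reeb flow of $\alpha_g$ is periodic. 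By Theorem~\ref{t:wei} all Zoll metrics give the same value $\rho_c(\alpha_{g_0})$, so it suffices to prove $\rho_c(\alpha)\le\rho_c(\alpha_0)$, with equality iff the Reeb flow of $\alpha$ is periodic, for every $\alpha$ that is $C^3$-close to $\alpha_0:=\alpha_{g_0}$ with $g_0$ Zoll; the $C^3$ hypothesis is exactly what yields $C^2$ control on the perturbed Reeb flow.

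\emph{Normal form and the volume estimate (the Loewner scheme).} For $g_0$ Zoll the $\alpha_0$-Reeb flow generates a free circle action with quotient $S^2$, so $\pi\colon P^3\to S^2$ is a principal $S^1$-bundle, $\alpha_0$ is (a multiple of) a connection form, and $\dd\alpha_0=\pi^*\omega_0$ for a symplectic form $\omega_0$ on $S^2$; set $T_0$ for the common period and $\mathcal A_0:=\int_{S^2}\omega_0$, so that $\rho_c(\alpha_0)=T_0^2/(T_0\mathcal A_0)=T_0/\mathcal A_0$. By Gray stability, after a diffeomorphism $C^2$-close to the identity (which changes neither $T_{\min}$ nor the contact volume) we may assume $\alpha=f\alpha_0$ with $f$ positive and $C^2$-close to $1$ --- the analogue of the conformal normal form $f^2 g_*$ in Loewner's proof. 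Then $\alpha\wedge\dd\alpha=f^2\,\alpha_0\wedge\pi^*\omega_0$, so, parametrising each fibre $\pi^{-1}(x)$ by the $\alpha_0$-Reeb flow (so that $\alpha_0=\dd t$, $t\in[0,T_0]$), Fubini for $\pi$ gives
\[
\int_{S^*S^2}\alpha\wedge\dd\alpha=\int_{S^2}\Big(\int_0^{T_0}\!f^2\,\dd t\Big)\omega_0\;\ge\;\frac1{T_0}\int_{S^2}\!A(x)^2\,\omega_0\;\ge\;\frac{\mathcal A_0}{T_0}\Big(\min_x A(x)\Big)^2=\frac{(\min_x A(x))^2}{\rho_c(\alpha_0)},
\]
where $A(x):=\int_{\pi^{-1}(x)}\alpha=\int_0^{T_0}\!f\,\dd t$; the first step is the arithmetic-mean--quadratic-mean inequality on each fibre and the second is minimum--arithmetic-mean. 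This is precisely the minimum--arithmetic-mean--quadratic-mean step of Remark~\ref{r:important}, here on the family of loops $\{\pi^{-1}(x)\}_{x\in S^2}$ foliating $P^3$.

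\emph{From the Zoll orbits to the systole, and equality.} To conclude $\rho_c(\alpha)\le\rho_c(\alpha_0)$ it remains to prove $\min_x A(x)\ge T_{\min}(\alpha)$, i.e. that the $\alpha$-length of every $\alpha_0$-geodesic bounds below the least period of a closed $\alpha$-Reeb orbit. The fibres $\pi^{-1}(x)$ all represent the non-trivial free homotopy class of $\pi_1(P^3)\cong\Z/2\Z$, so one has to produce a closed $\alpha$-Reeb orbit in this class whose action does not exceed $A(x_*)$, with $x_*$ a minimum of $A$ --- for instance by a min--max for the $\alpha$-action functional over loops in this class lying near the Zoll foliation, using that near $\alpha_0$ the action landscape is tightly controlled. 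Feeding $\min_x A(x)\ge T_{\min}(\alpha)$ into the display gives $\rho_c(\alpha)\le\rho_c(\alpha_0)$. If equality holds, both inequalities in the display are equalities: $f$ is constant on every fibre, hence $f=\pi^*h$; and $A\equiv\mathrm{const}$, hence $h\equiv\mathrm{const}$; so $\alpha$ is a rescaling of $\alpha_0$, its Reeb flow is periodic, and $g$ is Zoll. The converse implication is Theorem~\ref{t:wei}.

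\emph{Main obstacle.} The reduction, the Gray normal form and the mean inequalities are essentially formal; the real difficulty is the estimate $\min_x A(x)\ge T_{\min}(\alpha)$, the exact analogue of ``a non-contractible loop has length at least $\widetilde{\mathrm{sys}}(g)$'' in Loewner's argument. Since the loops $\pi^{-1}(x)$ are \emph{not} $\alpha$-Reeb orbits, this cannot be read off from the definitions: one must genuinely use the closeness to a Zoll flow to locate short closed $\alpha$-Reeb orbits in the prescribed free homotopy class carrying the sharp action bound, which requires real symplectic input (a min--max or holomorphic-curve / generating-function argument together with a fine analysis of nearly periodic Reeb flows). Secondary technical points are making the $\alpha=f\alpha_0$ normalization work under mere $C^3$-closeness, and the persistence, for metrics near the round one, of the free $S^1$-action that sets up the fibration $\pi$.
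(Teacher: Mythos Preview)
Your overall architecture---pass from $g$ to the contact form $\alpha_g$ on $S^*S^2$, put the perturbed form into a normal form relative to the Zoll form $\alpha_0$, and then run mean-value inequalities over the foliation by $\alpha_0$-orbits---is exactly the scheme the paper follows. The substantive difference is in \emph{which} normal form is used, and this difference is precisely what dissolves the obstacle you flag at the end.

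You invoke Gray stability to obtain $\Psi^*\alpha=f\alpha_0$ with $f$ an arbitrary positive function on $S^*S^2$. The paper instead establishes (citing \cite[Theorem~2]{AB19}) a much stronger normal form: after a suitable diffeomorphism, $\beta=\Psi^*\alpha=(\psi\circ\mathfrak p)\alpha_0+\delta$, where $\psi:Q\to(0,\infty)$ is a function on the \emph{quotient}, i.e.\ constant along $\alpha_0$-Reeb orbits, and $\delta$ is a controlled remainder (the sketch treats $\delta=0$). This is not Gray stability; producing it is the real analytic input and absorbs the $C^3$-closeness hypothesis.

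With $f=\psi\circ\mathfrak p$ constant on fibres, two things happen. First, a direct computation (equations \eqref{e:dbeta0}--\eqref{e:dbeta}) shows that the fibre $\mathfrak p^{-1}(q)$ is a genuine $\beta$-Reeb orbit if and only if $q$ is a critical point of $\psi$, with period $\psi(q)$; taking $q$ a minimum gives $T_{\min}(\beta)\le\min\psi$ \emph{immediately}---no minimax, no holomorphic curves. Second, your AM--QM step over each fibre becomes an equality, so the volume is simply $\int_Q\psi^2\kappa_0$ and only the min--arithmetic-mean inequality on $Q$ is needed. The paper stresses explicitly that, unlike Loewner's argument, AM--QM is \emph{not} used here.

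Thus your ``main obstacle'' $T_{\min}(\alpha)\le\min_xA(x)$ is exactly what the paper's stronger normal form renders trivial: the difficulty has been moved from a minimax existence result for Reeb orbits into the construction of the normal form. Your route is not unreasonable in spirit---it is closer to the original \cite{ABHS18}, which works with surfaces of section and return maps---but as written the minimax step is only a wish, and carrying it out rigorously is at least as hard as proving the paper's normal form.
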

Motivated by this result one can look for class of metrics on $S^2$ whose systolic ratio is bounded by $\pi$. Such a class is given by metrics which have a rotational symmetry as those coming from spheres of revolution in $\R^3$.
\begin{thm}[Abbondandolo--Bramham--Hryniewicz--Salom\~ao 2018 \cite{ABHS18c}]
If $g$ is a rotationally symmetric metric on $S^2$, then 
\[
\rho(g)\leq \pi\quad\text{and}\quad\Big(\rho(g)=\pi\iff g \textrm{ is Zoll}\,\Big).
\]
\end{thm}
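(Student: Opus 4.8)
The plan is to use the rotational symmetry to collapse $g$ to a one–variable problem through Clairaut's integral, following precisely the scheme of Remark~\ref{r:important}: the geodesics invariant under the circle action replace the foliating family of loops, and an elementary integral inequality — together with the fact that a meridian is a periodic geodesic — does the rest. First I would put $g$ in normal form. An effective isometric $S^1$–action on $S^2$ has exactly two fixed points (the poles), so $g=\dd r^2+f(r)^2\dd\phi^2$ on $[0,L]\times(\R/2\pi\Z)$, with $f$ smooth, positive on $(0,L)$, $f(0)=f(L)=0$, $f'(0)=1=-f'(L)$, all even derivatives vanishing at the endpoints. Here $L$ is the pole–to–pole distance, the meridians $\{\phi=\text{const}\}$ are periodic geodesics of length $2L$, $\vol_{\!g}(S^2)=2\pi\int_0^L f\,\dd r$, and I write $w:=\max f$.

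Next I would exploit integrability. For a unit–speed geodesic the angular momentum $c:=f^2\dot\phi$ is conserved and $\dot r^2=1-c^2/f^2$, so the geodesic lives over $\{f\ge|c|\}$; on a component $[a_c,b_c]$ of this set (with $f(a_c)=f(b_c)=|c|$) the coordinate $r$ oscillates back and forth, each full oscillation adding length $\Lambda(c):=2\int_{a_c}^{b_c}\tfrac{f\,\dd r}{\sqrt{f^2-c^2}}$ and turning $\phi$ by $\Theta(c):=2\int_{a_c}^{b_c}\tfrac{c\,\dd r}{f\sqrt{f^2-c^2}}$. The geodesic closes up exactly when the rotation number $\Theta(c)/2\pi$ is rational, and the periodic geodesics of $g$ are the meridians, the latitude circles $\{r=r_*\}$ over critical points $r_*$ of $f$ (length $2\pi f(r_*)$, in particular the equator of length $2\pi w$ and possible inner waists), and, when $\Theta(c)/2\pi=p/q$ in lowest terms, oscillating geodesics of length $q\Lambda(c)\ge\beta(c):=\tfrac{2\pi\Lambda(c)}{\Theta(c)}$. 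A Fubini/layer–cake computation then yields the two identities $\int_0^w\Lambda(c)\,\dd c=\tfrac12\vol_{\!g}(S^2)$ and $\int_0^w\Theta(c)\,\dd c=2L$ (for disconnected $\{f\ge c\}$ one sums over components, and both identities persist).

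The heart of the matter — and what I expect to be the main obstacle — is to show that the weak systole realizes the \emph{infimal length–to–rotation ratio of the Clairaut family}, i.e.\ $\widetilde{\mathrm{sys}}(g)=\min_{c\in[0,w]}\beta(c)$, where one reads $\beta(0):=2L$ for meridians and $\beta(f(r_*)):=2\pi f(r_*)$ for latitude circles. The inequality $\widetilde{\mathrm{sys}}(g)\ge\min_c\beta(c)$ is immediate, since every periodic geodesic has length $\ge\beta$ at its own angular momentum. For the reverse inequality one uses that $\beta$ is continuous on $[0,w]$ — with $\beta(c)\to2\pi f(r_*)$ as $c$ approaches an interior critical value of $f$ (there $\Lambda$ and $\Theta$ both diverge logarithmically near the separatrix, but their ratio stays bounded), $\beta(w)=2\pi w$, and $\beta(0^+)=2L$ from the universal behaviour $f(r)\sim r$ at the poles — so $\beta$ attains its minimum at some $c^*$; if $c^*$ is $0$, $w$, or a critical value of $f$, the corresponding meridian or latitude circle already has the right length, and otherwise $c^*$ is a regular value, and one must produce a closed geodesic of length $\beta(c^*)$ on the invariant annulus through $c^*$. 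This last point is the delicate step: it requires either a monotonicity property of $\beta$ between consecutive critical values of $f$ (reducing the minimum to a critical value, where an honest closed geodesic sits), or a Birkhoff–Poincaré–type existence argument in the exceptional case, plus the bookkeeping for disconnected level sets.

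Granting this, the proof concludes quickly. From $\widetilde{\mathrm{sys}}(g)\le\beta(c)$, i.e.\ $\widetilde{\mathrm{sys}}(g)\,\Theta(c)\le2\pi\Lambda(c)$ for all $c\in(0,w)$, integrating and inserting the two identities gives $\widetilde{\mathrm{sys}}(g)\cdot2L\le\pi\,\vol_{\!g}(S^2)$; combining with the meridian bound $\widetilde{\mathrm{sys}}(g)\le2L$ yields $\widetilde{\mathrm{sys}}(g)^2\le\widetilde{\mathrm{sys}}(g)\cdot2L\le\pi\,\vol_{\!g}(S^2)$, that is $\rho(g)\le\pi$. If $\rho(g)=\pi$ then both inequalities are equalities: $\widetilde{\mathrm{sys}}(g)=2L$ (a meridian is a weak systole) and $\beta(c)\equiv2L$, which forces $\Theta(c)\equiv2\pi$ and $\Lambda(c)\equiv2L$, so every geodesic is periodic of length $2L$ and $g$ is Zoll; conversely every Zoll metric on $S^2$ has $\rho(g)=\pi$ by Weinstein's Theorem~\ref{t:wei}. (All of this can also be phrased symplectically: the geodesic flow is the Reeb flow of the canonical contact form on $T^1S^2\cong\R P^3$, the circle symmetry allows a contact reduction to an area–preserving system on $S^2$, and the statement becomes a two–dimensional systolic–type inequality — the viewpoint developed in the second lecture.)
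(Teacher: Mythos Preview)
The paper does not prove this theorem; it only states the result and attributes it to \cite{ABHS18c}, so there is no argument in the text against which to compare your sketch.

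On the merits: your coarea identities $\int_0^w\Lambda\,\dd c=\tfrac12\vol_{\!g}(S^2)$ and $\int_0^w\Theta\,\dd c=2L$ are correct, and the closing chain of inequalities is clean \emph{provided} the key input holds. But that input is $\widetilde{\mathrm{sys}}(g)\leq\beta(c)$ for every $c$, i.e.\ $\widetilde{\mathrm{sys}}(g)\leq\min_c\beta(c)$, and this is in general false --- the direction you call immediate, $\widetilde{\mathrm{sys}}(g)\geq\min_c\beta(c)$, is the one that actually holds, and it points the wrong way for your integration step. Concretely, an oscillating closed geodesic at angular momentum $c$ has length $p\,\beta(c)$, where $p$ is the numerator of $\Theta(c)/2\pi$ in lowest terms; it realizes $\beta(c)$ only when $\Theta(c)\in\{2\pi,\pi,2\pi/3,\dots\}$. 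There is no reason an interior minimizer $c^*$ of $\beta$ should satisfy this constraint, and when it does not, every closed geodesic has length strictly greater than $\beta(c^*)$ (the oscillating ones because $p\geq1$ with $\beta(c)>\beta(c^*)$ or $p\geq2$, the meridian and equator because $\beta(0),\beta(w)>\beta(c^*)$), so $\widetilde{\mathrm{sys}}(g)>\min_c\beta(c)$ strictly and your integrand $\beta(c)-\widetilde{\mathrm{sys}}(g)$ changes sign. Neither proposed remedy closes this: $\beta$ need not be monotone between consecutive critical values of $f$, and a Poincar\'e--Birkhoff argument controls rotation numbers of periodic points, not their actions. (The equality case has a parallel gap: $\beta\equiv2L$ only gives $\pi\Lambda=L\Theta$, not $\Theta\equiv2\pi$, so a further argument is needed before concluding ``Zoll''.) The proof in \cite{ABHS18c} proceeds along different lines, through a global surface of section for the geodesic flow and a careful analysis of the first-return map rather than the pointwise inequality you are aiming for.
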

We remark, finally, that the local systolic inequality in a neighborhood of a Zoll metric is not only a two-dimensional phenomenon but holds for Zoll metrics in any dimension.
\begin{thm}[Abbondandolo--Benedetti 2019 \cite{AB19}]\label{t:riemzoll}
	Let $g_0$ be a Zoll metric on a manifold $M$. If $g$ is a metric on $M$ sufficiently $C^3$-close to $g_0$, then
	\[
	\rho(g)\leq \rho(g_0)\quad\text{and}\quad\Big(\rho(g)=\rho(g_0)\iff g \textrm{ is Zoll}\,\Big).
	\]
\end{thm}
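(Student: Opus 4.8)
\emph{Reduction to contact geometry.} The quantity $\rho$ is invariant under isometries and rescalings, so I would first rescale $g_0$ so that all its geodesics are closed of length $2\pi$. On $T^*M$ with tautological $1$-form $\lambda$ and $\omega=d\lambda$, the unit cosphere bundle $S^*_gM=\{|p|_g=1\}$ is a starshaped hypersurface and $\alpha_g:=\lambda|_{S^*_gM}$ is a contact form whose Reeb flow is the cogeodesic flow of $g$; closed $g$-geodesics of length $\ell$ are exactly closed $\alpha_g$-Reeb orbits of period $\ell$, so $\widetilde{\mathrm{sys}}(g)=\underline{T}(\alpha_g)$, the minimal period of a closed $\alpha_g$-Reeb orbit. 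Applying Stokes' theorem to the unit codisk bundle $D^*_gM$ (whose symplectic volume is $\mathrm{vol}(B^m)\,\vol_{\!g}(M)$, being fibrewise a Euclidean ball) gives $\int_{S^*_gM}\alpha_g\wedge(d\alpha_g)^{m-1}=m!\,\mathrm{vol}(B^m)\,\vol_{\!g}(M)$, so $\rho(g)$ is a universal constant times the \emph{contact systolic ratio} $\underline{T}(\alpha_g)^m/\int\alpha_g\wedge(d\alpha_g)^{m-1}$. Next, all the $S^*_gM$ carry the \emph{same} canonical cooriented contact structure $\xi$, and fibrewise radial projection onto $Y:=S^*_{g_0}M$ identifies them preserving $\xi$; transporting $\alpha_g$ to $Y$ yields a contact form with kernel $\xi=\ker\alpha_{g_0}$, hence of the form $f\,\beta_0$ with $\beta_0:=\alpha_{g_0}$ and $f>0$, and $f\to1$ in $C^3$ as $g\to g_0$ in $C^3$. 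Since all $df$-terms die after wedging with $\beta_0$, one has $\int_Y(f\beta_0)\wedge(d(f\beta_0))^{m-1}=\int_Y f^m\mu_0$ where $\mu_0:=\beta_0\wedge(d\beta_0)^{m-1}$. Thus the theorem becomes: for $f$ close to $1$ in $C^3$,
\[
\underline{T}(f\beta_0)^m \;\le\; \frac{(2\pi)^m}{\mathrm{Vol}_0}\int_Y f^m\,\mu_0, \qquad \mathrm{Vol}_0:=\textstyle\int_Y\mu_0,
\]
with equality iff the $f\beta_0$-Reeb flow is periodic — i.e. a Zoll contact form is a $C^3$-local maximizer of the contact systolic ratio, with the maximizers being exactly the Zoll forms.

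\emph{The core contact inequality.} This is where the real work lies, and it naturally splits into the two steps of Remark~\ref{r:important}. The min--AM--QM step: the closed $\beta_0$-Reeb orbits $\gamma_y$ foliate $Y$, and $2\pi f$ averaged over the orbit through $y$ is precisely the $f\beta_0$-action $\int_{\gamma_y}f\beta_0$; integrating over $Y$ and using the power-mean inequality gives $\frac1{\mathrm{Vol}_0}\int_Y f^m\mu_0\ge\big(\frac1{2\pi}\cdot(\text{average }f\beta_0\text{-action of the Zoll orbits})\big)^m$, with equality iff $f$ is constant. So it suffices to exhibit a closed $f\beta_0$-Reeb orbit whose period is at most the average $f\beta_0$-action of the Zoll orbits. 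For the normal-form step I would average the $1$-form itself, $\bar\beta:=\frac1{2\pi}\int_0^{2\pi}(\phi^{\beta_0}_t)^*(f\beta_0)\,dt=\bar f\,\beta_0$ with $\bar f$ \emph{basic} ($\bar f=\pi^*\hat f$ for $\hat f$ on the quotient orbifold $B=Y/S^1$): the Reeb flow of $\bar f\beta_0$ then splits as the $\beta_0$-rotation composed with the horizontal lift of the $\omega_B$-Hamiltonian flow of $\hat f$ on $B$, so the lift of a minimum point of $\hat f$ is a closed $\bar\beta$-Reeb orbit of period $2\pi\min_B\hat f\le$ (average $\bar\beta$-action of the Zoll orbits) $=$ (average $f\beta_0$-action of the Zoll orbits). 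One then compares $f\beta_0$ with $\bar\beta$ via an $S^1$-equivariant Moser-type normalization plus a second-variation estimate in the spirit of Álvarez Paiva--Balacheff: $\beta_0$ is a critical point of the contact systolic ratio with negative semidefinite Hessian whose kernel is exactly the tangent space to the family of Zoll forms, and the remaining task is to show the ratio strictly decreases transversally to that family while remaining constant along it — the latter because for any periodic Reeb form $\underline{T}(\beta)^m/\int_Y\beta\wedge(d\beta)^{m-1}$ equals the topological number $1/\langle e^{m-1},[B]\rangle$ ($e$ the Euler class of $Y\to B$), which is unchanged under perturbations within the Zoll forms (small perturbations of a free $S^1$-action are conjugate to it).

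\emph{Back to metrics and rigidity.} Transporting the core inequality back to $T^*M$ gives $\rho(g)\le\rho(g_0)$. If equality holds, equality in the power-mean step forces $f$ constant, hence $\alpha_g$ transported to $Y$ is a positive multiple of $\beta_0$, so $g$ has the same cogeodesic flow as $g_0$ up to reparametrization and is therefore Zoll. Conversely, a Zoll metric $g$ near $g_0$ has cogeodesic flow conjugate to that of $g_0$ (again by rigidity of free $S^1$-actions), hence the same Euler number, and the topological formula gives $\rho(g)=\rho(g_0)$; this yields the stated characterization of the equality case.

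\emph{Main obstacle.} The entire difficulty is concentrated in the second step above: passing from ``$\beta_0$ is a critical point of the contact systolic ratio with $\le 0$ Hessian'' (essentially a computation) to ``$\beta_0$ is a genuine $C^3$-local maximum with sharp rigidity.'' The weak systole $\underline{T}(\cdot)$ is only an upper semicontinuous minimum over closed orbits, and an arbitrarily $C^3$-small perturbation of a Zoll form may have an intricate Reeb dynamics with many short orbits, so one cannot simply differentiate; one must simultaneously bring $f\beta_0$ to an $S^1$-equivariant normal form reducing to the split, averaged situation, control the discrepancy between $f\beta_0$ and $\bar\beta$ to higher order, and absorb the infinite-dimensional degeneracy of the Hessian along the family of Zoll deformations using the topological constancy of the ratio on that family. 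Extracting a short closed $f\beta_0$-Reeb orbit with the sharp period bound — uniformly over a $C^3$-neighborhood — is the crux.
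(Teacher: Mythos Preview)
Your reduction to contact geometry is correct and exactly matches the paper: the Riemannian statement is deduced as an immediate corollary of the contact version (Theorem~\ref{t:final}) via the identifications $\widetilde{\mathrm{sys}}(g)=T_{\min}(\alpha_g)$ and $\vol_{\!\alpha_g}(S_g)=a_m\vol_{\!g}(M)$, together with the bijection between fibrewise starshaped hypersurfaces and contact forms in $\mathcal C(\xi_{S_{g_0}})$. Your volume computation $\int_Y(f\beta_0)\wedge(d(f\beta_0))^{m-1}=\int_Yf^m\mu_0$ is also correct.

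Where your sketch diverges from the paper is in the \emph{order} of the two steps inside the contact inequality. You first apply the power-mean inequality $\tfrac1{\mathrm{Vol}_0}\int f^m\mu_0\geq\big(\tfrac1{\mathrm{Vol}_0}\int f\mu_0\big)^m$ and then try to produce a closed $(f\beta_0)$-Reeb orbit of period at most the \emph{average} action $2\pi\cdot\mathrm{AM}(f)$. To do this you pass through the averaged form $\bar\beta=\bar f\beta_0$, for which the orbit over a minimum of $\hat f$ works; but now you must transfer that orbit back to $f\beta_0$ with the \emph{same} period bound, and you rightly flag this as the crux. The paper avoids this detour entirely by reversing the order: it first finds a diffeomorphism $\Psi$ (this is the ``$S^1$-equivariant Moser-type normalization'' you allude to) such that $\beta:=\Psi^*\alpha=(\psi\circ\mathfrak p)\alpha_0+\delta$ with $\psi$ already a function on the quotient $Q$ and $\delta$ a controlled remainder. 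Since $\Psi^*$ preserves both $T_{\min}$ and the contact volume, one works directly with $\beta$: critical points of $\psi$ yield genuine periodic $\beta$-orbits of period $\psi(q)$ (in the simplified case $\delta=0$ displayed in the paper), so $T_{\min}(\beta)\leq\min_Q\psi$. The mean inequality used is then simply $\min(\psi^m)\leq\mathrm{AM}(\psi^m)$ on $Q$---no power-mean step and no comparison between two different contact forms is needed. The equality case also becomes transparent: equality forces $\psi$ constant, hence $\beta$ (and thus $\alpha$) Zoll.

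In short: your proposal is not wrong, and you have correctly located the hard point; but by applying the averaging to the \emph{form} rather than using a diffeomorphism to normalize it, you create an extra comparison problem (orbits of $f\beta_0$ versus orbits of $\bar f\beta_0$) that the paper's route sidesteps. The genuine analytic work---which the paper only sketches and refers to \cite{AB19} for---is the construction of the normal form $\Psi$ with a remainder $\delta$ good enough that the argument of the simplified case survives.
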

The results of Weinstein and of Abbondandolo, Bramham, Hryniewicz, Salom\~ao and of Abbondandolo, Benedetti are based on techniques from Symplectic Geometry which we will explore in the next talk. Geodesics are curves on $M$ with zero acceleration. They satisfy therefore a second-order differential equation. In Euclidean space there is a standard procedure to pass from a second-order equation $\ddot x=f(x,\dot x)$ to a first-order equation $\dot X=F(X)$ at the expense of doubling the number of variables. Indeed, we can include the velocity components as a variable by defining $X=(x,v)$ and $F(X)=(v,f(x,v))$. Then, there is a bijection between the solutions of $\ddot x=f(x,\dot x)$ and of $\dot X=F(X)$. For geodesics on a general manifold $M$, this procedure amounts to considering a first-order equation on the tangent space $TM$, which will yield a Hamiltonian system with respect to a canonical symplectic form, upon identifying $TM$ with the cotangent bundle $T^*M$.
\newpage

\section{Second talk: Systolic inequalities in Symplectic geometry}
Goal of this talk is to discuss how symplectic geometry can help us in studying the local systolic inequality in a neighborhood of a Zoll metric. The key is due to the fact that there are many more transformations on $T^*M$ preserving the symplectic structure (so-called symplectomorphisms) than transformations on $M$ preserving the metric (so-called isometries). Therefore, we have more chances to use this larger group of transformations to find a useful normal form for our problem. If you recall the central role played by the uniformization theorem (which essentially gives us a normal form for the metric) in proving the systolic inequalities on $T^2, P^2$, you can see how crucial normal forms can be.

This circle of ideas works only because the weak systolic ratio $\rho$ is invariant under symplectic transformations. This remarkable fact will bring us to formulate, in the initial part of this talk, a natural version of the systolic inequality first on $\R^{2n}$ and then on more general symplectic manifolds. Unfortunately, these inequalities never hold globally \cite{ABHS19,Sag18}. The reason of this global failure is the same as the reason of the local success. The world of symplectic manifolds is much more flexible than the world of Riemannian manifolds, so it is easier to construct wild counterexamples. This is extremely interesting because then one can ask where the boundary between local success and global failure lies.

We will see that assuming convexity conditions will be enough to get a systolic inequality in many cases:
\begin{itemize}
\item On cotangent bundles this amounts to considering the class of Finsler metrics, a generalized version of Riemannian metrics where the notion of length is well-defined but the notion of angle is not. The rule of thumb is that when the systolic inequality holds for Riemannian metrics on $M$, then it holds also for Finsler metrics on $M$.
\item On the simplest symplectic manifold $\R^{2n}$ this will bring us to consider the Viterbo conjecture regarding the optimal constant for the systolic inequality on convex sets \cite{Vit00}.
\end{itemize}
It is important to notice, nonetheless, that convexity is not an invariant property under symplectic transformations. Therefore, it is still an interesting open problem to find a natural symplectic setting for the systolic inequality (although there are some interesting candidates \cite{ABHS18b}).

\subsection{Hamiltonian systems in euclidean space}
Every smooth function $H:\R^{n}\times\R^n\to\R$ yields a corresponding Hamilton equation for curves $t\mapsto (q(t),p(t))\in\R^n\times\R^n$. This is the first-order differential equation given by
\[
\left\{\begin{matrix}
\displaystyle\dot q(t)=+\frac{\partial H}{\partial p}(q(t),p(t))\vspace{7pt}\\ 
\displaystyle\dot p(t)=-\frac{\partial H}{\partial q}(q(t),p(t))
\end{matrix}
\right.
\]
As every first-order ordinary differential equation, the dynamics is described by the flow maps $(q,p)\mapsto\Phi^t(q,p)$ yielding the position after time $t\in\R$ of a solution starting at $(q,p)$ at time $0$. Flows satisfy the group property: $\Phi^0=\mathrm{id}_{\R^{n}\times\R^n}$ and $\Phi^{s+t}=\Phi^s\circ\Phi^t$.

The function $H$ is called the Hamiltonian of the system and is a conserved quantity along the motion which we interpret as the energy of the system.
\begin{ex}
Prove the last statement by showing that $\tfrac{\dd}{\dd t}H\circ (q(t),p(t))=0$.
\end{ex}
Thus for every $h\in\R$, the hypersurface $S_h:=H^{-1}(h)$ is invariant under the flow, which means that if a solution is contained in $S_h$ at some time $t$, it remains in $S_h$ for all times.

Important examples are given by systems with a mechanical Hamiltonian
\[
H(q,p)=\tfrac{1}{2}|p|^2+U(q),
\]
where $U:\R^n\to\R$ is a smooth function. In this case $H$ represents the sum of the kinetic and the potential energy of a particle with unit mass and Hamilton's equation are equivalent to Newton's second law for the particle $\ddot q(t)=-\nabla U(q(t))$, where $\nabla$ is the gradient of $U$.
\begin{rmk}\label{r:harm}
When
\[
U(q)=\frac12\sum_{j=1}^n\frac{1}{a_j^2}q_j^2,
\]
the mechanical Hamiltonian describes a system of $n$ uncoupled harmonic oscillators with frequencies $2\pi a_i$. In particular, if all the numbers $a_i$ are equal all orbits are periodic with the same period. Finally, notice that the energy levels of the harmonic oscillators are homothetic ellipses and they are spheres exactly when the $a_i$ are all equal.
\end{rmk}
\subsection{Enters the symplectic form}
Symplectic geometry can be understood as the geometry of classical mechanics, or more generally of Hamiltonian systems. According to the ideas of Felix Klein in his \textit{Erlangensprogramm}, doing geometry means considering a group of transformations $G$ acting on a space $E$, which we interpret as symmetries, and classify subsets of the space up to the action of this group. A classical example is euclidean geometry of the plane, where $E=\R^2$ and $G$ is the group of euclidean isometries, and one can classify triangles in $\R^2$ up to isometry. On the same space, there might be several group acting and one can compare the resulting geometries. For instance, if we consider the group of affine transformations on the plane we get a geometry which is different from the euclidean one, since up to affine transformations there is only one triangle but there are many different triangles up to isometries. In this case, one says that euclidean geometry is more rigid than affine geometry, or, equivalently, that affine geometry is more flexible than euclidean geometry.

In symplectic geometry, we study those transformations $(q,p)\mapsto (Q,P)$ of $\R^n\times\R^n$ which leave the form of Hamilton equations invariant meaning that for all Hamiltonians $H$, there holds
\[
\left\{\begin{matrix}
	\displaystyle\dot q=+\frac{\partial H}{\partial p}(q,p)\vspace{7pt}\\ 
	\displaystyle\dot p=-\frac{\partial H}{\partial q}(q,p)
\end{matrix}
\right.\quad\iff\quad \left\{\begin{matrix}
	\displaystyle\dot Q=+\frac{\partial H}{\partial P}(Q,P)\vspace{7pt}\\ 
	\displaystyle\dot P=-\frac{\partial H}{\partial Q}(Q,P)
\end{matrix}
\right.
\]
\vspace{2pt}

Said in other words: The change of coordinates $(q,p)\to (Q,P)$ sends solutions to the Hamilton equations of $H$ to solutions to the Hamilton equations of the function $H$ expressed in the new coordinates, and vice versa.
We call $\mathrm{Symp}(\R^{2n})$ the group of transformations having this properties. They are referred to as canonical transformations, or, in more modern terms, to symplectomorphisms. The above definition of symplectomorphism is not the easiest to work with since we need to check that the Hamilton equations are preserved for all functions $H$. To get a better description, we rewrite Hamilton's equations in a coordinate-free way.

We define the following non-degenerate two-form on $\R^{2n}$:
\[
\omega_0:=\sum_{j=1}^n\dd p_j\wedge \dd q_j.
\]
Non-degenerate means that, if $(\R^{2n})^*$ denotes the dual of $\R^{2n}$, then the map $\R^{2n}\to (\R^{2n})^*$, $u\mapsto\omega_0(u,\cdot)$ is an isomorphism. Therefore, for every function $H$, we can uniquely define a vector field $X_H$ on $\R^{2n}$ via the relation
\[
\omega_0(X_H,\cdot)=-\dd H,
\]
where $\dd H$ denotes the differential of $H$. The vector field has the block form
\[
X_H=\begin{pmatrix}
\displaystyle+\frac{\partial H}{\partial p}\vspace{5pt}\\ \displaystyle-\frac{\partial H}{\partial q}.	
\end{pmatrix}
\]
so that Hamilton's equation can be rewritten as $\dot z=X_H(z)$. Using the transformation rules for vector fields, forms and functions, we arrive at the following result.
\begin{thm}
A transformation $\varphi$ is a symplectomorphism if and only if $\varphi$ preserves the symplectic form: $\varphi^*\omega_0=\omega_0$.
\end{thm}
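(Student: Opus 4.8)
The plan is to reduce both sides of the claimed equivalence to a statement about contracting $\omega_0$ with Hamiltonian vector fields, and then to exploit non-degeneracy. Recall that Hamilton's equation for $H$ has been rewritten as $\dot z=X_H(z)$, with $X_H$ characterized by $\omega_0(X_H,\cdot)=-\dd H$. Fix a diffeomorphism $\varphi$ of $\R^{2n}$. If $z(t)$ solves $\dot z=X_H(z)$, then $w(t):=\varphi(z(t))$ solves $\dot w=(\varphi_*X_H)(w)$; on the other hand the Hamiltonian ``$H$ read in the new coordinates'' is $\widetilde H:=H\circ\varphi^{-1}$, whose flow is generated by $X_{\widetilde H}$. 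Unwinding the definition given in the excerpt, $\varphi\in\symp(\R^{2n})$ if and only if
\[
\varphi_*X_H=X_{H\circ\varphi^{-1}}\qquad\text{for every smooth }H\colon\R^{2n}\to\R .
\]
So the theorem amounts to showing that this family of identities (over all $H$) is equivalent to $\varphi^*\omega_0=\omega_0$.

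First I would record the elementary naturality identity: for any vector field $X$ on $\R^{2n}$,
\[
\varphi^*\big(\omega_0(\varphi_*X,\cdot)\big)=(\varphi^*\omega_0)(X,\cdot),
\]
which is just the change-of-variables rule for contracting a two-form with a vector field. Apply this with $X=X_H$. Using $\omega_0(X_{\widetilde H},\cdot)=-\dd\widetilde H=-\dd(H\circ\varphi^{-1})$ together with $\varphi^*\big(-\dd(H\circ\varphi^{-1})\big)=-\dd H=\omega_0(X_H,\cdot)$, one sees that the symplectomorphism condition $\varphi_*X_H=X_{H\circ\varphi^{-1}}$ is equivalent to
\[
(\varphi^*\omega_0)(X_H,\cdot)=\omega_0(X_H,\cdot),\qquad\text{i.e.}\qquad(\varphi^*\omega_0-\omega_0)(X_H,\cdot)=0 ,
\]
and $\varphi\in\symp(\R^{2n})$ precisely when this holds for all $H$. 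In particular the implication $\varphi^*\omega_0=\omega_0\Rightarrow\varphi\in\symp(\R^{2n})$ is immediate.

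For the converse I need only the observation that at each point $z$ the vectors $X_H(z)$, as $H$ varies, exhaust the tangent space $T_z\R^{2n}$: given $v\in T_z\R^{2n}$, the linear Hamiltonian $H(w):=-\omega_0(v,w)$ has $\dd H\equiv-\omega_0(v,\cdot)$, hence $X_H\equiv v$ by non-degeneracy of $\omega_0$. Therefore $(\varphi^*\omega_0-\omega_0)_z$ annihilates every vector in its first slot, so it vanishes at $z$; as $z$ is arbitrary, $\varphi^*\omega_0=\omega_0$. The only genuinely delicate point in all of this is bookkeeping rather than depth: one must be careful with the convention ``$H$ in the new coordinates $=H\circ\varphi^{-1}$'' and with the compatibility between Hamiltonian flows and the pushforward of vector fields, since a stray inverse or sign there flips the whole equivalence. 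Everything else is the naturality of contraction plus non-degeneracy of $\omega_0$; no analysis is involved.
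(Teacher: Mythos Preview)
Your proof is correct and is precisely the argument the paper gestures at with the sentence ``Using the transformation rules for vector fields, forms and functions, we arrive at the following result'' (the paper does not spell out a proof). You have simply made those transformation rules explicit: the naturality identity $\varphi^*\big(\omega_0(\varphi_*X,\cdot)\big)=(\varphi^*\omega_0)(X,\cdot)$ together with $\varphi^*\dd(H\circ\varphi^{-1})=\dd H$, and the observation that Hamiltonian vector fields span each tangent space.
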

\begin{ex}\label{ex:un}
Identify $\R^{2n}$ with $\C^n$ via $z_j=q_j+ip_j$ for $j=1,\ldots,n$. Show that $\omega_0$ is the imaginary part of the standard hermitian product $h=\sum_{j=1}^n\dd z_j\dd\bar z_j$. Deduce that the unitary group $U(n)$ is a subgroup of $\mathrm{Symp}(\R^{2n})$. Recall that a map is unitary if it is linear and preserves $h$.
\end{ex}
At this point, we naturally ask how big is the group of symplectomorphisms. It is actually an infinite dimensional group, whose one-dimensional subgroups are exactly given by Hamiltonian flows.
\begin{thm}
For every $H$ and for every $t$ such that $\Phi^t$ is defined on the whole $\R^{2n}$, the flow map $\Phi^t$ is a symplectomorphism.
\end{thm}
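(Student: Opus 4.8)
The plan is to show directly that $(\Phi^t)^*\omega_0 = \omega_0$ for every $t$ for which $\Phi^t$ is globally defined, since by the previous theorem this is equivalent to $\Phi^t\in\mathrm{Symp}(\R^{2n})$. As $\Phi^0=\id$ obviously satisfies this, it suffices to prove that the two-form $\alpha(t):=(\Phi^t)^*\omega_0$ does not depend on $t$, and for that I would differentiate with respect to $t$ and show the derivative vanishes.

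The key point is the identity $\tfrac{\dd}{\dd t}(\Phi^t)^*\omega_0 = (\Phi^t)^*\big(\mathcal{L}_{X_H}\omega_0\big)$, which expresses the derivative of the pullback along the flow in terms of the Lie derivative of $\omega_0$ along its generating vector field $X_H$; it follows from the group property $\Phi^{s+t}=\Phi^s\circ\Phi^t$ together with the definition $\mathcal{L}_{X_H}\omega_0=\tfrac{\dd}{\dd s}\big|_{s=0}(\Phi^s)^*\omega_0$. I would then invoke Cartan's magic formula $\mathcal{L}_{X_H}\omega_0=\dd(\iota_{X_H}\omega_0)+\iota_{X_H}(\dd\omega_0)$. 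The second term vanishes because $\omega_0$ is exact, hence closed: indeed $\omega_0=\dd\lambda_0$ with $\lambda_0=\sum_j p_j\,\dd q_j$, so $\dd\omega_0=0$. The first term vanishes because the defining relation $\omega_0(X_H,\cdot)=-\dd H$ says exactly that $\iota_{X_H}\omega_0=-\dd H$, so $\dd(\iota_{X_H}\omega_0)=-\dd(\dd H)=0$. Hence $\mathcal{L}_{X_H}\omega_0=0$, so $\tfrac{\dd}{\dd t}\alpha(t)=(\Phi^t)^*(0)=0$, and since $\alpha(0)=\omega_0$ we conclude $\alpha(t)=\omega_0$ for all $t$.

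For a reader not yet comfortable with Cartan's formula, I would instead give the equivalent bare-hands computation: writing $z=(q,p)$, one has $((\Phi^t)^*\omega_0)_z(u,v)=\omega_0\big(D\Phi^t(z)u,\,D\Phi^t(z)v\big)$, and differentiating in $t$ using the variational equation $\tfrac{\dd}{\dd t}D\Phi^t=DX_H(\Phi^t)\cdot D\Phi^t$ reduces the statement to the pointwise linear-algebra fact that $DX_H$ is, at every point, an $\omega_0$-skew-symmetric endomorphism. This last fact is immediate from the block form $X_H=(\partial H/\partial p,\,-\partial H/\partial q)$ and the symmetry of the Hessian of $H$ (equality of mixed second partials).

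The main obstacle is not a deep one: it is essentially the bookkeeping needed to establish cleanly the differentiation formula $\tfrac{\dd}{\dd t}(\Phi^t)^*\omega_0=(\Phi^t)^*\mathcal{L}_{X_H}\omega_0$ (equivalently, to justify differentiating under the pullback and the chain rule $\tfrac{\dd}{\dd t}D\Phi^t=DX_H(\Phi^t)D\Phi^t$), together with spotting the two places where the structural facts ``$\omega_0$ is closed'' and ``$X_H$ is $\omega_0$-Hamiltonian, hence $\iota_{X_H}\omega_0$ is exact'' enter. These two facts are precisely what forces the flow to be symplectic, and the very same computation shows, more generally, that a vector field has symplectic flow if and only if $\iota_X\omega_0$ is closed.
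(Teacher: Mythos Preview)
Your proposal is correct. Your second, ``bare-hands'' computation is precisely the route the paper takes (presented there as a guided exercise): differentiate $\omega_0(\dd\Phi^t u,\dd\Phi^t v)$ in $t$, use the variational equation $\tfrac{\dd}{\dd t}\dd\Phi^t=\dd X_H\cdot\dd\Phi^t$, and reduce to the symmetry of the Hessian via $\omega_0(\dd X_H\cdot\,,\,\cdot)=-\dd^2H$. Your primary approach via Cartan's magic formula is a slightly more abstract packaging of the same computation; the paper in fact invokes exactly this argument later when it notes, for general symplectic manifolds, that ``$\Phi_H^t$ preserves $\omega$ (this follows from Cartan's magic formula)''. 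The Cartan route has the advantage of making transparent which two structural facts are doing the work---closedness of $\omega_0$ and exactness of $\iota_{X_H}\omega_0$---and generalizes verbatim to arbitrary symplectic manifolds, whereas the paper's exercise is phrased in Euclidean coordinates and relies on the explicit block form of $X_H$.
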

\begin{ex}
Prove the theorem completing the details below. The condition that $\Phi^t$ is a symplectomorphism can be rewritten as
\[
\omega_0(\dd\Phi^t\cdot u,\dd\Phi^t\cdot v)=\omega_0,\qquad\forall\,t\in\R,\qquad\forall\,u,v\in\R^{2n}.
\]
Since $\Phi^0=\mathrm{id}$, this relation is satisfied for $t=0$ and therefore, it is enough to assume that the derivative of the left-hand side vanishes. Show that this is equivalent to 
\[
\omega_0(\dd X_{H}\cdot\dd\Phi^t\cdot u,\dd\Phi^t\cdot v)=\omega_0(\dd X_{H}\cdot\dd\Phi^t\cdot v,\dd\Phi^t\cdot u),
\]
where we have used that $\tfrac{\dd}{\dd t}\dd\Phi^t=\dd X_H\cdot\dd\Phi^t$ (why is this identity true? Hint: interchange the order of the time derivative $\tfrac{\dd}{\dd t}$ and the spatial derivative $\dd$). Conclude by observing that
\[
\omega_0(\dd X_H\,\cdot,\cdot )=\dd\big(\omega_0(X_H,\cdot)\big)=-\dd^2 H,
\]
where $\dd^2 H$ denotes the Hessian of $H$ which is a symmetric bilinear form.
\end{ex}
\subsection{Symplectic geometry in cotangent bundles}
So far we have introduced the symplectic geometry of the Euclidean space. However, this is just an example of a more general construction.
\begin{dfn}
	Let $N$ be a manifold. A symplectic form $\omega$ on $N$ is a closed, non-degenerate two-form. Closed means that its exterior differential vanishes:  $\dd\omega=0$. Non-degenerate means that for all $z\in N$ the skew-symmetric bilinear form $\omega_z:T_zN\times T_zN\to\R$ is non-degenerate. The pair $(N,\omega)$ is called symplectic manifold.	
\end{dfn}
\begin{ex}
Show that the non-degeneracy of $\omega$ implies that $N$ has even dimension $n=2m$.	
\end{ex}
By integration, the symplectic form assigns a sort of signed area to two-dimensional objects in $N$. More generally, for every $k=1,\ldots,m$, we can use $\omega^k$ to assign a sort of volume to $2k$-dimensional objects. Then, non-degeneracy of $\omega$ is equivalent to the fact that $\omega^m$ is nowhere vanishing and hence is a genuine volume form on $N$.

The condition $\dd\omega=0$ can be interpreted as a vanishing of a sort of symplectic curvature. In particular, it implies that all symplectic forms locally look the same and indeed, by Darboux theorem there are local coordinates $(q_1,p_1,\ldots,q_m,p_m)$ such that $\omega=\sum_{j=1}^m\dd p_j\wedge\dd q_j$ coincides with the standard symplectic form in euclidean space. 

Symplectic manifolds give us a natural way of defining Hamilton's equations (a more general class of manifolds where this is possible is represented by Poisson manifolds \cite{Vai94}). Indeed, as in the euclidean case, if $H:N\to\R$ is a smooth function, by the non-degeneracy of $\omega$, we can define the vector field $X_H$ by
\[
\omega(X_H,\cdot)=-\dd H.
\]
We denote by $\Phi_H^t$ the flow on $N$ obtained integrating $X_H$. All properties of Hamiltonian flows which we have seen in $\R^{2m}$ still hold on general symplectic manifolds: $H$ is a first integral (show this), and $\Phi_H^t$ preserves $\omega$ (this follows from Cartan's magic formula). Moreover, if $h\in\R$ is a regular value of $H$, then the Hamiltonian flow, up to time reparametrization, on the smooth hypersurface $S:=\{H=h\}$ depends on $H$ only via $S$. Indeed, $X_H|_S$ is a nowhere section of the one-dimensional distribution $\ker(\omega|_{TS})$ on $S$, where $\omega|_{TS}$ is the restriction of $\omega$ to $S$. Therefore, if $K$ is another Hamiltonian having $S$ as a level set, we have $X_H|_S=fX_K|_S$ for some nowhere vanishing function $f:S\to\R$ and therefore the flows of $H$ and $K$ on $S$ have the same geometric trajectories but with a different time parametrization.  
\begin{ex}
Show that $X_H|_S$ annihilates $\omega|_{TS}$. Prove that $\ker(\omega|_{TS})$ is one-dimensional. Hint: the map $\ker(\omega|_{TS})\to \mathrm{Ann}(TS)$ given by $v\mapsto\omega(v,\cdot)$ is an isomorphism.
\end{ex}

We mention two important classes of examples of symplectic manifolds. The first one is given by the complex projective space $P^n_{\C}$ with the Fubini--Study symplectic form and its smooth complex subvarieties (or more generally by Kähler manifolds). These examples are extremely important in algebraic geometry but will not be discussed here. The second one, which will bring us back to Riemannian geometry, is given by cotangent bundles $T^*M$ of closed manifolds $M$. Recall that the cotangent bundle is the vector bundle $\pi:T^*M\to M$ such that $T^*_xM$ is the dual space of the tangent space $T_xM$ at $x\in M$. If $q$ are local coordinates on $M$ and $(q,p)$ are the corresponding local coordinates on $T^*M$, then we define
\[
\omega_{T^*M}:=\sum_{j=1}^m\dd p_j\wedge \dd q_j.
\]
It turns out that this definition does not depend on the choice of coordinates $q$ (check this) and therefore $\omega_{T^*M}$ yields a well-defined symplectic form on $T^*M$.

Starting from a Riemannian metric $g$ on $M$, we can define a canonical Hamiltonian on $T^*M$: It is the kinetic energy $H_g:T^*M\to\R$, $H_g(p):=\tfrac12|p|^2_g$, where $|\cdot|_g$ is the dual norm given by $g$. The key result is now that Hamiltonian trajectories of $H_g$ are exactly the tangent lifts of geodesics for $g$ after applying the duality isomorphism $\flat:TM\to T^*M$, $v\mapsto g(v,\cdot)$. 
\begin{thm}
The trajectory of the Hamiltonian flow of the function $H_g$ are exactly given by $\flat(\dot\gamma):\R\to T^*M$, where $\gamma:\R\to M$ is a geodesic of $g$.
\end{thm}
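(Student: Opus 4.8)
The plan is to verify the statement in a natural coordinate chart on $T^*M$, which suffices because both the geodesic equation and Hamilton's equation are defined intrinsically and such charts cover $T^*M$; moreover, since a Hamiltonian trajectory is uniquely determined by its initial point, it is enough to match a solution of Hamilton's equations with the lift of a geodesic sharing the same initial position and momentum.

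First I would pick coordinates $q=(q^1,\dots,q^m)$ on an open set $U\subset M$, write $g=g_{ij}(q)\,\dd q^i\,\dd q^j$, and let $g^{ij}$ denote the inverse matrix, so that the dual norm is $|p|_g^2=g^{ij}(q)\,p_ip_j$ and $H_g(q,p)=\tfrac12 g^{ij}(q)\,p_ip_j$ in the induced coordinates $(q,p)$ on $T^*U$. Hamilton's equations then read
\[
\dot q^i=g^{ij}(q)\,p_j,\qquad \dot p_i=-\tfrac12\,\partial_i g^{jk}(q)\,p_jp_k .
\]
The first equation says precisely that, along a solution, the momentum is the image under $\flat$ of the base velocity, $p_i=g_{ij}(q)\,\dot q^j$; equivalently, the projected curve $\gamma(t)=q(t)$ satisfies $(q(t),p(t))=\flat(\dot\gamma(t))$. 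Hence the content of the theorem is that, after this substitution, the second equation is equivalent to the geodesic equation for $\gamma$.

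Next I would carry out the substitution: differentiating $p_i=g_{ij}(q)\,\dot q^j$ gives $\dot p_i=\partial_k g_{ij}\,\dot q^k\dot q^j+g_{ij}\ddot q^j$, while inserting $p_j=g_{jl}\dot q^l$ into the right-hand side of the second Hamilton equation and using the identity $\partial_i g^{jk}=-g^{ja}g^{kb}\partial_i g_{ab}$ --- obtained by differentiating $g^{jk}g_{kl}=\delta^j_l$ --- turns $-\tfrac12\partial_i g^{jk}p_jp_k$ into $\tfrac12\partial_i g_{lm}\,\dot q^l\dot q^m$ after the metric factors contract. Equating the two expressions for $\dot p_i$ and symmetrizing the quadratic form in $\dot q$ yields
\[
g_{ij}\ddot q^j+\tfrac12\big(\partial_k g_{ij}+\partial_j g_{ik}-\partial_i g_{kj}\big)\dot q^k\dot q^j=0,
\]
which, after contracting with $g^{li}$, is exactly $\ddot q^l+\Gamma^l_{kj}\dot q^k\dot q^j=0$, the geodesic equation written with the Christoffel symbols of the Levi-Civita connection. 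Running the same computation in reverse shows that if $\gamma$ is a geodesic then $t\mapsto\flat(\dot\gamma(t))$ solves Hamilton's equations, which establishes the equivalence.

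The only real obstacle is the index bookkeeping in the middle step --- in particular, remembering to differentiate $g^{ij}g_{jk}=\delta^i_k$ in order to trade derivatives of $g^{ij}$ for derivatives of $g_{ij}$, and correctly symmetrizing $\partial_k g_{ij}\dot q^k\dot q^j$ so that the Christoffel symbols emerge. A cleaner, more conceptual alternative avoids this: geodesics are the extremals of the energy functional $\int\tfrac12|\dot\gamma|_g^2\,\dd t$, hence they solve the Euler--Lagrange equations of the Lagrangian $L(q,v)=\tfrac12 g_{ij}(q)\,v^iv^j$; since $H_g$ is the fiberwise Legendre transform of $L$ and $\flat$ is precisely the fiber derivative $v\mapsto\partial L/\partial v$, the standard correspondence between the Euler--Lagrange flow of $L$ and the Hamiltonian flow of $H_g$ under the Legendre transform gives the claim at once. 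I would present the coordinate computation as the main argument and record the Legendre-transform viewpoint as a remark.
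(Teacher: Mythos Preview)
Your argument is correct: the coordinate computation is carried out cleanly, the identity $\partial_i g^{jk}=-g^{ja}g^{kb}\partial_i g_{ab}$ is exactly what is needed to convert the second Hamilton equation into the Christoffel form, and the Legendre-transform remark is a sound conceptual alternative. Note, however, that the paper does not actually prove this theorem --- it is stated as a well-known fact and used as a bridge between Riemannian and symplectic geometry --- so there is no proof in the paper to compare against; your write-up simply supplies what the paper leaves to the reader.
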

Thanks to this result we will be able to compare notions of size coming from Riemannian geometry like systoles and volumes with notions of size coming from symplectic geometry. In order to do so, we need first to introduce what we mean by symplectic size starting from $\R^{2m}$, where the first breakthrough was obtained by Gromov.
\subsection{Symplectic capacities}
Let us consider again the standard symplectic form $\omega_0$ on $\R^{2m}$. For $m=1$, $\omega_0$ is just the area form on the plane. For $m>1$, we still have
\[
\omega^m=m!\cdot\dd\vol_{\!\R^{2m}},
\]
where $\dd\vol_{\!\R^{2m}}$ is the euclidean volume. Therefore, we see that symplectomorphisms are volume-preserving:
\[
\mathrm{Symp}(\R^{2m})\subset\mathrm{Vol}(\R^{2m}):=\{F:\R^{2m}\to\R^{2m}\ |\ F^*\dd\mathrm{vol}_{\R^{2m}}=\dd\mathrm{vol}_{\R^{2m}}\}.
\]
\noindent\!\begin{minipage}[t]{.5\textwidth}
This inclusion motivates us to compare symplectic and volume-preserving geometry by looking at their action on subsets of $\R^{2m}$ for $m>1$, very much as we did in our example with the euclidean and the affine geometry of the plane. Let us consider for instance $B^{2m}(r)\subset\R^{2m}$ the ball of radius $r$ in $\R^{2m}$ and write simply $B^{2m}:=B^{2m}(1)$. What can be said of the image of $B^{2m}$ under a symplectomorphism $\varphi$? Gromov suggests to compare $\varphi(B^{2m})$ with an infinite cylinder $Z^{2m}(r):=\{q_1^2+p_1^2\leq r^2\}\subset\R^{2m}$ of radius $r$. No matter how small $r$ is, there is always a volume preserving transformation $F_r$ (Exercise: Find an explicit one) squeezing $B^{2m}$ inside $Z^{2m}(r)$.
\end{minipage}
\begin{minipage}[t]{.5\textwidth}
	\kern-7pt\centering\includegraphics[width=.9\textwidth]{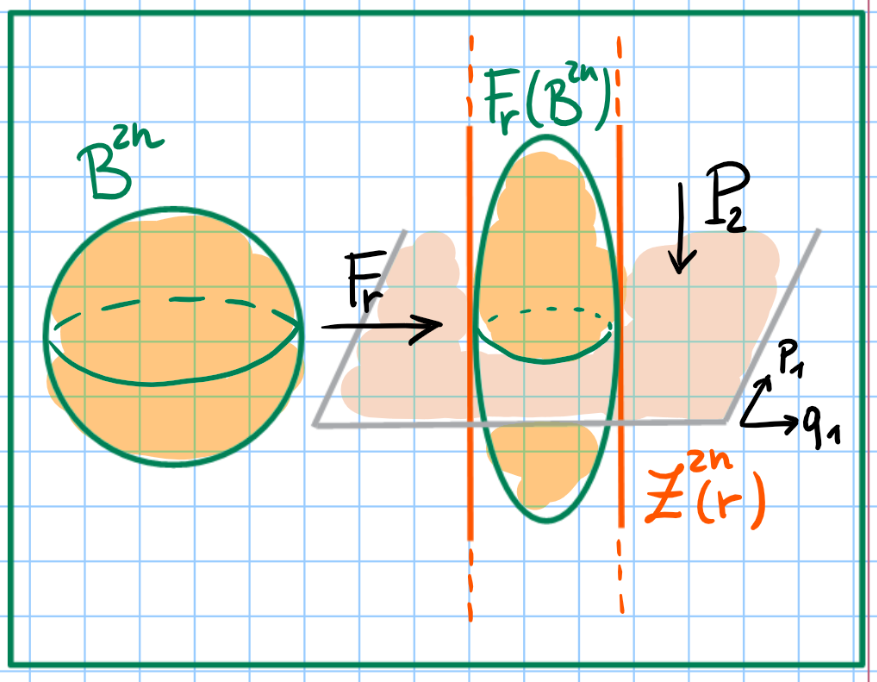}\captionof{figure}{Squeezing the ball}	
\end{minipage}
\medskip

Is the same true for some symplectomorphism? The answer is given by Gromov's non-squeezing theorem.
\begin{thm}[Gromov non-squeezing theorem 1985 \cite{Gro85}]
There is no symplectomorphism $\varphi$ such that $\varphi(B^{2m})\subset Z^{2m}(r)$ for $r<1$.
\end{thm}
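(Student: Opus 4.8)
The plan is to argue by contradiction using Gromov's theory of pseudoholomorphic curves. Suppose there were a symplectomorphism $\varphi$ with $\varphi(B^{2m})\subset Z^{2m}(r)$ and $r<1$. Since $\varphi(B^{2m})$ is bounded, it lies inside $B^2(r)\x B^2(R)^{m-1}$ for some large $R$ (using that $B^{2m-2}(R)\subset B^2(R)^{m-1}$, and $Z^{2m}(r)=B^2(r)\x\R^{2m-2}$). First I would fix $\epsilon>0$ with $\epsilon<\pi(1-r^2)$ and form the closed symplectic manifold $(X,\omega)=(S^2\x T^{2m-2},\,\sigma\oplus\tau)$, where $\sigma$ is an area form on $S^2$ of total area $\pi r^2+\epsilon$ and $\tau$ is a product of area forms on the $m-1$ two-torus factors of $T^{2m-2}$, each of total area larger than $\pi R^2$. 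Then $B^2(r)\x B^2(R)^{m-1}$ embeds symplectically into $X$, so composing with $\varphi$ realizes $B^{2m}$ as a symplectically embedded ball in $X$; write $p:=\varphi(0)$ for the image of its centre.

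Next I would pick an $\omega$-compatible almost complex structure $J$ on $X$ that restricts on $\varphi(B^{2m})$ to the pushforward $\varphi_*J_0$ of the standard complex structure on $\R^{2m}$; this is possible because the space of $\omega$-compatible almost complex structures is nonempty and contractible, so a compatible $J$ defined near the compact set $\varphi(\overline{B^{2m}})$ extends to all of $X$. The analytic core is then the claim that there is a non-constant $J$-holomorphic sphere $u\colon S^2\to X$ in the homology class $A:=[S^2\x\{\mathrm{pt}\}]$ passing through $p$. For the product complex structure on $X$ the only such spheres are the obvious slices $S^2\x\{\mathrm{pt}\}$, exactly one of which passes through $p$; since this count is a deformation invariant it stays nonzero for our $J$. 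Here one uses that $A$ is primitive and that every spherical homology class in $X$ is an integer multiple of $A$ — because $\pi_2(T^{2m-2})=0$ — so that in the class $A$ Gromov's compactness theorem produces no bubbling and no multiple covers.

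It then remains to derive a contradiction from areas. The preimage $\Sigma:=u^{-1}(\varphi(B^{2m}))$ is an open Riemann surface, and $(\varphi^{-1}\circ u)|_\Sigma$ is a $J_0$-holomorphic curve in the unit ball $B^{2m}\subset\R^{2m}$ through the origin with boundary on $\p B^{2m}$. A $J_0$-holomorphic curve is a minimal surface whose Euclidean area equals the integral of $\omega_0$ over it (Wirtinger's equality), and by the monotonicity inequality for minimal surfaces a minimal surface through the centre of the unit ball with boundary on the unit sphere has area at least $\pi$. Since $\varphi$ preserves symplectic area, this gives
\[
\pi\ \le\ \int_\Sigma(\varphi^{-1}\circ u)^*\omega_0\ =\ \int_\Sigma u^*\omega\ \le\ \langle[\omega],A\rangle\ =\ \pi r^2+\epsilon\,,
\]
which contradicts $\epsilon<\pi(1-r^2)$.

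The genuine obstacle is the existence of the $J$-holomorphic sphere through a prescribed point in the class $A$: making this rigorous requires the full machinery of moduli spaces of pseudoholomorphic curves — Gromov compactness, the exclusion of bubbling in this class, and transversality for the evaluation map — and it is precisely the innovation of Gromov's paper. A logically independent alternative would be to construct a symplectic capacity $c$, monotone under symplectic embeddings, with $c(B^{2m}(\rho))=c(Z^{2m}(\rho))=\pi\rho^2$ for all $\rho>0$ (for instance the Hofer--Zehnder capacity, built from Hamiltonian dynamics, or an Ekeland--Hofer capacity, built from the variational theory of the Hamiltonian action functional); non-squeezing would then be immediate from monotonicity, but constructing any such capacity is of comparable difficulty.
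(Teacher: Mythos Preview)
Your sketch is correct and follows Gromov's original pseudoholomorphic-curve argument. The paper, however, does not prove this theorem at all: it is an expository survey, and the non-squeezing theorem is simply quoted as Gromov's result and then used as a black box. The only related reasoning in the paper is the observation that the existence of a symplectic capacity is equivalent to non-squeezing, together with the easy direction (capacity $\Rightarrow$ non-squeezing) via monotonicity---exactly the alternative route you mention in your closing paragraph---but no capacity is constructed there independently of non-squeezing. So there is no proof in the paper against which to compare yours; your proposal goes considerably beyond what the text provides.
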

An equivalent way of reformulating this amazing result is that the symplectic image of the ball has a two-dimensional shadow of area at least $\pi$:
\begin{equation}\label{e:ns}
\mathrm{vol}_{\R^2}\big(P_{\R^2}(\varphi(B^{2m}))\big)\geq \pi,\qquad\forall\,\varphi\in\mathrm{Symp}(\R^{2m}),
\end{equation}
where $P_{\R^2}:\R^{2m}\to\R^2$ is the projection $(q,p)\mapsto (q_1,p_1)$ on the first two coordinates.
\begin{ex}
Show that \eqref{e:ns} implies Gromov's non-squeezing theorem.
\end{ex}
\begin{ex}
Let $\tilde Z(r):=\{q_1^2+q_2^2\leq r^2\}\subset\R^{2m}$ for $m>1$ and $r>0$. Find $\varphi\in\mathrm{Symp}(\R^{2m})$ such that $\varphi(B^{2m})\subset \tilde Z(r)$.
\end{ex}
We can interpret Gromov's non-squeezing theorem as saying that $B^{2m}$ is symplectically bigger than the infinite cylinder $Z^{2m}(r)$ for $r<1$. This suggests that one should be able to define a symplectic notion of size for subsets of $\R^{2m}$. Which properties should a symplectic size (in the following discussion called capacity) satisfy?
\begin{dfn}
A capacity $c:\mathcal P(\R^{2m})\to[0,\infty]$ assigns to every subset of $\R^{2m}$ a non-negative number according to the rules
\begin{enumerate}[(i)]
\item $c(\varphi(A))=c(A)$, for all $\varphi\in\mathrm{Symp}(\R^{2m})$, $A\subset\R^{2m}$;
\item $c(A_1)\leq c(A_2)$ for all $A_1\subset A_2\subset\R^{2m}$;
\item $c(rA)=r^2c(A)$ for all $r>0$, $A\subset\R^{2m}$;
\item $c(B^{2n})=\pi=c(Z^{2m}(1))$.
\end{enumerate}
If, instead of (iv), $c$ satisfies the weaker assumption
\begin{itemize}
\item[\textit{(iv)'}] $0<c(B^{2m})$ and $c(Z^{2m}(1))<+\infty$,
\end{itemize}
then $c$ is a called a generalized capacity.
\end{dfn}
\begin{rmk}
Observe that (iv) or (iv)' are the crucial properties that enable to detect symplectic phenomena not coming from volume-preserving ones. Indeed, the function $\mathrm{vol}(A)^{\frac{1}{m}}$ satisfies (i), (ii), (iii), is positive on $B^{2m}$ but it is not finite on $Z^{2m}(1)$. 
\end{rmk}
The existence of a capacity is equivalent to Gromov's non-squeezing theorem. Indeed, if $c$ is a capacity and $\varphi(B^{2m})\subset Z^{2m}(r)$, we get
\[
\pi r^2=r^2c(Z^{2m}(1))=c(Z^{2m}(r))\geq c(\varphi(B^{2m}))=c(B^{2m})=\pi,
\]
which implies $r\geq 1$. On the other hand, if Gromov's non-squeezing hold, then the following two quantities are capacities
\begin{align*}
c_{B}(A):&=\sup\{\pi r^2\ |\ \varphi(B^{2m}(r))\subset A,\ \text{for some }\varphi\in\mathrm{Symp}(\R^{2m})\},\\
c_{Z}(A):&=\inf\{\pi r^2\ |\ \varphi(A)\subset Z^{2m}(r),\ \text{for some }\varphi\in\mathrm{Symp}(\R^{2m})\}.
\end{align*}
The first one can be though as a sort of symplectic injectivity radius while the second as a sort of indeterminacy principle of classical mechanics: $c_Z(A)$ gives us a lower bound on how precise the coordinates $q_1$ and $p_1$ of elements of $A$ can be determined, no matter which symplectic coordinates we use.
 
Furthermore, if $c$ is any capacity, there holds
\begin{equation*}
c_{B}\leq c\leq c_Z.
\end{equation*}
\begin{ex}
Show that $c_B\leq c\leq c_Z$ and that $c_B$ and $c_Z$ are capacities using Gromov's non-squeezing theorem.
\end{ex}
Already from the rules in the definition, we can compute the capacity of the region bounded by an ellipsoid. Up to symplectomorphisms \cite{HZ11}, each such region can be written as
\begin{equation}\label{e:ell}
E(a_1,\ldots,a_m):=\Big\{\sum_{i=1}^m\frac{q_i^2+p_i^2}{a_i}\leq 1 \Big\}
\end{equation}
for some positive numbers $a_1\leq \ldots\leq a_m$. 
\begin{ex}\label{e:harm}
Show that the sublevel set of the harmonic oscillator 
\[
\Big\{\frac12|p|^2+\frac12\sum_{i=1}^m\frac{1}{a_i^2}q_i^2\leq \frac12\Big\}
\]
is symplectomorphic to $E(a_1,\ldots,a_m)$.
\end{ex}
By definition, $B^{2m}(\sqrt{a_1})\subset E(a_1,\ldots,a_m)\subset Z(\sqrt{a_1})$ and therefore $E(a_1,\ldots,a_m)=\pi a_1$. We conclude that all capacities coincide on ellipsoids. This is a hint of a far-reaching conjecture about convex bodies like $E(a_1,\ldots,a_m)$. Recall that a convex body is a compact convex subset of $\R^{2m}$ with non-empty interior. 
\begin{con}\label{con:con}
All capacities coincide on convex bodies.
\end{con}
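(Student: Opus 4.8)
The plan is as follows. Since every capacity $c$ satisfies $c_B\leq c\leq c_Z$ (by the exercise above), and $c_B\leq c_Z$ always, the conjecture is equivalent to the single statement that $c_B(K)=c_Z(K)$ for every convex body $K$. Unwinding the definitions, this amounts to producing, for every $\varepsilon>0$, either a symplectomorphism that squeezes $K$ into the cylinder $Z^{2m}\big((1+\varepsilon)r\big)$ with $\pi r^2=c_B(K)$, or a symplectomorphism that fits a ball $B^{2m}\big((1-\varepsilon)r\big)$ into $K$ with $\pi r^2=c_Z(K)$. So the whole problem is one of constructing explicit symplectic embeddings, and the non-squeezing theorem guarantees we cannot hope for more than this bound.

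The first concrete step is to bring in the symplectic ``weak systole'' of $\partial K$. By the variational work of Clarke and Ekeland, for a smooth convex body the Hofer--Zehnder and (first) Ekeland--Hofer capacities coincide, and equal
\[
c_{\mathrm{EHZ}}(K)=\min\Big\{\,\oint_{\gamma}p\,\dd q\ \Big|\ \gamma \text{ is a closed characteristic on } \partial K\,\Big\},
\]
the minimal action of a closed Reeb-type orbit on the boundary --- exactly the symplectic counterpart of the weak systole $\widetilde{\mathrm{sys}}(g)$ of a Riemannian metric. Since $c_{\mathrm{EHZ}}$ is itself a capacity, it is automatically sandwiched, $c_B(K)\leq c_{\mathrm{EHZ}}(K)\leq c_Z(K)$, so the conjecture reduces to showing that the single number $c_{\mathrm{EHZ}}(K)$ simultaneously bounds the largest ball inside $K$ from below and the thinnest cylinder containing $K$ from above.

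To get these embeddings I would look for a normal form around the action-minimizing characteristic $\gamma_0\subset\partial K$: in a symplectic neighbourhood of $\gamma_0$ one tries to straighten $\partial K$ so that $(q_1,p_1)$ parametrize a disc of area $c_{\mathrm{EHZ}}(K)$ and the characteristic foliation becomes standard, and then to propagate this normal form across all of $K$ by following the Hamiltonian flow of a function whose level sets foliate $K$ --- the convex-body analogue of foliating the flat torus by horizontal geodesics in Loewner's theorem (cf.\ Remark~\ref{r:important}). A second, embedding-free route is to establish the sharp volume inequality $c_{\mathrm{EHZ}}(K)^m\leq m!\,\vol_{\!\R^{2m}}(K)$, with equality only for symplectic images of balls --- e.g.\ by a Fourier/functional-analytic analysis of the action functional on $\partial K$ --- and then deduce that all capacities coincide.

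The main obstacle is that this is one of the central open problems of symplectic geometry, and I do not expect any of these steps to go through easily. No global normal form of the above type is known, and the flexibility of symplectomorphisms --- the very feature that makes the local systolic results near Zoll metrics work --- here cuts against us: it is hard to exclude that some exotic choice of symplectic coordinates beats the characteristic foliation. Worse, Artstein-Avidan, Karasev and Ostrover showed that even the volume version of the conjecture implies the Mahler conjecture on the volume product of a convex body and its polar, so any complete proof must in particular resolve a famously resistant problem in convex geometry; a proof of the strong form stated here would be a major breakthrough rather than a routine argument.
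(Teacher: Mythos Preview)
The statement you were asked to prove is labeled \textbf{Conjecture} in the paper, not Theorem, and the paper does \emph{not} supply a proof. It is presented as an open problem, with the only supporting evidence being the computation for ellipsoids and Ostrover's observation that $c_B(A)=c_Z(A)$ for convex bodies invariant under the diagonal $S^1$-action on $\C^m$. So there is no ``paper's own proof'' against which to compare your attempt.

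Your write-up correctly recognizes this: you reduce the statement to $c_B=c_Z$ on convex bodies, identify the Ekeland--Hofer--Zehnder action capacity as the natural intermediate quantity, sketch two possible strategies (a global normal form near an action-minimizing characteristic, or a direct proof of the sharp volume inequality), and then acknowledge that neither is known to work and that even the weaker volume form implies the Mahler conjecture. That is an accurate and well-informed summary of the state of the art, but it is not a proof, and you say so yourself. In short, your proposal is not wrong --- it simply (and honestly) does not prove the conjecture, and neither does the paper.
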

A further evidence in favour of Conjecture \ref{con:con} is the nice observation by Ostrover (see \cite{GHR20}) that $c_B(A)=c_Z(A)$ if $A$ is convex and, after the identification $\R^{2m}=\C^m$ given in Exercise \ref{ex:un}, invariant under multiplication by complex numbers $\mu$ of norm one: $(z_1,\ldots,z_m)\to(\mu z_1,\ldots,\mu z_m)$. 
\begin{ex}
Prove the observation by Ostrover. Hint: consider a ball $B^{2m}(r)$ centered at $0$ of maximal radius $r$ contained in $A$. Then, up to acting with an element of $U(m)$ which commutes with multiplication by complex numbers and preserves the symplectic form, we can suppose that $B^{2m}(r)$ is tangent to $A$ at the point $(r,0,\ldots,0)$. Deduce that $A\subset Z^{2m}(r)$ using that $A$ is convex and invariant under multiplication by complex numbers of norm one. Conclude that $c(A)=\pi r^2$.
\end{ex}
\subsection{A symplectic systolic ratio}
In general, computing a capacity of a convex body is a difficult task. Thus, one could try to estimate the capacity of a convex body with some other notion of size which is easier to compute and which is still invariant under symplectomorphisms. From the discussion around Gromov's non-squeezing, we see that the volume $\vol_{\R^{2m}}(A)$ is a natural candidate.
\begin{que}
Let $c$ be a capacity. Does there exist a positive constant $C>0$ such that for all convex bodies $A\subset\R^{2m}$
\[
\rho_c(A):=\frac{c(A)^m}{m!\vol_{\!\R^{2m}}(A)}\leq C?
\]
If yes, what is the optimal constant $C$? What are the convex bodies maximizing $\rho_c$?
\end{que}
If $A=E(a_1,\ldots,a_m)$, then
\[
\rho_c(A)=\frac{(\pi a_1)^m}{(\pi a_1)\cdot\ldots\cdot(\pi a_n)}\leq 1,
\]
with equality if and only if the $a_i$ are all equal, namely $A$ is a ball. Let us take now an arbitrary convex body $A$. We can consider its John--Loewner ellipsoid $E$ of $A$, namely the ellipsoid of minimal volume containing $A$. Since $A$ is convex and $E$ is of minimal volume, we expect the volumes of $A$ and $E$ not too be much different and indeed by \cite{Vit00}
\begin{equation}\label{e:loewner}
\vol_{\R^{2m}}(E)\leq (4m)^m\vol_{\R^{2m}}(A).
\end{equation}
With this information, one gets
\begin{equation}\label{e:4n}
\rho_c(A)\leq (4m)^m
\end{equation}
which implies that $\rho_c$ is bounded for convex bodies in $\R^{2m}$.
\begin{ex}
Prove inequality \eqref{e:4n} assuming \eqref{e:loewner}.
\end{ex}
This estimate was first proved by Viterbo in \cite{Vit00}. It motivated him to pose the following conjecture.
\begin{con}[Strong Viterbo conjecture \cite{Vit00}]
Let $c$ be a capacity. For all convex bodies $A$ there holds
\[
\rho_c(A)\leq 1\quad\text{and}\quad\Big(\rho_c(A)=1\iff \exists\,r>0,\, \varphi\in\mathrm{Symp}(\R^{2m}),  \ \varphi(A)=B^{2m}(r)\Big).
\]
\end{con}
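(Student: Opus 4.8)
The plan is to reduce both halves of the statement --- the universality of the common value $c(A)$ and the sharp bound $\rho_c(A)\le 1$ --- to a single, dynamically transparent capacity, and then to imitate, in this symplectic setting, the three-step mechanism of Loewner's proof recorded in Remark~\ref{r:important}. First one shows that on convex bodies all capacities agree with the Ekeland--Hofer--Zehnder capacity $c_{\mathrm{EHZ}}$; equivalently, using the inequality $c_B\le c\le c_Z$ established above (valid for every capacity), it suffices to prove $c_B(A)=c_Z(A)$ for each convex body $A$, which is precisely Conjecture~\ref{con:con}. The point of singling out $c_{\mathrm{EHZ}}$ is that, for a smooth convex body $A$, it is a standard fact that $c_{\mathrm{EHZ}}(A)$ equals the minimal action $A_{\min}(\partial A)$ of a closed characteristic of $\omega_0|_{T\partial A}$ on the boundary hypersurface, i.e.\ the shortest period of a Reeb orbit for the restriction to $\partial A$ of the Liouville form $\lambda_0=\tfrac12\sum_{j=1}^m(p_j\,\dd q_j-q_j\,\dd p_j)$. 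Thus $A_{\min}(\partial A)$ plays exactly the role of $\mathrm{sys}(g)$: it is a \emph{symplectic systole} of the hypersurface, and the conjectured inequality $c_{\mathrm{EHZ}}(A)^m\le m!\,\vol_{\!\R^{2m}}(A)$ becomes the assertion that, among convex bodies of fixed volume, the round ball maximizes the action of the shortest closed characteristic on its boundary --- the verbatim symplectic analogue of the Riemannian systolic inequality.

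With this dictionary in place I would attack the inequality by the normal-form-plus-averaging scheme of Remark~\ref{r:important}, which the text indicates also governs the contact case treated in Section~\ref{ss:final}. Step one (normal form): act by a symplectomorphism to bring $\partial A$ into a good position --- the analogue of the uniformization step --- for instance arranging that a distinguished one-parameter family $\{\gamma_t\}$ of closed characteristics sweeps out $\partial A$; here one can at least use that a convex $A$ is star-shaped, so that $\partial A$ is a graph over the sphere. Step two (averaging): compute $\vol_{\!\R^{2m}}(A)$, or the symplectic volume of a suitable slice, by integrating over the family $\{\gamma_t\}$, and then run the chain Minimum $\le$ Arithmetic Mean $\le$ Quadratic Mean exactly as in Loewner's computation of $\vol_{\!g}(T^2)$, obtaining $\vol_{\!\R^{2m}}(A)\ge \mathrm{const}\cdot A_{\min}(\partial A)^m$ with the constant pinned down by evaluating everything on the model ball. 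This is the step in which the factor should improve from the crude $(4m)^m$ of \eqref{e:4n} --- which comes only from the John--Loewner volume estimate \eqref{e:loewner} --- all the way down to $1$.

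For the equality case the argument should parallel Exercise~\ref{ex:all}: if $\rho_c(A)=1$ then no region of $\partial A$ can be wasted, so a minimal closed characteristic must pass through every boundary point; hence the characteristic foliation of $\partial A$ consists entirely of systoles, i.e.\ $\partial A$ is ``Zoll'' as a contact-type hypersurface, with periodic Reeb flow of constant period. One then invokes a rigidity statement --- the convex analogue of Weinstein's and Green's theorems --- to the effect that a convex hypersurface of $\R^{2m}$ with periodic Reeb flow of constant period is, up to a symplectomorphism and a dilation, the boundary of a ball; this gives the stated characterization $\rho_c(A)=1\iff\varphi(A)=B^{2m}(r)$.

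The main obstacle is Step one, and it is a genuine one: convexity is \emph{not} preserved by symplectomorphisms, so there is no Darboux-type normal form adapted to the problem, and the ample group $\mathrm{Symp}(\R^{2m})$ that makes $\rho_c$ a symplectic invariant does not obviously furnish the rigid reference configuration that the uniformization theorem furnished on $T^2$. In the absence of such a normal form the only bound available in general is \eqref{e:4n}, which loses an exponentially large factor, and even the universality statement $c_B=c_Z$ on convex bodies (Conjecture~\ref{con:con}) is open. Moreover, for Lagrangian product bodies $K\times T\subset\R^m\times\R^m$ the capacity $c_{\mathrm{EHZ}}(K\times T)$ computes the length of the shortest closed $(K,T)$-billiard trajectory, and the resulting special case of the inequality is equivalent to Mahler's conjecture in convex geometry --- so a complete proof is at least as hard as that notorious problem. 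Realistically I would therefore not expect a uniform argument and would first try to run the scheme above in the cases where a normal form is available: bodies with large symmetry groups (cf.\ Ostrover's observation following Conjecture~\ref{con:con}), Lagrangian products, and low dimensions, where the averaging estimate together with the equality analysis should already be within reach.
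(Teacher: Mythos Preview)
The statement you are attempting is a \emph{conjecture}, not a theorem: the paper states it, gives evidence for it, and then says explicitly ``The strong Viterbo conjecture is still open today''. There is no proof in the paper to compare against. What the paper \emph{does} prove is the local version near the round ball (Theorem~\ref{t:wV}, deduced from Theorem~\ref{t:final}), and that proof indeed follows the normal-form-plus-averaging scheme you describe.

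Your proposal is therefore best read not as a proof but as a strategy sketch, and as such it is sensible and you correctly identify where it breaks. The reduction to a single dynamical capacity via Conjecture~\ref{con:con} is exactly what the paper points out (``Conjecture~\ref{con:con} implies $\rho_c(A)\le 1$ for all convex bodies $A$''), but that conjecture is itself open. Your Step one --- producing a global symplectic normal form for an arbitrary convex boundary --- is precisely the missing ingredient, and you rightly flag that convexity is not symplectically invariant, so no Darboux-type statement is available; this is why the paper can only establish the inequality $C^3$-close to the ball, where a genuine normal form (the one alluded to in Section~\ref{ss:final}, from \cite{AB19}) exists. Your equality-case argument also hides a gap: the assertion that a convex hypersurface in $\R^{2m}$ with periodic Reeb flow of common period must be symplectomorphic to a round sphere is not a known theorem in general and is essentially part of what the conjecture claims.

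In short: your outline matches the mechanism the paper uses \emph{locally}, and your own final paragraph already diagnoses why it does not yield a global proof. That honest assessment is the correct conclusion --- there is no complete argument here, because none is currently known.
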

Further evidence for the conjecture is that $\rho_{c_B}(A)\leq 1$ by the definition of $c_B$. However, knowing $\rho_{c_B}(A)=1$ only implies that an arbitrarily large portion of $A$ is symplectomorphic to a ball and one cannot conclude that the whole $A$ is symplectomorphic to a ball. In particular, Conjecture \ref{con:con} implies $\rho_c(A)\leq 1$ for all convex bodies $A$ (and all capacities $c$). The strong Viterbo conjecture is still open today but the constant $4m$ appearing in \eqref{e:4n} has been improved to a dimension-independent constant.
\begin{thm}[Artstein-Avidan--Milman--Ostrover 2008 \cite{AMO08}]\label{t:vitbounded}
There exists $C>0$ such that for all $m$ and for all capacities $c$ on $\R^{2m}$, there holds
\[
\rho_c(A)\leq C^m,\qquad \text{for all convex bodies }A\subset\R^{2m}.
\]
\end{thm}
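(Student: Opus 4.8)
The plan is to re-run the argument behind Viterbo's estimate \eqref{e:4n}, but to replace the John--Löwner ellipsoid---whose volume distortion \eqref{e:loewner} is responsible for the factor $(4m)^m$---by a far more efficient ellipsoidal approximation coming from asymptotic convex geometry, an $M$-ellipsoid of the body (in the sense of V.~Milman). Since every capacity $c$ satisfies $c\le c_Z$ and all capacities agree on ellipsoids, it suffices to produce an absolute constant $C$ with $\rho_{c_Z}(A)\le C^m$ for every convex body $A\subset\R^{2m}$.

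First I would reduce to centrally symmetric bodies. For any $a_0\in A$ one has $A-a_0\subset A-A$, so by translation invariance and monotonicity $c_Z(A)=c_Z(A-a_0)\le c_Z(A-A)$, while the Brunn--Minkowski inequality gives $\vol(A-A)\ge 2^{2m}\vol(A)$. Hence $\rho_{c_Z}(A)\le 4^m\,\rho_{c_Z}(A-A)$ and, $A-A$ being a symmetric convex body, I may assume from now on that $A$ itself is symmetric.

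Next, fix an $M$-ellipsoid $E$ of $A$: by Milman's theorem $E$ is an ellipsoid with $\vol(E)=\vol(A)$ such that, for an absolute constant $\kappa$, the covering numbers (minimal number of translates) satisfy $N(A,tE)\le e^{\kappa m/t}$ and $N(tE,A)\le e^{\kappa m/t}$ for every $t\ge 1$, and which moreover satisfies the reverse Brunn--Minkowski inequality $\vol(A+E)\le C_0^m\vol(A)$ with $C_0$ absolute. By the symplectic normal form \eqref{e:ell} there is a linear symplectic map $S$ with $S(E)=E(a_1,\dots,a_m)$, $0<a_1\le\cdots\le a_m$. Since $S$ preserves $\omega_0$ and the volume, $c_Z(A)=c_Z(S(A))$ and $m!\,\vol(A)=m!\,\vol(E)=\pi^m a_1\cdots a_m$, and recall that, all capacities agreeing on ellipsoids, $c_Z\big(E(a_1,\dots,a_m)\big)=\pi a_1$. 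Thus everything reduces to the estimate
\[
c_Z(A)\ \le\ C\,\pi\,(a_1\cdots a_m)^{1/m},
\]
because raising it to the $m$-th power and dividing by $m!\,\vol(A)=\pi^m a_1\cdots a_m$ turns it into $\rho_{c_Z}(A)\le C^m$.

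The hard part---and the reason one needs a genuine theorem from asymptotic geometric analysis rather than just John's theorem---is precisely this last estimate. A crude use of the $M$-ellipsoid is useless: from $N(A,E)\le e^{\kappa m}$ one only gets that $S(A)$ lies in a dilate $\lambda\cdot E(a_1,\dots,a_m)$ with $\lambda\le e^{\kappa m}$, hence $c_Z(S(A))\le\lambda^2\pi a_1$, but $(\lambda^2)^m=e^{2\kappa m^2}$ is incomparably larger than any $C^m$: a capacity is homogeneous of degree two in lengths, so a naive covering argument pays the covering number \emph{twice}, and then once more through the $m$-th power. To win, one must instead exploit the full strength of the $M$-ellipsoid---its covering estimates at \emph{every} scale $t$ (in particular at $t$ of order $m$, where $N(A,tE)$ becomes bounded by an absolute constant) together with the reverse Brunn--Minkowski inequality---so that the degree-two homogeneity of $c_Z$ absorbs all but a single dimension-free factor. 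Carrying out this bookkeeping carefully, matching the degree-two homogeneity of symplectic capacities against the $2m$-homogeneity of volume and the $e^{O(m)}$-size covering estimates of the $M$-ellipsoid exactly as Artstein-Avidan, Milman and Ostrover do, is where the whole difficulty sits; John's theorem, with its unavoidable $\sqrt{2m}$ distortion, would at best reproduce the bound \eqref{e:4n}.
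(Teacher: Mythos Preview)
The paper does not prove this theorem; it is simply quoted from \cite{AMO08}. So there is no argument in the text against which to compare yours---what is being asked is whether your outline constitutes a proof. It does not, for two reasons.

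First, your reduction to symmetric bodies is miswired. From $A-a_0\subset A-A$ you correctly get $c_Z(A)\le c_Z(A-A)$, but then you need an \emph{upper} bound on $\vol(A-A)/\vol(A)$ to conclude $\rho_{c_Z}(A)\le C^m\rho_{c_Z}(A-A)$, since
\[
\rho_{c_Z}(A)=\frac{c_Z(A)^m}{m!\,\vol(A)}\le\frac{c_Z(A-A)^m}{m!\,\vol(A)}=\rho_{c_Z}(A-A)\cdot\frac{\vol(A-A)}{\vol(A)}.
\]
Brunn--Minkowski gives only the lower bound $\vol(A-A)\ge 2^{2m}\vol(A)$, which is useless here; what you need is the Rogers--Shephard inequality $\vol(A-A)\le\binom{4m}{2m}\vol(A)\le 16^m\vol(A)$. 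This is easily repaired, but as written the step is wrong.

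Second, and more seriously, the heart of the matter---the inequality $c_Z(A)\le C\pi(a_1\cdots a_m)^{1/m}$---is never established. You explicitly say this ``is where the whole difficulty sits'' and then defer to \cite{AMO08}. The one concrete mechanism you suggest, taking $t$ of order $m$ so that $N(A,tE)$ becomes $O(1)$, actually makes things worse: if $N(A,tE)\le2$ with $t\sim m$ then (by symmetry) $A\subset 2tE$, whence $c_Z(A)\le 4t^2\pi a_1\le 4t^2\pi(a_1\cdots a_m)^{1/m}$, and with $t\sim m$ this yields $\rho_{c_Z}(A)\lesssim m^{2m}$, strictly worse than the John--Loewner bound \eqref{e:4n} you set out to improve. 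The quadratic homogeneity of capacities is exactly what makes a straight inclusion argument too crude, and nothing in your sketch shows how the reverse Brunn--Minkowski or the full entropy profile of the $M$-ellipsoid circumvents this. That is the genuine content of \cite{AMO08}, and it is absent here.
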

\subsection{Periodic orbits of Hamiltonian systems and contact forms}
We mentioned that determining the capacity of a set is a difficult task. For these reasons several capacities have been defined through the years which are easier to compute in specific cases. Beside $c_B$ and $c_Z$, there is an important class of capacities coming from periodic orbits of Hamiltonian systems. In order to introduce it, let us consider an abstract symplectic manifold $(N,\omega)$ and a Hamiltonian function $H:N\to\R$ having compact sublevel sets. Fix a regular energy value $h$ for $H$, so that $S:=\{H=h\}$ is a compact smooth hypersurface. We have seen that the Hamiltonian trajectories on $S$, up to time reparametrization, do not depend on the particular $H$ having $S$ as a level set. If now the form $\omega$ is exact in a neighborhood of $S$, the Maupertuis principle of classical mechanics, periodic orbits of the Hamiltonian flow on $S$ can be described as the critical point of an action functional. To this purpose, let $\lambda$ be a primitive of $\omega$ in a neighborhood of $S$: $\dd\lambda=\omega$. Then, the principle says that a loop $\gamma$ on $S$ with nowhere vanishing velocity is a periodic orbit of the Hamiltonian flow of $H$ up to time reparametrization, if and only if $\gamma$ is a critical point of the action functional
\[
\mathcal A(\gamma):=\int_\gamma\lambda.
\]
As for the length functional in the case of geodesics, we can now define the systole of $S$ by taking the minimum of the action functional among periodic orbits on $S$. The problem is that in general there might be periodic orbits with negative action and the minimum might not be attained and even can be minus infinity. We can remedy to all these problems at once by requiring that the function $\lambda(X_H|_S)\colon S\to\R$ is everywhere positive. Under this condition, there is a way to reparametrize the Hamiltonian flow on $S$ in a way that depends only on the restriction $\alpha:=\lambda|_{TS}$ by considering the flow of the rescaled vector field
\begin{equation}\label{e:rescale}
R_{\alpha}:=\tfrac{1}{\alpha(X_H|_S)}X_H|_S.
\end{equation}
Indeed, the vector field $R_{\alpha}$ can be equivalently defined by the conditions
\[
\alpha(R_{\alpha})=1,\qquad \dd\alpha(R_{\alpha},v)=0,\quad\forall\,v\in TS.
\]
The action $\mathcal A$ of a periodic orbit $\gamma:\R/T\Z\to S_c$ of $R_{\alpha}$ is just the period of $\gamma$:
\[
\mathcal A(\gamma)=\int_\gamma\lambda=\int_0^T\lambda(\dot\gamma)\dd t=\int_0^T\alpha(R_{\alpha})\dd t=\int_0^T1\dd t=T.
\]
On the other hand, if the one-form $\lambda$ is a primitive for $\omega$ on the whole manifold $N$ also the volume of the domain $\{H\leq c\}$ can be written in terms of $\alpha$ by Stokes theorem
\begin{equation}\label{e:vol}
\int_{\{H\leq c\}}\omega^m=\int_{\{H\leq c\}}\dd\Big(\lambda\wedge \omega^{m-1}\Big)=\int_{S}\lambda\wedge \omega^{m-1}=\int_{S}\alpha\wedge (\dd\alpha)^{m-1}.
\end{equation}
We are therefore prompted to give the following abstract definition.
\begin{dfn}
Let $S$ be a smooth compact manifold without boundary of dimension $2m-1$. A one-form $\alpha$ on $S$ is called a contact form if $\ker\dd\alpha$ has dimension one and $\alpha$ is nowhere vanishing on $\ker\dd\alpha$. If $(N,\omega)$ is a symplectic manifold with $S\subset N$ and $\dd\alpha=\omega$ on $S$, then $S$ is called a contact type hypersurface of $N$.
\end{dfn}
\begin{ex}
Show that $\alpha$ is a contact form if and only if $\alpha\wedge\dd\alpha^{m-1}$ is a volume form on $S$, namely it is nowhere vanishing.
\end{ex}
Given a contact form $\alpha$ on an abstract manifold $S$, we can define the so-called Reeb vector field $R_\alpha$ by the relations
\[
\alpha(R_\alpha)=1,\qquad \dd\alpha(R_\alpha,\cdot)=0.
\]
The associated flow is called the Reeb flow. A systole of the contact form is a periodic orbit of the Reeb flow with minimal period which we denote by $T_{\min}(\alpha)$. 
\begin{ex}
Show that $T_{\min}(\alpha)>0$. Hint: show that there is no sequence of periodic Reeb orbits $\gamma_j$ with period $T_j\to 0$. If such a sequence would exist, then $\gamma_j$ converges to some Reeb orbit $\gamma_\infty$ since $S$ is compact. Now observe that since $R_{\alpha}$ has no zeros, there is a neighborhood $U$ of $\gamma_\infty(0)$ and $\tau>0$ such that $\Phi_{R_\alpha}^t(z)\neq z$ for all $z\in U$ and all $t\in(0,\tau)$.
\end{ex}
By definition, $T_{\min}(\alpha)=\infty$ exactly when $R_\alpha$ does not have periodic orbits. However, a central conjecture in symplectic geometry states that such orbits always exist.
\begin{con}[Weinstein 1979 \cite{Wei79}]
Every Reeb flow admits a periodic orbit.
\end{con}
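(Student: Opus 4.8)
The plan is to say in which settings the conjecture can actually be established, since in full generality it remains open; two essentially different circles of ideas are available, and the Riemannian discussion of Talk 1 already supplies a family of examples.

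Indeed, the first class of known cases comes for free from geodesic flows: the unit cotangent bundle $S^*M$ carries the canonical contact form whose Reeb flow is the geodesic flow, so its periodic Reeb orbits are exactly the closed geodesics of $g$, and these exist by Cartan's theorem when $M$ is not simply connected and by the Birkhoff--Lusternik--Fet theorem when it is. The second, broader class consists of contact type hypersurfaces $S$ inside an \emph{exact} symplectic manifold — the model being $S=\partial A$ for a bounded starshaped $A\subset\R^{2n}$ — so that $\alpha=\lambda|_{TS}$ with $\lambda(X_H|_S)>0$ as in the discussion above. Here periodic Reeb orbits are the critical points of $\mathcal A(\gamma)=\int_\gamma\lambda$, but this functional is unbounded below and fails Palais--Smale, so one cannot simply minimize. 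Following Rabinowitz, Weinstein and Viterbo, I would instead choose a Hamiltonian $H$ with $S=\{H=1\}$, homogeneous of degree $2$ outside a compact set, and study the Hamiltonian action $\mathcal A_H(x)=\int_0^1 x^*\lambda-\int_0^1 H(x(t))\,\dd t$ on the loop space of $\R^{2n}$: it has a linking/mountain-pass geometry (negative on small loops, $+\infty$ on large ones, with a positive minimax value), and a compactness argument confining the negative-gradient flow to a bounded region produces a nonconstant critical point, hence a closed Reeb orbit on $S$ whose minimal action is $T_{\min}(\alpha)$ — tying the statement back to the systolic picture and to the Hofer--Zehnder capacity.

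In dimension three ($2m-1=3$) the conjecture is a full theorem of Taubes, obtained by a completely different, gauge-theoretic route. One invokes the isomorphism between embedded contact homology $\mathrm{ECH}(S,\alpha)$ — a Floer-type invariant generated by finite collections of Reeb orbits — and the Seiberg--Witten Floer cohomology of $S$, which is nonzero for every closed oriented $3$-manifold; if $R_\alpha$ had no periodic orbit the $\mathrm{ECH}$ chain complex would be trivial, a contradiction. Analytically this is implemented by perturbing the Seiberg--Witten equations by a large multiple of $\dd\alpha$ and showing the solutions concentrate along Reeb orbits. (Hofer's earlier pseudoholomorphic-curve argument already covers many three-dimensional cases — overtwisted contact structures, $S^3$, and manifolds with $\pi_2\neq 0$ — via a Bishop family of holomorphic discs and a bubbling analysis.)

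The main obstacle is dimension five and higher, where the conjecture is still open: the gauge-theoretic machinery has no known analogue there, and its natural substitute — symplectic field theory / contact homology — rests on delicate transversality and compactness foundations. Partial higher-dimensional results do exist (for instance for contact type hypersurfaces bounding suitable Liouville domains, where the symplectic homology of the filling is infinite-dimensional, or under index-positivity hypotheses), but no general argument is in sight. So the honest ``proof'' is: Cartan and Birkhoff--Lusternik--Fet for geodesic flows, a variational minimax argument whenever $S$ admits an exact ambient filling, and Taubes's Seiberg--Witten argument in dimension three — with the general case being precisely the content of the conjecture.
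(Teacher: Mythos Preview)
The statement is a \emph{conjecture}, and the paper does not prove it; immediately after stating it, the paper only remarks that it is still open and lists three settled cases: dimension three (Taubes), hypersurfaces in $\R^{2m}$ (Viterbo), and hypersurfaces in $T^*M$ bounding a region containing the zero section (Hofer--Viterbo). Your proposal correctly recognizes that no general proof exists and surveys the known partial results, which is exactly the right response here.

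Your discussion is in fact more expansive than the paper's: you cover the same three classes (geodesic flows being a subcase of the $T^*M$ result, the variational approach for starshaped domains in $\R^{2n}$, and Taubes in dimension three) and add useful context on the mechanisms behind them --- the minimax geometry of the Hamiltonian action, the ECH/Seiberg--Witten isomorphism, Hofer's bubbling argument, and the obstacles in higher dimension. None of this is wrong, and it matches the spirit of the paper's remark. The only caution is one of framing: this is not a proof proposal but a status report, so you should present it as such rather than as ``the plan is to prove\ldots''.
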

\begin{rmk}
The Weinstein conjecture is still open but has been proven in many important cases. For instance when $S$ has dimension $3$ \cite{Tau07}, when $S$ is contained in $\R^{2m}$ \cite{Vit87} or when $S$ is contained in $T^*M$ and bounds a region containing the zero section in its interior \cite{HV88}. 	
\end{rmk}
The volume of the contact form is given by
\[
\vol_{\!\alpha}(S)=\int_S\alpha\wedge(\dd\alpha)^{m-1}.
\]
Observe that for every $b>0$, $T_{\min}(b\alpha)=bT_{\min}(\alpha)$ and that $\vol_{\!b\alpha}(S)=b^m\vol_{\!\alpha}(S)$. Thus, we define the systolic ratio as
\[
\rho(\alpha)=\frac{T_{\min}(\alpha)^m}{\vol_{\!\alpha}(S)}.
\]
The group of diffeomorphisms $\Psi:S\to S$ naturally acts on a contact form $\alpha$ by pull-back and yields trivially a new contact form $\Psi^*\alpha$ with same minimal period, volume and, hence, systolic ratio
\begin{equation}\label{e:tvrho}
T_{\min}(\Psi^*\alpha)=T_{\min}(\alpha),\quad \vol_{\!\Psi^*\alpha}(S)=\vol_{\!\alpha}(S),\quad \rho(\Psi^*\alpha)=\rho(\alpha).
\end{equation}
How many contact forms, up to diffeomorphism, do we have on a manifold? If $\alpha$ is a contact form, then for every $\psi\colon S\to(0,\infty)$ positive, the one-form $\psi\alpha$ is also a contact form, which is in general not related to $\alpha$ by a diffeomorphism or a rescaling. Notice, indeed, that the dynamics of the Reeb vector fields of $\psi\alpha$ and of $\alpha$ are very different to each other. On the other hand, $\ker\alpha=\ker(\psi\alpha)$ is a co-oriented hyperplane distribution not depending on $\psi$.
\begin{dfn}
A contact structure on $S$ is a co-oriented hyperplane distribution $\xi\subset TS$ such that one (equivalently each) element in $\mathcal C(\xi):=\{\alpha\ |\ \ker\alpha=\xi\}$ is a contact form. The pair $(S,\xi)$ is called a contact manifold.
\end{dfn}
Following the pioneering paper by \'Alvarez-Paiva and Balacheff \cite{AB14}, doing systolic geometry on the contact manifold $(S,\xi)$ means studying the behaviour of the systolic ratio on the set of supporting contact forms
\[
\rho:\mathcal C(\xi)\to(0,\infty].
\]
Contrary to the Riemannian (weak) systolic ratio, the contact one is never bounded.
\begin{thm}[ABHS 2016 \cite{ABHS19} in dimension 3; Saglam 2019 \cite{Sag18} in general]\label{t:unbounded}
For every contact manifold $(S,\xi)$ of dimension at least $3$, the systolic ratio is unbounded on $\mathcal C(\xi)$.
\end{thm}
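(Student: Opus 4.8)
The plan is to show that for every $C>0$ there is a contact form $\alpha\in\mathcal C(\xi)$ with $\rho(\alpha)>C$, obtained from an arbitrary reference form by changing only its conformal factor. Two elementary remarks organise the construction. If $\alpha=f\alpha_0$ with $f\colon S\to(0,\infty)$, then $\alpha\in\mathcal C(\xi)$ and, since $\alpha_0\wedge\alpha_0=0$, one computes $\alpha\wedge\dd\alpha^{m-1}=f^m\,\alpha_0\wedge\dd\alpha_0^{m-1}$, so $\vol_{\!\alpha}(S)=\int_S f^m\,\alpha_0\wedge\dd\alpha_0^{m-1}$; in particular making $f$ large only on a set of small $\alpha_0$-volume costs almost nothing in volume. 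Moreover, on any open set where $f$ is constant the Reeb flow of $f\alpha_0$ is a time-reparametrisation of that of $\alpha_0$ (indeed $R_{\Lambda\alpha_0}=\tfrac1\Lambda R_{\alpha_0}$); hence if $f\equiv\Lambda$ on a neighbourhood of a closed $\alpha_0$-Reeb orbit $\gamma$, then $\gamma$ remains a closed Reeb orbit of $f\alpha_0$, with period multiplied by $\Lambda$.

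\emph{Step 1 (reduce to finitely many short orbits).} After replacing $\alpha_0$ by a nearby form in $\mathcal C(\xi)$, I may assume that all closed Reeb orbits of $\alpha_0$ are nondegenerate, hence isolated; then for each fixed $\ell>0$ there are only finitely many closed Reeb orbits $\gamma_1,\dots,\gamma_N$ of period $<\ell$ (iterates included), by compactness of the set of Reeb orbits of bounded period. Write $V_0:=\vol_{\!\alpha_0}(S)$.

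\emph{Step 2 (lengthen each short orbit by a thin slow-down collar).} Choose pairwise disjoint nested tubular neighbourhoods $V_i\subset U_i$ of the $\gamma_i$ and a function $f$ that equals the constant $\Lambda_i:=2\ell/T(\gamma_i)$ on $V_i$, equals $1$ outside $\bigcup_iU_i$, and interpolates monotonically on each $U_i\setminus V_i$. By the second remark each $\gamma_i$ survives as a Reeb orbit of $\alpha:=f\alpha_0$ of period $\ge 2\ell>\ell$, and by the first remark taking the $U_i$ thin enough makes $\vol_{\!\alpha}(S)\le V_0+1$. If one can show that \emph{every} closed Reeb orbit of $\alpha$ of period $<\ell$ is among the (now lengthened) $\gamma_i$'s — equivalently, that no new short closed orbit has appeared inside the collars $U_i\setminus V_i$ — then $T_{\min}(\alpha)\ge\ell$ and hence $\rho(\alpha)\ge\ell^m/(V_0+1)$, which exceeds $C$ once $\ell$ is large.

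\emph{The main obstacle} is exactly this last verification. On a collar $U_i\setminus V_i$ the conformal factor is genuinely non-constant, so there the Reeb vector field of $f\alpha_0$ differs from that of $\alpha_0$ not merely by reparametrisation but by a drift tangent to $\xi$ governed by $\dd f$, and one must rule out that such a collar contains a \emph{new} closed Reeb orbit of period $<\ell$. The tension is that shrinking the collar to keep the volume small forces $\dd f$, and therefore the drift, to be large, so the modification cannot be treated as a small perturbation. Controlling this trade-off — and doing so via a model valid in every odd dimension, where the relevant normal forms near a closed orbit are more involved than in dimension three — is the technical heart of the proof; in dimension three it is carried out in \cite{ABHS19}, and the extension to all dimensions is \cite{Sag18}. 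An alternative, essentially equivalent, route replaces the collar by an explicit ``Reeb plug'' modelled on a linear flow of irrational slope on $\T^{2}\times[0,1]$ inserted transversally to the ambient dynamics; there the obstruction becomes showing the plug is genuinely contact, harbours no short closed orbit of its own, and is met by every Reeb orbit of the ambient flow.
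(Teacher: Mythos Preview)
The paper does not give a proof of this theorem; it is stated with attribution to \cite{ABHS19} and \cite{Sag18} and then used as a black box (for Corollary~\ref{c:unboundedcot} and the discussion around $c_{SH}$). So there is no argument in the paper to compare your sketch against.

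As for the sketch itself: the preliminary computations are correct (in particular $\alpha\wedge\dd\alpha^{m-1}=f^m\,\alpha_0\wedge\dd\alpha_0^{m-1}$, so conformal inflation on a thin set costs little volume), and you correctly isolate the essential difficulty, namely that shrinking the collars to control the volume forces $|\dd f|$ --- and hence the $\xi$-component of $R_{f\alpha_0}$ --- to blow up, so the modification is never perturbative. But your main Step~2 strategy (lengthen each of the finitely many short orbits of a nondegenerate $\alpha_0$ by a local slow-down) is not the route taken in \cite{ABHS19,Sag18}; it is not clear this approach can be completed, precisely because the number $N$ of short orbits and the slow-down factors $\Lambda_i$ both grow with $\ell$, while the collars must simultaneously shrink. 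The actual constructions are closer to what you call the ``alternative route'' at the end: rather than hunting down and modifying existing short orbits, one works inside a single embedded contact chart and builds an explicit contact form there --- in dimension three via an area-preserving disc map serving as first-return map to a surface of section, in higher dimensions via a suitable generalisation --- which matches the ambient form near the boundary, contains no short closed orbits, and forces every traversing orbit to accumulate a large amount of action. So your final paragraph is a more faithful summary of what is done than the scheme in Steps~1--2.
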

Still contact systolic geometry has strong connections with the capacity of convex subsets of $\R^{2m}$ and to Riemannian systolic geometry. To uncover these connections, let us look at examples of contact manifolds in $\R^{2m}$ and in $T^*M$.

\subsection{Starshaped hypersurfaces in euclidean space} In $\R^{2m}$ we have a global primitive for the symplectic form. At a point $z\in\R^{2m}$ is defined as
\[
\lambda_0:=\frac{1}{2}\sum_{j=1}^m(p_j\dd q_j-q_j\dd p_j)=\frac{1}{2}\omega_0(z,\cdot).
\]
Thus, for every Hamiltonian $H$, there holds
\begin{equation}\label{e:z}
\lambda_0(X_H)=\tfrac12\omega_0(z,X_H)=\tfrac12\dd H\cdot z.
\end{equation}
\noindent\!\begin{minipage}[t]{.5\textwidth}
If $S=\{H=h\}$, we see that $\lambda_0(X_H|_{S})>0$ if and only if the radial vector field $z$ is transverse to $S$. We call hypersurfaces $S$ with this property starshaped (with respect to the origin). We will also refer to the compact domain $A$ bounded by $S$ also as starshaped. Therefore, starshaped hypersurfaces are of contact type with contact form $\alpha_S:=\lambda_0|_{TS}$.
\end{minipage}
\begin{minipage}[t]{.5\textwidth}
\kern-8pt\centering\includegraphics[width=.7\textwidth]{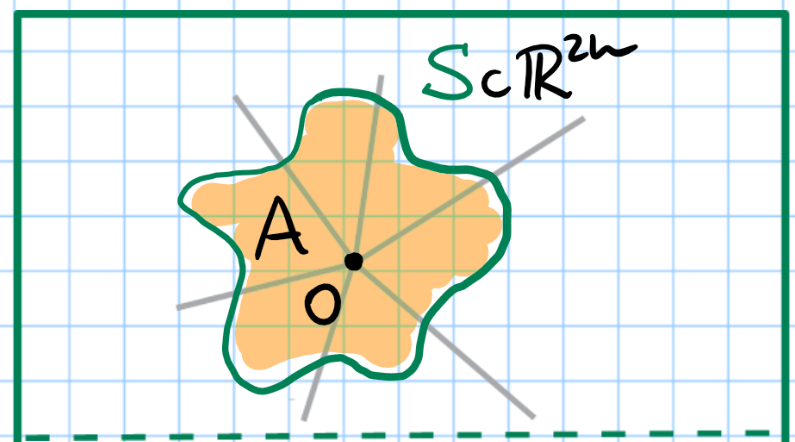}\captionof{figure}{Starshaped domain}
\end{minipage}	\begin{exa}\label{exa:harm}
Suppose that $H:\R^{2n}\to\R$ is two-homogeneous and smooth and positive outside the origin. Then, $S:=H^{-1}(1/2)$ is a starshaped hypersurface and $R_{\alpha_S}=2X_H|_S$. Indeed, using \eqref{e:rescale}, \eqref{e:z} and the Euler theorem for homogeneous functions, we get
\[
\lambda_0(X_H|_S)=\tfrac12\dd H\cdot z=\tfrac122 H|_S=\tfrac12.
\] 
\end{exa}
Starshaped hypersurfaces are in bijection with positive functions $\psi\colon S^{2m-1}\to(0,\infty)$ from the unit sphere. Indeed, if we define
\[
F_\psi:S^{2m-1}\to \R^{2m},\quad F_\psi(z)=\psi(z)z,\qquad S_{\psi}:=F_\psi(S^{2m-1}),
\]
then $S_\psi$ is starshaped and all starshaped hypersurfaces arise in this way. Moreover, there holds
\begin{equation}\label{e:psipsi}
F_\psi^*(\alpha_{S_\psi})=\psi^2\alpha_{S^{2m-1}}.
\end{equation}
Therefore, we also find a bijection between starshaped domains and contact forms in $\mathcal C(\xi_{S^{2m-1}})$, where $\xi_{S^{2m-1}}=\ker\alpha_{S^{2m-1}}$.

Convex bodies containing the origin in the interior are particular examples of starshaped domains. We have the following remarkable property.
\begin{thm}
There exists a capacity $c_{SH}$ such that
\[
c_{SH}(A)\geq T_{\min}(\alpha_{\p A}),\qquad \text{for all starshaped domains }A.
\]
In particular, there exists a periodic orbit on $\p A$. Moreover, 
\[
c_{SH}(A)= T_{\min}(\alpha_{\p A}),\qquad \text{for all convex domains }A.
\]
\end{thm}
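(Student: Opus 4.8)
The plan is to construct $c_{SH}$ as a Hamiltonian-type capacity built out of the action spectrum of starshaped hypersurfaces, following the classical Ekeland--Hofer / Hofer--Zehnder circle of ideas. Concretely, for a starshaped domain $A$ with boundary $S=\p A$, one wants to define $c_{SH}(A)$ essentially as $T_{\min}(\alpha_S)$, the minimal period of a closed Reeb orbit on $(S,\alpha_S)$, and then to check that this prescription (extended to arbitrary subsets of $\R^{2m}$ by the usual monotone envelope $A\mapsto \sup\{\,T_{\min}(\alpha_{\p B})\mid B\subset A,\ B\text{ starshaped}\,\}$, or by a Hofer--Zehnder style minimax over admissible Hamiltonians supported in $A$) satisfies the four capacity axioms (i)--(iv).

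The key steps, in order. First, I would establish existence of a closed Reeb orbit on the boundary of any starshaped domain; since $S\subset\R^{2m}$, this is exactly the case of the Weinstein conjecture proved by Viterbo (cited in the excerpt), so $T_{\min}(\alpha_S)<\infty$ is available. Second, I would define $c_{SH}$ and verify the easy axioms: conformality (iii) is immediate from $T_{\min}(b\alpha)=bT_{\min}(\alpha)$ together with the scaling $\lambda_0\mapsto r^2$-behaviour under $z\mapsto rz$ (the dilation is conformally symplectic with factor $r^2$, which matches $c(rA)=r^2c(A)$), and symplectic invariance (i) follows because a symplectomorphism carries $\lambda_0$ to $\lambda_0$ up to an exact form, hence preserves the action functional and therefore the action spectrum and its minimum; monotonicity (ii) is built into the definition via the envelope. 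Third, the normalization (iv): for the ball $B^{2m}(1)=\{\tfrac12|z|^2\le\tfrac12\}$ the Hamiltonian $H=\tfrac12|z|^2$ is two-homogeneous and, by Example~\ref{exa:harm}, $R_{\alpha_S}=2X_H|_S$, whose orbits are the Hopf circles with period $\pi$; so $T_{\min}(\alpha_{S^{2m-1}})=\pi$, giving $c_{SH}(B^{2m})=\pi$, and for the cylinder $Z^{2m}(1)$ one gets $c_{SH}(Z^{2m}(1))\le\pi$ by exhibiting the Reeb orbit of period $\pi$ in the $q_1p_1$-plane, while $c_{SH}(Z^{2m}(1))\ge\pi$ follows from monotonicity $B^{2m}\subset Z^{2m}(1)$, or alternatively from Gromov non-squeezing. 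Fourth, the inequality $c_{SH}(A)\ge T_{\min}(\alpha_{\p A})$ for starshaped $A$ is then essentially the definition (take $B=A$ in the envelope), and it yields existence of a periodic Reeb orbit on $\p A$ as an immediate corollary. Fifth, for the convex case one must upgrade the inequality to an equality $c_{SH}(A)=T_{\min}(\alpha_{\p A})$: here I would use the Clarke dual action principle, i.e. realize closed characteristics on a convex boundary as critical points of a convex dual functional, show that the minimal action characteristic is a genuine minimax critical value of the Hofer--Zehnder capacity, and conclude that no admissible Hamiltonian supported inside a convex $A$ can carry fast periodic orbits of smaller action — equivalently that $c_{SH}(A)\le T_{\min}(\alpha_{\p A})$ on convex bodies.

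The main obstacle is precisely this last step, the sharp \emph{equality} on convex bodies: the inequality $c_{SH}\ge T_{\min}$ is formal, but proving $c_{SH}(A)\le T_{\min}(\alpha_{\p A})$ for convex $A$ requires showing that the minimal closed characteristic genuinely computes the capacity, which rests on the convex-analytic machinery (Clarke's dual variational principle, the fact that for convex hypersurfaces the Ekeland--Hofer / Hofer--Zehnder capacity equals the minimal action, and a careful argument that one cannot ``cheat'' with a cleverly chosen Hamiltonian supported in the interior). One delicate point here is regularity: the convex body need not be smooth or have a positive-definite second fundamental form, so one first approximates $A$ by smooth strictly convex bodies, proves the equality there, and then passes to the limit using continuity of both $c_{SH}$ and $T_{\min}$ under $C^0$-convergence of convex bodies (monotonicity of $c_{SH}$ makes the capacity side automatic; the subtle side is lower semicontinuity of the minimal period, which uses convexity in an essential way). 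Everything else — existence via Viterbo, the axioms, the normalization — is routine by comparison.
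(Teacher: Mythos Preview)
The paper does not prove this theorem: these are expository lecture notes, and the result is simply quoted as a known fact (the notation $c_{SH}$ points to the symplectic-homology capacity; the equality for convex bodies goes back to Ekeland--Hofer and Hofer--Zehnder). There is no proof in the paper to compare your proposal against.

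That said, your outline has a real gap at the normalization step. You write that $c_{SH}(Z^{2m}(1))\le\pi$ follows ``by exhibiting the Reeb orbit of period $\pi$ in the $q_1p_1$-plane''; it does not, under either of your proposed definitions. With the monotone envelope $c(A)=\sup\{T_{\min}(\alpha_{\p B}):B\subset A\text{ starshaped}\}$, a single short orbit on one hypersurface says nothing about the supremum over \emph{all} starshaped $B\subset Z^{2m}(1)$, and bounding that supremum by $\pi$ is essentially equivalent to already possessing a capacity with the cylinder normalization (note also that, in light of Theorem~\ref{t:unbounded}, there is no obvious a~priori reason why starshaped subdomains of the cylinder should have uniformly bounded $T_{\min}$). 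With the Hofer--Zehnder definition, $c_{HZ}(Z^{2m}(1))\le\pi$ is precisely the hard analytic theorem --- every admissible Hamiltonian on the cylinder with oscillation exceeding $\pi$ carries a fast non-trivial periodic orbit --- and its proof requires the full variational/minimax machinery for the action functional; spotting one closed characteristic is irrelevant to that argument. This cylinder bound is where the substance of constructing any dynamical capacity lies, and your plan treats it as immediate while identifying the convex equality as the ``main obstacle''; in fact the former is at least as deep.

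A smaller issue: your symplectic-invariance argument needs more care, since a symplectomorphism need not carry a starshaped domain to a starshaped domain, so $\alpha_{\p(\varphi(A))}$ may not even be defined. One should instead phrase everything in terms of the action of closed characteristics (the symplectic area of a capping disc), which is a genuine symplectic invariant of the hypersurface, rather than in terms of the Reeb parametrization coming from $\lambda_0$.
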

Combining this theorem with the volume formula \eqref{e:vol}, we see that for all starshaped domains $A$, there holds
\[
\rho_{c_{SH}}(A)\geq\rho(\alpha_{\partial A})
\]
with equality when $A$ is a convex body. Using Theorem \ref{t:unbounded} and the relation \eqref{e:psipsi}, we see that the systolic ratio with respect to the capacity $c_{SH}$ is unbounded on the set of starshaped domains. On the other hand, by Theorem \ref{t:vitbounded}, this systolic ratio is bounded on the set of convex bodies. Going one (big) step further and proving the Viterbo conjecture for the capacity $c_{SH}$ would already be an amazing result and therefore the statement deserves a name of its own.
\begin{con}[Weak Viterbo conjecture]
For all convex bodies $A$ there holds
\[
\rho_{c_{SH}}(A)\leq 1,\quad\text{and}\quad\Big(\rho_{c_{SH}}(A)=1\iff \exists\,r>0,\, \varphi\in\mathrm{Symp}(\R^{2m}),  \ \varphi(A)=B^{2m}(r)\Big).
\]
\end{con}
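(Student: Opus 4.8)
The plan is to move the whole problem onto the boundary sphere. By the theorem just stated, for a convex body $A$ one has $c_{SH}(A)=T_{\min}(\alpha_{\partial A})$, and by the volume formula \eqref{e:vol} one has $m!\,\vol_{\!\R^{2m}}(A)=\vol_{\!\alpha_{\partial A}}(\partial A)$; hence $\rho_{c_{SH}}(A)=\rho(\alpha_{\partial A})$. So the conjecture becomes the contact-systolic inequality $\rho(\alpha)\le 1$ on $S^{2m-1}$, restricted to the contact forms $\alpha=\psi^2\alpha_{S^{2m-1}}$ that arise via \eqref{e:psipsi} from \emph{convex} starshaped domains, together with the rigidity that equality forces $\psi^2\alpha_{S^{2m-1}}$ to be the pull-back of a constant multiple of $\alpha_{S^{2m-1}}$ by a diffeomorphism extending to a symplectomorphism of $\R^{2m}$. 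Observe that $\alpha_{S^{2m-1}}$ itself realises $\rho=1$ and that its Reeb flow is the Hopf flow, every orbit of which is closed of period $\pi$; so the equality case of the conjecture is the contact analogue of the Zoll situation of Remark \ref{r:important}, and this is the structural reason to expect a positive answer.

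First I would reduce the left-hand quantity to a finite-dimensional variational problem. For a convex body, Clarke's dual action principle expresses the closed characteristics on $\partial A$ as critical points of a strictly convex functional built from the gauge (or support) function of $A$, with $T_{\min}(\alpha_{\partial A})$ its minimum; this rewrites $\rho_{c_{SH}}(A)\le 1$ as an isoperimetric-type inequality in the support function $h_A$ and the volume alone. On this convex-geometric side one can try symmetrization arguments — ordinary Steiner symmetrization, or a symmetrization adapted to the $U(m)$-action in the spirit of Ostrover's observation (which already settles the $S^1$-invariant convex bodies, since there $c_B=c_Z$). The goal would be a symmetrization that does not decrease $\rho_{c_{SH}}$ and whose iteration drives $A$ toward a ball, circle-symmetric bodies being the expected extremals.

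Globally, one would then interpolate between two hard inputs already available: the dimension-independent bound $\rho_{c_{SH}}(A)\le C^m$ of Theorem \ref{t:vitbounded}, and a \emph{local} version of the conjecture near the ball — the statement, obtainable by the methods behind Theorems \ref{t:abhs18} and \ref{t:riemzoll}, that $\rho(\alpha)\le 1$ with equality only for the round contact form, for all $\alpha$ sufficiently $C^k$-close to $\alpha_{S^{2m-1}}$. If one had a deformation $A\rightsquigarrow B^{2m}(r)$ through convex bodies along which $\rho_{c_{SH}}$ is monotone non-increasing and which terminates in this local neighbourhood, the global bound would collapse to $1$ and the equality discussion would reduce to the local rigidity. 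Natural candidates are the John--Loewner ellipsoid interpolation or a convex curvature flow. It is wise to test the mechanism first on Lagrangian products $A=K\times T\subset\R^m\times\R^m$, where $c_{SH}(K\times T)$ equals the minimal length of a closed Minkowski billiard trajectory in $K$ with geometry dual to $T$, so that $\rho_{c_{SH}}(A)\le 1$ turns into a Mahler-type volume-product inequality for $K$ and $T^\circ$ — difficult, but a concrete and well-studied target.

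The main obstacle is the absence of a symplectically natural monotone deformation toward the ball: convexity is not a symplectic invariant, convexity-preserving symmetrizations need not preserve $c_{SH}$, and the symplectic moves that would help need not preserve convexity. Equivalently, $c_{SH}$ is computable on ellipsoids, on Lagrangian products, and on $S^1$-symmetric bodies, but for a generic convex body we have only the variational characterisation; controlling the minimum of Clarke's functional against the volume uniformly, with the \emph{sharp} constant $1$ and with the rigidity in the equality case, is the crux. It is quite possible that no such clean deformation exists and that even this weak form of the conjecture needs genuinely new ideas; so a realistic outcome of this plan is to isolate this obstruction precisely and to settle the product and circle-symmetric cases as evidence.
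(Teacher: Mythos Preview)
The statement you are trying to prove is labeled in the paper as a \emph{Conjecture}, not a theorem: the weak Viterbo conjecture is explicitly declared open (``The weak Viterbo conjecture is still open in general but it was recently shown to hold for convex bodies close to the ball''). There is therefore no proof in the paper to compare your proposal against. What the paper does prove is only the local statement (Theorem~\ref{t:wV}), namely the inequality for convex bodies whose boundary is $C^3$-close to the round sphere, as a corollary of the contact-systolic result Theorem~\ref{t:final}.

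Your write-up is not a proof but a research outline, and to your credit you acknowledge this in the final paragraph. Several of your reductions are correct and appear in the paper: the identity $\rho_{c_{SH}}(A)=\rho(\alpha_{\partial A})$ for convex $A$, the transfer to contact forms on $S^{2m-1}$ via \eqref{e:psipsi}, and the local rigidity near the round form. But the central step you propose --- a monotone, convexity-preserving deformation toward the ball along which $\rho_{c_{SH}}$ does not increase --- is precisely the missing idea, and you yourself identify the obstruction: convexity is not symplectically invariant, and no known symmetrization is compatible with $c_{SH}$. So what you have written is a fair survey of the landscape and of partial evidence (circle-invariant bodies, Lagrangian products, the local result), not a proof; please relabel it accordingly and do not present it as settling the conjecture.
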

We mention here two results implied by the weak Viterbo conjecture.

\subsection{Mahler conjecture}
The first one is the Mahler conjecture in convex geometry, see \cite{Tao07} for an interesting introduction. Before giving the statement, let us recall some definitions about convex bodies $K$ in $\R^m$. First, $K$ is called centrally symmetric if $K$ is invariant by the antipodal map $x\mapsto -x$. Second, the polar of $K$ is the convex body
\[
K^o:=\{ y\in\R^m\ |\ y\cdot x\leq 1,\ \forall\,x\in K\},
\]
where $\cdot$ ist the euclidean product.
Moreover, $K^o$ is centrally symmetric if and only if $K$ is centrally symmetric. Centrally symmetric convex bodies are exactly unit balls of norms on $\R^m$. Taking the polar body then corresponds to considering the dual metric. In particular, the euclidean ball is dual to itself $(B^m)^o=B^m$.

Consider now the product of the volume of $K$ and that of $K^o$ as a function $V$ on the space of centrally symmetric convex bodies in $\R^m$:
\[
K\mapsto V(K):=\vol_{\!\R^m}(K)\cdot \vol_{\!\R^m}(K^o)=\vol_{\!\R^{2m}}(K\times K^o).
\]
\begin{ex}
Prove that $V$ is invariant under affine transformations of $K$.
\end{ex}
Is the function $V$ (also known as the Mahler volume) bounded from above or from below? The first question has a neat answer.
\begin{thm}[Blaschke 1917 \cite{Bla17} for $n\leq3$; Santalo 1949 \cite{San49} in general]
If $K\subset\R^m$ is a convex body, there holds $V(K)\leq V(B^m)$ with equality exactly if $K$ is an affine image of $B^m$, namely an ellipsoid.
\end{thm}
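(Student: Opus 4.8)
The plan is to prove the bound $V(K)\le V(B^m)$ by Steiner symmetrization, following Blaschke's original strategy, and I will carry out the centrally symmetric case --- the one for which $V$ was defined, and for which the origin is the canonical centre (the general case goes the same way after translating $K$ to its Santal\'o point and tracking that point along the iteration). Recall that the Steiner symmetral $S_uK$ of a convex body $K\subset\R^m$ in a unit direction $u$ is obtained by replacing each chord of $K$ parallel to $u$ by the segment of the same length centred on the hyperplane $u^\perp$; classically $S_uK$ is again a convex body with $\vol_{\!\R^m}(S_uK)=\vol_{\!\R^m}(K)$, and, since $u^\perp$ contains the origin, it is centrally symmetric whenever $K$ is, and it contains $\delta B^m$ whenever $\delta B^m\subset K$. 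I would then invoke two further ingredients. The first, which is the analytic heart of the matter, is that the polar volume does not decrease under Steiner symmetrization,
\[
\vol_{\!\R^m}\big((S_uK)^o\big)\ \ge\ \vol_{\!\R^m}(K^o),
\]
with equality only if, writing $K=\{(y,t):a(y)\le t\le b(y)\}$ over its orthogonal projection $P\subset\R^{m-1}$ to $u^\perp$ (so $a$ convex, $b$ concave), the midpoint function $y\mapsto\tfrac12(a(y)+b(y))$ is affine. The second is the classical convergence theorem for symmetrizations: there is a sequence of directions $u_1,u_2,\dots$ along which the bodies $K_0:=K$ and $K_n:=S_{u_n}K_{n-1}$ converge, in the Hausdorff metric, to the ball $rB^m$ with $r=(\vol_{\!\R^m}(K)/\vol_{\!\R^m}(B^m))^{1/m}$.

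Granting these, the inequality is immediate. One has $\vol_{\!\R^m}(K_n)=\vol_{\!\R^m}(K)$ for all $n$, and $\vol_{\!\R^m}(K_n^o)$ is nondecreasing in $n$; fixing $\delta>0$ with $\delta B^m\subset K$ one gets $\delta B^m\subset K_n$ for all $n$, so the polars $K_n^o$ all lie in the fixed ball $\delta^{-1}B^m$, and since $K_n\to rB^m$ the polars converge as well, $K_n^o\to r^{-1}B^m$. Hence $V$ is continuous along the sequence and
\[
V(K)=\vol_{\!\R^m}(K)\,\vol_{\!\R^m}(K^o)\ \le\ \vol_{\!\R^m}(K_n)\,\vol_{\!\R^m}(K_n^o)\ \longrightarrow\ \vol_{\!\R^m}(rB^m)\,\vol_{\!\R^m}(r^{-1}B^m)=V(B^m).
\]
For the equality clause I would argue backwards: if $V(K)=V(B^m)$, then the nondecreasing sequence $V(K_n)$ has limit equal to its first term $V(K_0)$, hence is constant, so equality holds in the polar lemma at every step; taking the $u_n$ dense in the unit sphere, the midpoint function of $K$ over $u^\perp$ is affine for a dense --- hence, by continuity, for every --- direction $u$. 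Since a convex body for which the midpoints of all chords in every fixed direction lie in a hyperplane is an ellipsoid, $K$ must be an ellipsoid; conversely every centrally symmetric ellipsoid is a linear image of $B^m$ and $V$ is invariant under linear maps, so $V=V(B^m)$ there.

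The hard part will be the polar monotonicity lemma together with the exact form of its equality case. The cleanest route is the slicing argument of Meyer and Pajor: with $u=e_m$ and $K=\{(y,t):a(y)\le t\le b(y)\}$ over a convex $P\subset\R^{m-1}$, one writes the one-dimensional fibres of $K^o$ over $z$ and over $-z$ explicitly in terms of $a$ and $b$, does the same for $(S_uK)^o$, and verifies the pointwise estimate $|(K^o)_z|+|(K^o)_{-z}|\le|((S_uK)^o)_z|+|((S_uK)^o)_{-z}|$ by a short one-variable convexity inequality; integrating in $z$ and using the symmetry of $(S_uK)^o$ about $u^\perp$ yields the lemma, while recording when the estimate is an equality yields the equality case. (An alternative to the whole scheme is to deduce the functional Blaschke--Santal\'o inequality from the Pr\'ekopa--Leindler inequality and then specialize to indicator functions, but this only relocates the difficulty.) The remaining points --- arranging the limiting argument so that equality for $V$ genuinely propagates to every symmetrization step, and extracting ``ellipsoid'' from the midpoint condition --- are classical, and the rest is bookkeeping.
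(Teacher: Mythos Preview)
The paper does not prove this theorem. It is stated with attribution to Blaschke \cite{Bla17} and Santal\'o \cite{San49} and then used as background for the discussion of the Mahler conjecture; no argument is supplied or sketched. So there is nothing in the paper to compare your proposal against.

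That said, your outline is a standard and essentially correct route to the result. The Steiner-symmetrization scheme with the Meyer--Pajor polar monotonicity lemma is one of the cleanest modern proofs, and your description of the equality analysis via the affine-midpoint condition and the Bertrand--Brunn characterization of ellipsoids is accurate. One small point to tighten: for the equality clause you want the directions $u_n$ to be dense in $S^{m-1}$, but the bare convergence theorem for iterated symmetrizations only guarantees \emph{some} sequence converging to a ball. You should either invoke a version of the convergence theorem that allows a prescribed dense sequence (such sequences exist), or argue that once $V(K)=V(B^m)$ you may symmetrize in \emph{any} direction without changing $V$ and hence test the midpoint condition directly for every $u$. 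Either fix is routine.
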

For the second question there is a guess.
\begin{con}[Mahler 1939 \cite{Mah39}]
Let $K$ be a centrally symmetric convex body. Then,
\[
V(K)\geq V(Q^n)=\frac{4^m}{m!},
\]
where $Q$ is a cube.
\end{con}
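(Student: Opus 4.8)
The plan is to deduce the Mahler inequality from the weak Viterbo conjecture, via the passage from convex bodies in $\R^m$ to Lagrangian products in $\R^{2m}$ due to Artstein-Avidan, Karasev and Ostrover. Given a centrally symmetric convex body $K\subset\R^m$, set
\[
A_K:=K\times K^o\ \subset\ \R^m_q\times\R^m_p=\R^{2m}.
\]
Since $0$ lies in the interior of both $K$ and $K^o$, the set $A_K$ is a convex body with $0$ in its interior, in particular a starshaped domain, and by Fubini
\[
\vol_{\!\R^{2m}}(A_K)=\vol_{\!\R^m}(K)\cdot\vol_{\!\R^m}(K^o)=V(K).
\]
Hence everything reduces to the single computation $c_{SH}(A_K)=4$: granting it, the weak Viterbo inequality $\rho_{c_{SH}}(A_K)\leq 1$ becomes
\[
\frac{4^m}{m!\,V(K)}=\frac{c_{SH}(A_K)^m}{m!\,\vol_{\!\R^{2m}}(A_K)}\leq 1,
\]
that is, $V(K)\geq 4^m/m!=V(Q)$, which is exactly Mahler's conjectured bound.

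So the crux is $c_{SH}(K\times K^o)=4$. Because $A_K$ is convex, $c_{SH}(A_K)=T_{\min}(\alpha_{\partial A_K})$, the least action of a closed characteristic on $\partial A_K$, and closed characteristics on the boundary of a Lagrangian product are governed by a billiard-type dynamics inside $K$ with reflection law dual to $K^o$. For the upper bound one exhibits the ``bouncing diameter'' orbit: choose $x_0\in\partial K$ and $p_0\in\partial K^o$ with $\langle p_0,x_0\rangle=1$ (a supporting pair), let the $q$-coordinate oscillate between $-x_0$ and $x_0$ while the $p$-coordinate switches between $p_0$ and $-p_0$; central symmetry of $K$ and $K^o$ makes this a genuine closed characteristic, and integrating $\lambda_0$ along it gives the value $4\langle p_0,x_0\rangle=4$, so $c_{SH}(A_K)\leq 4$. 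The reverse inequality $c_{SH}(A_K)\geq 4$ is the substantive analytic input: it is a convexity estimate saying that no closed characteristic on $\partial(K\times K^o)$ has action below $4$, equivalently that the shortest closed billiard trajectory in a centrally symmetric convex body, measured against the polar gauge, has length $4$. Since $\partial A_K$ is only Lipschitz when $K$ or $K^o$ is polyhedral, one would first prove the identity for smooth strictly convex $K$ and then pass to the general case by approximating $K$ from outside by smooth convex bodies, using continuity of both $c_{SH}$ and the volume under Hausdorff convergence of convex bodies.

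The hard part is twofold. First, the weak Viterbo conjecture is itself open, so this is a conditional statement; its value is conceptual, exhibiting the purely convex-geometric Mahler inequality as a shadow of a single symplectic systolic inequality. Second, even with weak Viterbo in hand, the equality $c_{SH}(K\times K^o)=4$ must be established with the precise constant $4$: the upper bound is the short computation above, but the matching lower bound — that closed characteristics on Lagrangian products cannot be shorter than the bouncing diameter — is exactly where convexity and central symmetry enter essentially, through the billiard correspondence. Finally, concerning equality: the cube $Q$ and its polar cross-polytope $Q^o$ satisfy $V(Q)=4^m/m!$, so the bound is attained and $A_Q=Q\times Q^o$ is a convex body with $\rho_{c_{SH}}=1$ that is not symplectomorphic to a Euclidean ball. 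Consequently only the inequality clause of the weak Viterbo conjecture can be invoked here, and identifying the extremizers in Mahler's conjecture (conjecturally the Hanner polytopes) lies outside what this argument delivers and remains a separate, subtler problem.
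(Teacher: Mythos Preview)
The statement is a \emph{conjecture}, and the paper does not prove it; it only records the conditional implication ``weak Viterbo $\Rightarrow$ Mahler'' via the Artstein-Avidan--Karasev--Ostrover theorem $c_{SH}(K\times K^o)=4$. Your proposal reproduces exactly this conditional argument, with the same structure (form the Lagrangian product $A_K=K\times K^o$, use $\vol_{\R^{2m}}(A_K)=V(K)$, plug $c_{SH}(A_K)=4$ into $\rho_{c_{SH}}(A_K)\leq 1$), and you correctly and explicitly flag that the argument is conditional on an open conjecture. You go somewhat beyond the paper by sketching the billiard interpretation behind the AKO identity, which the paper simply cites as a black box.

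One caution on your final paragraph: the assertion that $Q\times Q^o$ ``is not symplectomorphic to a Euclidean ball'' is not something you have established, and it is not obvious. If it were true, it would actually \emph{refute} the equality clause of the weak Viterbo conjecture rather than merely lie outside its scope, since $\rho_{c_{SH}}(Q\times Q^o)=1$. Whether Hanner polytopes cross their polars are symplectic balls is a genuine question; you should state this as open rather than as a fact.
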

The conjecture has been proven by Mahler for $m=2$ and very recently by Iriyeh and Shibata for $m=3$. In higher dimension, it is still open and is also expected that the minimizers are exactly the so-called Henner polytopes, which is the smallest class of polytopes containing the interval and closed under products and polarity.

Observe that $K\times K^o$ is a convex body in $\R^{2m}$ although with a non-smooth boundary. Therefore, Mahler's conjecture follows from the weak Viterbo conjecture via the following amazing result.
\begin{thm}[Artstein-Avidan--Karasev--Ostrover 2014 \cite{AKO14}]
For all centrally symmetric convex bodies in $\R^m$, there holds $c_{SH}(K\times K^o)=4$.
\end{thm}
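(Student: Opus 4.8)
\section*{Proof proposal}

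The plan is to identify $c_{SH}(K\times K^o)$ with the minimal action of a closed characteristic on $\partial(K\times K^o)$, to describe those characteristics as closed Minkowski billiard trajectories in $K$, to exhibit an explicit one of action $4$, and then to rule out anything shorter. Since $0\in\op{int}K$ and $0\in\op{int}K^o$, the body $K\times K^o$ is a convex domain containing the origin in its interior, so the identity stated above for convex domains gives $c_{SH}(K\times K^o)=T_{\min}(\alpha_{\partial(K\times K^o)})$. The boundary is only piecewise smooth, so I would first carry out the argument when $K$ is smooth and strictly convex (so that $K^o$ is too), and then pass to a general centrally symmetric $K$ by writing it as a Hausdorff limit of smooth strictly convex bodies $K_j$: then $K_j\times(K_j)^o\to K\times K^o$ in the Hausdorff distance, and $c_{SH}$ is continuous there, which follows by sandwiching and the monotonicity and $2$-homogeneity of a capacity.

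Next I would describe the closed characteristics on a Lagrangian product $K\times T$ (with $T$ convex in the $p$-factor), following Artstein-Avidan and Ostrover. Taking the gauge of $K\times T$ as Hamiltonian, one checks that along a characteristic the $q$-coordinate stays fixed while $p$ slides in the direction of $-\nu_K(q)$ (the outer normal to $K$) as long as $q\in\partial K$, and symmetrically the $p$-coordinate stays fixed while $q$ slides in the direction of $\nu_T(p)$ as long as $p\in\partial T$. Hence a closed characteristic projects to a closed polygon $q_1\to q_2\to\cdots\to q_n\to q_1$ with $q_i\in\partial K$, carrying momenta $p_i\in\partial T$ such that $q_{i+1}-q_i$ is a positive multiple of $\nu_T(p_i)$ and $p_{i-1}-p_i$ lies in $\mathbb{R}_{\ge0}\,\nu_K(q_i)$: a closed $(K,T)$-billiard. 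Integrating $\lambda_0=\tfrac12(p\,\dd q-q\,\dd p)$ around such a loop and telescoping gives that its action equals $\sum_i\langle p_i,q_{i+1}-q_i\rangle$, and since $p_i$ is dual to the segment direction $q_{i+1}-q_i$ this is $\sum_i h_T(q_{i+1}-q_i)$, the $T$-length of the billiard path. Taking $T=K^o$, so that $h_{K^o}=\|\cdot\|_K$, we conclude that $c_{SH}(K\times K^o)$ is exactly the minimal $\|\cdot\|_K$-length of a closed $(K,K^o)$-billiard trajectory in $K$.

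For the inequality $c_{SH}(K\times K^o)\le4$ I would write down the diameter bounce. Choose $q_0\in\partial K$ and a normalized outer normal $p_0\in\partial K^o$ of $K$ at $q_0$, so that $\langle p_0,q_0\rangle=1$; by central symmetry $-q_0\in\partial K$ has outer normal $-p_0$, and the outer normal to $K^o$ at $\pm p_0$ is $\pm q_0$. The $2$-periodic trajectory with vertices $q_1=q_0$, $q_2=-q_0$ and momenta $p_1=-p_0$, $p_2=p_0$ then satisfies the segment-direction condition along each edge and the reflection law at $q_0$ and at $-q_0$, so it is a genuine closed characteristic, and its action is $\|{-2q_0}\|_K+\|2q_0\|_K=2+2=4$.

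The matching lower bound $c_{SH}(K\times K^o)\ge4$ is the heart of the matter: every closed $(K,K^o)$-billiard trajectory in $K$ has $\|\cdot\|_K$-length at least $4$. Here I would use the Clarke-dual variational formula for the Ekeland--Hofer--Zehnder capacity of the convex body $A=K\times K^o$, in which the product structure splits the support function as $h_A(v,w)=h_K(v)+\|w\|_K$ and the two factors are coupled only through the defining inequality of the polar, $\langle q,p\rangle\le\|q\|_K\,\|p\|_{K^o}$ with equality exactly on dual pairs. One then checks that the dual action of any admissible loop is at least $4$, with the minimum attained by the back-and-forth diameter configuration of step three. Equivalently, in billiard language, one shows that the momenta $p_1,\dots,p_n\in\partial K^o$ of a closed $(K,K^o)$-characteristic, together with the constraint $p_i\in\partial K^o$ and the reflection law, cannot realize a trajectory of total length less than $4$. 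I expect this convex-analytic estimate --- which is the Minkowski billiard inequality of Artstein-Avidan and Ostrover --- to be the main obstacle; once it is in hand, it combines with the explicit upper bound to give $c_{SH}(K\times K^o)=4$.
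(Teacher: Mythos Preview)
The paper does not prove this theorem; it merely states it as a result of \cite{AKO14} and uses it to deduce that the weak Viterbo conjecture implies the Mahler conjecture. So there is no ``paper's own proof'' to compare against. Your sketch follows the strategy of the original Artstein-Avidan--Karasev--Ostrover argument (building on the Minkowski billiard description of closed characteristics on Lagrangian products due to Artstein-Avidan and Ostrover), and the upper bound via the explicit $2$-bounce trajectory is correct.

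The genuine gap is in the lower bound. You yourself flag it as ``the heart of the matter'' and ``the main obstacle,'' but what you write there is not an argument: saying that ``one then checks that the dual action of any admissible loop is at least $4$'' and invoking the polar inequality $\langle q,p\rangle\le\|q\|_K\|p\|_{K^o}$ does not single out the number $4$, and the billiard reformulation alone does not obviously forbid, say, a short $3$-bounce. In \cite{AKO14} the lower bound is obtained by a specific convex-geometric argument: one shows that for a closed $(K,K^o)$-billiard with momenta $p_1,\dots,p_n\in\partial K^o$ summing to zero (this closure condition is essential and you do not use it), the total $\|\cdot\|_K$-length equals $\sum_i h_K(p_{i-1}-p_i)$, and then one proves that any cycle of points on $\partial K^o$ with zero sum of increments satisfies $\sum_i h_K(p_{i-1}-p_i)\ge 4$, using central symmetry in a crucial way to reduce to a diameter-type estimate. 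Without making this step explicit your proposal remains a plan rather than a proof.
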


\subsection{Non-squeezing in the intermediate dimensions}
The second statement which is implied by the weak Viterbo conjecture is a non-squeezing theorem in the intermediate dimensions for convex images of the ball. Recalling \eqref{e:ns}, a possible way to ask if there are non-squeezing phenomena for the intermediate volumes $\omega^k_0$ is as follows.
\begin{que}\label{q:nsinter}
Let $1\leq k\leq m$ and $P_{\R^{2m}}:\R^{2m}\to\R^{2k}$ be the projection on the first $2k$ coordinates. Is it true that for every $\varphi\in\mathrm{Symp}(\R^{2m})$, there holds
\begin{equation}\label{e:nsinter}
\vol_{\!\R^{2k}}\big(P_{\R^{2k}}(\varphi(B^{2m}))\big)\geq \vol_{\!\R^{2k}}(B^{2k})=\frac{\pi^k}{k!}?
\end{equation}
\end{que}
The answer is positive for $k=1$ by \eqref{e:ns} and for $k=m$ since symplectomorphisms preserve the volume. However, for every $1<k<m$ Abbondandolo and Matveyev showed in \cite{AM13} that the answer to Question \ref{q:nsinter} is negative in general by constructing for every $\epsilon>0$ a symplectomorphism $\varphi$ with $\vol_{\!\R^{2k}}\big(P_{\R^{2k}}(\varphi(B^{2m}))\big)<\epsilon$. However, the maps $\varphi$ in these counterexamples heavily distort the ball $B^{2m}$, and indeed inequality \eqref{e:nsinter} holds true if $\varphi$ is also assumed to be linear \cite{AM13}, in which case $\varphi(B^{2m})$ is a convex set. Thus, one can speculate if convexity is sufficient to get non-squeezing in intermediate dimensions.
\begin{con}[Convex non-squeezing in the intermediate dimensions]
If $1<k<m$ and $\varphi$ is a symplectomorphism such that $\varphi(B^{2m})$ is convex, then
\[
\vol_{\!\R^{2k}}\big(P_{\R^{2k}}(\varphi(B^{2m}))\big)\geq \frac{\pi^k}{k!}.
\]
\end{con}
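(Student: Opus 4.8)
The plan is to reduce the statement, via two elementary facts about symplectic capacities, to the Viterbo conjecture for a single convex body in $\R^{2k}$. Write $A:=\varphi(B^{2m})$, a convex body, and after composing $\varphi$ with a translation (an affine symplectomorphism) assume $0\in\inn A$. Let $V\cong\R^{2k}$ be the image of $P:=P_{\R^{2k}}$; with the ordering of coordinates in which this is the \emph{symplectic} projection, $V$ is a symplectic subspace and $P$ projects along its symplectic complement $V^\omega\cong\R^{2(m-k)}$. Then $K:=P(A)$ is a convex body in $\R^{2k}$ with $0\in\inn K$, and the goal is $\vol_{\!\R^{2k}}(K)\geq\pi^k/k!$. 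I would aim to prove $c_Z(K)\geq\pi$ (and, equivalently, $c_{SH}(K)\geq\pi$) and then feed this into a Viterbo-type bound for the convex body $K$.

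First I would record that \emph{every} symplectic capacity of $A$ equals $\pi$: taking $\varphi$ itself in the supremum defining $c_B$ gives $c_B(A)\geq\pi$, while taking $\varphi^{-1}$ in the infimum defining $c_Z$ gives $c_Z(A)\leq\pi$ (since $\varphi^{-1}(A)=B^{2m}(1)\subset Z^{2m}(1)$); as $c_B\leq c\leq c_Z$ for every capacity $c$, this forces $c(A)=\pi$. This uses only the capacity sandwich, hence ultimately Gromov's non-squeezing theorem. Next I would show that $c_Z$ does not decrease under the symplectic projection $P$: for any $B\subset\R^{2m}$, if $\psi\in\symp(\R^{2k})$ satisfies $\psi(P(B))\subset Z^{2k}(r)$, then $\widetilde\psi:=\psi\times\id_{V^\omega}\in\symp(\R^{2m})$ satisfies $\widetilde\psi(B)\subset Z^{2m}(r)$, because $B\subset P(B)\times V^\omega$ and $Z^{2k}(r)\times V^\omega=Z^{2m}(r)$; taking infima over admissible $r$ gives $c_Z(B)\leq c_Z(P(B))$. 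Applying this with $B=A$ yields $c_Z(K)\geq c_Z(A)=\pi$. The analogous inequality $c_{SH}(K)\geq\pi$ follows by combining monotonicity ($B\subset P(B)\times V^\omega$) with the stability of $c_{SH}$ under adding flat symplectic factors, $c_{SH}(C\times\R^{2j})=c_{SH}(C)$, which holds for the symplectic-homology capacities.

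To conclude I would use that for \emph{any} measurable $K\subset\R^{2k}$ one has $\vol_{\!\R^{2k}}(K)\geq c_B(K)^k/k!$, since each symplectically embedded ball $B^{2k}(r)\subset K$ contributes volume $\pi^kr^{2k}/k!$ and symplectomorphisms preserve volume. Hence it is enough to know that $c_B(K)=c_Z(K)$, i.e.\ Conjecture~\ref{con:con} for the convex body $K$: together with $c_Z(K)\geq\pi$ this gives $\vol_{\!\R^{2k}}(K)\geq c_Z(K)^k/k!\geq\pi^k/k!$. Equivalently, the strong Viterbo conjecture gives $\rho_{c_Z}(K)\leq1$ directly, while the weak Viterbo conjecture gives $\rho_{c_{SH}}(K)\leq1$, which combines with $c_{SH}(K)\geq\pi$ to the same conclusion; this is the sense in which the statement follows from the (weak) Viterbo conjecture.

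The hard part is that, after this reduction, everything hinges on the Viterbo conjecture, which is open — so this route can at best be completed conditionally. What it delivers unconditionally is obtained by replacing Viterbo by Theorem~\ref{t:vitbounded} applied to $K\subset\R^{2k}$: this yields $\rho_{c_Z}(K)\leq C^k$ for a universal constant $C$, hence $\vol_{\!\R^{2k}}\big(P_{\R^{2k}}(\varphi(B^{2m}))\big)\geq\pi^k/(k!\,C^k)$, i.e.\ the conjectured non-squeezing up to a dimensional factor, with the conjecture amounting exactly to removing that factor. A possible Viterbo-free line would be to start from the known linear case, where $A$ and $K$ are solid ellipsoids and $\vol_{\!\R^{2k}}(K)\geq\pi^k/k!$ already holds, and to try to propagate the estimate using only the convexity of $A$ (say through a John-ellipsoid comparison for $K$); but controlling the symplectic distortion lost in such a comparison is precisely the kind of control the Viterbo inequality is built to provide, so I do not expect a genuine shortcut, and this is where I expect the real difficulty to lie.
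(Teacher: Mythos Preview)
Your proposal is correct and follows essentially the same route the paper indicates: reduce to the (weak) Viterbo conjecture for the projected convex body $K=P_{\R^{2k}}(\varphi(B^{2m}))$ via a projection inequality for capacities, then conclude $\vol_{\!\R^{2k}}(K)\geq c(K)^k/k!\geq\pi^k/k!$. The only minor difference is that the paper quotes the projection inequality $c_{SH}(P_{\R^{2k}}(A))\geq c_{SH}(A)$ for convex $A$ from \cite{AM15}, whereas you supply an elementary proof of the analogous inequality for $c_Z$ (valid for arbitrary sets) and derive the $c_{SH}$ version from monotonicity together with product stability; either route feeds identically into the Viterbo bound.
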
 
The weak Viterbo conjecture implies the convex non-squeezing in the intermediate dimensions thanks to the additional property that $c_{SH}(P_{\R^{2k}}(A))\geq c_{SH}(A)$ if $A$ is a convex body \cite{AM15} (can you show the implication using this inequality?).

The weak Viterbo conjecture is still open in general but it was recently shown to hold for convex bodies close to the ball.
\begin{thm}[Abbondandolo--Benedetti 2019 \cite{AB19}]\label{t:wV}
Let $A$ be a convex body such that $\p A=S_\psi$ for some $\psi\colon S^{2m-1}$ which is $C^3$ close to the constant function $1$. Then
\[
\rho_{SH}(A)\leq 1
\]
with equality if and only if $A$ is symplectomorphic to a ball.
\end{thm}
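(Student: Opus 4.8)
The plan is to follow the ``Loewner scheme'' of Remark~\ref{r:important}: put the contact form into a normal form adapted to the Zoll model, and then run a chain of mean inequalities over a family of loops that foliates the sphere. First I would reduce the statement to a contact-geometric one on $S^{2m-1}$. Since $A$ is convex we have $\rho_{SH}(A)=\rho(\alpha_{\p A})$, by the theorem identifying $c_{SH}$ with $T_{\min}$ on convex domains together with \eqref{e:vol}; by \eqref{e:psipsi} the diffeomorphism $F_\psi$ carries $\alpha_{\p A}$ to $\alpha:=h\alpha_0$ with $h:=\psi^2$ and $\alpha_0:=\alpha_{S^{2m-1}}$, and $h$ is $C^3$-close to $1$. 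The Reeb flow of $\alpha_0$ is the Hopf flow, which is Zoll with all periods equal to $T_0=\pi$, and $\rho(\alpha_0)=1$ (indeed $\vol_{\alpha_0}(S^{2m-1})=\int_{B^{2m}}\omega_0^m=\pi^m=T_{\min}(\alpha_0)^m$ by \eqref{e:vol}). Hence it suffices to prove $\rho(\alpha)\le 1$, with equality only when $\alpha$ is Zoll, and then to translate the equality case back into ``$A$ is symplectomorphic to a ball''.

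The second step is a computation that does not yet use the normal form. Because $\alpha_0\wedge\alpha_0=0$ one finds $\alpha\wedge(\dd\alpha)^{m-1}=h^m\,\alpha_0\wedge(\dd\alpha_0)^{m-1}$, so $\vol_\alpha(S^{2m-1})=\int_{S^{2m-1}}h^m\,\dd\vol_{\alpha_0}$. Through the Hopf fibration $\pi\colon S^{2m-1}\to\C P^{m-1}$ one has $\dd\alpha_0=\pi^*\omega$ and $\dd\vol_{\alpha_0}=\alpha_0\wedge\pi^*\omega^{m-1}$, and integrating first along the fibres $\gamma_x$ (parametrised so that $\alpha_0(\dot\gamma_x)=1$, hence of $\alpha_0$-length $T_0$) gives
\[
\vol_\alpha(S^{2m-1})=\int_{\C P^{m-1}}\Big(\int_0^{T_0}h(\gamma_x(s))^m\,\dd s\Big)\,\omega^{m-1}.
\]
The power mean inequality on each fibre, the pointwise bound $\mathcal A(x)^m\ge(\min_x\mathcal A)^m$, and the normalisation $\int_{\C P^{m-1}}\omega^{m-1}=T_0^{m-1}$ now give
\[
\vol_\alpha(S^{2m-1})\ \ge\ \frac{1}{T_0^{m-1}}\int_{\C P^{m-1}}\mathcal A(x)^m\,\omega^{m-1}\ \ge\ \Big(\min_{x}\mathcal A(x)\Big)^m,\qquad \mathcal A(x):=\int_{\gamma_x}\alpha ,
\]
so the whole statement reduces to showing that every Hopf circle has $\alpha$-action at least $T_{\min}(\alpha)$.

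This last inequality is false for a generic $h$, since $\gamma_x$ is a Reeb orbit of $\alpha_0$ and not of $\alpha$; this is where the real work sits. The remedy is that one is free to precompose $\alpha$ with any contactomorphism $\Psi$ of $(S^{2m-1},\xi_0)$, which by \eqref{e:tvrho} changes neither $T_{\min}$, nor $\vol_\alpha$, nor $\rho$, while replacing $h$ by the conformal factor $\tilde h$ of $\Psi^*\alpha$ and the foliation $\{\gamma_x\}$ by $\{\Psi(\gamma_x)\}$. The crucial input --- the analogue of the uniformization theorem in Loewner's proof --- is a Moser-type normal form: when $h$ is $C^3$-close to $1$ there is a contactomorphism $\Psi$ after which $\tilde h$ is constant along the Hopf fibres up to a remainder too small to affect the inequalities, i.e.\ $\tilde h=h_0\circ\pi$ for some $h_0\colon\C P^{m-1}\to(0,\infty)$ plus a controlled error. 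For such a form the Reeb field of $(h_0\circ\pi)\alpha_0$ is $\tfrac{1}{h_0\circ\pi}\big(R_{\alpha_0}+V\big)$ with $V$ horizontal and $V=0$ over $\Crit(h_0)$, so the fibre over a minimum point of $h_0$ is a closed Reeb orbit of period $T_0\min h_0=\min_x\int_{\gamma_x}\tilde h\,\alpha_0$; hence $T_{\min}(\alpha)\le\min_x\int_{\gamma_x}\tilde h\,\alpha_0$. Plugging this into the chain of the previous paragraph, applied to $\Psi^*\alpha$, gives $\vol_\alpha(S^{2m-1})\ge T_{\min}(\alpha)^m$, i.e.\ $\rho_{SH}(A)=\rho(\alpha)\le1$. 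In the case of equality, $\rho(\alpha)=1$ forces equality throughout: the power mean equality makes $\tilde h=h_0\circ\pi$ exactly, and $\int_{\C P^{m-1}}(h_0^m-(\min h_0)^m)\,\omega^{m-1}=0$ then forces $h_0$ constant, so $\Psi^*\alpha=c\,\alpha_0$ and $\alpha$ is Zoll; since a contactomorphism of $(S^{2m-1},\xi_0)$ taking $c\alpha_0$ to $\psi^2\alpha_0$ extends through the symplectisation $\R_{>0}\times S^{2m-1}\cong\R^{2m}\setminus\{0\}$ to a symplectomorphism, $A$ is symplectomorphic to a ball.

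I expect the main obstacle to be precisely the normal-form step: one must produce a contactomorphism controlling at the same time the geometric side (the conformal factor, hence the volume integral above) and the dynamical side (the minimal Reeb period), and verifying that the error discarded when passing to $\tilde h=h_0\circ\pi$ is genuinely harmless is what forces the $C^3$-hypothesis --- a Moser/deformation construction of $\Psi$ needs one derivative to spare beyond the $C^2$ data ($h$ together with its first two derivatives) that already govern $\vol_\alpha$ and the Reeb dynamics. The remaining steps are bookkeeping around the two ingredients advertised in Remark~\ref{r:important}: a normal form, and the minimum--arithmetic-mean--power-mean inequality on a foliating family of loops.
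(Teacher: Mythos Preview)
Your proposal is correct and follows essentially the same route as the paper: reduce to a contact-systolic statement for $\alpha=h\alpha_0$ on $S^{2m-1}$ close to the Zoll form $\alpha_0$, invoke a normal-form step (pulling back by a diffeomorphism so that the conformal factor is, up to a controlled remainder, constant along the Hopf fibres), and then run a mean inequality over the foliation by Hopf circles; the equality case and the passage back to ``$A$ symplectomorphic to a ball'' are also handled as in the paper. The only cosmetic difference is that your chain inserts an arithmetic--power mean inequality on each fibre before the minimum bound, whereas the paper points out that after the normal form one ``gets away just with the inequality between the minimum and the arithmetic mean'': once $\tilde h=h_0\circ\pi$ your fibrewise power-mean step is an equality and can be dropped.
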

Thus, the non-squeezing theorem in the intermediate dimensions holds for symplectomorphisms close to linear ones.
\begin{cor}[Local non-squeezing in the intermediate dimensions]
If $1<k<m$ and $\varphi$ is a symplectomorphism which is $C^3$-close to a linear one, then
\[
\vol_{\!\R^{2k}}\big(P_{\R^{2k}}(\varphi(B^{2m}))\big)\geq \frac{\pi^k}{k!}.
\]	
\end{cor}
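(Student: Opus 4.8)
The plan is to transport the local weak Viterbo inequality of Theorem~\ref{t:wV} from the convex body $A:=\varphi(B^{2m})\subset\R^{2m}$ to its symplectic shadow $B:=P_{\R^{2k}}(A)\subset\R^{2k}$, using the two structural inputs recorded above: that $c_{SH}$ is a symplectic capacity, and that it does not decrease under the coordinate projection on convex bodies, $c_{SH}(P_{\R^{2k}}(C))\ge c_{SH}(C)$ for every convex body $C\subset\R^{2m}$ \cite{AM15}. Let $L\in\symp(\R^{2m})$ be the linear symplectomorphism that $\varphi$ is $C^3$-close to. First I would observe that $A$ is a $C^3$-small perturbation of the ellipsoid $L(B^{2m})$, whose boundary is strictly convex; hence, for $\varphi$ sufficiently close to $L$, $A$ is a convex body, with $\vol_{\!\R^{2m}}(A)=\vol_{\!\R^{2m}}(B^{2m})=\pi^m/m!$ and, by the invariance and normalization axioms of a capacity, $c_{SH}(A)=c_{SH}(B^{2m})=\pi$. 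Since $B$ is then a convex body as well, the projection inequality gives $c_{SH}(B)\ge c_{SH}(A)=\pi$.

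Next I would bring $B$ within the scope of Theorem~\ref{t:wV}. Set $E':=P_{\R^{2k}}(L(B^{2m}))$, an ellipsoid in $\R^{2k}$ centered at the origin; then $B$ is a $C^3$-small perturbation of $E'$. Applying the same reasoning to the linear model, $c_{SH}(E')\ge c_{SH}(L(B^{2m}))=\pi$; writing $E'$ in its linear-symplectic normal form $E(a_1,\dots,a_k)$ with $a_1\le\dots\le a_k$ and using $c_{SH}(E(a_1,\dots,a_k))=\pi a_1$, this forces $a_1\ge1$ and hence $\vol_{\!\R^{2k}}(E')=\tfrac{\pi^k}{k!}\,a_1\cdots a_k\ge\tfrac{\pi^k}{k!}$, with equality if and only if every $a_i=1$, i.e.\ if and only if $E'$ is linearly symplectomorphic to the round ball $B^{2k}$ (this recovers in particular the linear non-squeezing inequality of \cite{AM13}). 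Now I split into two cases. If $\vol_{\!\R^{2k}}(E')>\pi^k/k!$, then, since $B$ is Hausdorff-close to $E'$ when $\varphi$ is $C^0$-close to $L$ and volume is continuous on convex bodies for the Hausdorff distance, $\vol_{\!\R^{2k}}(B)\ge\pi^k/k!$ as soon as $\varphi$ is sufficiently close to $L$, and we are done. If instead $\vol_{\!\R^{2k}}(E')=\pi^k/k!$, choose a linear $M\in\symp(\R^{2k})$ with $M(B^{2k})=E'$; then $M^{-1}(B)$ is a convex body which is a $C^3$-small perturbation of $B^{2k}$, so it is starshaped with respect to the origin and $\partial\big(M^{-1}(B)\big)=S_\psi$ for some $\psi\colon S^{2k-1}\to(0,\infty)$ that is $C^3$-close to the constant function $1$. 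Theorem~\ref{t:wV}, applied in $\R^{2k}$, yields $\rho_{SH}(M^{-1}(B))\le1$; since $M$ preserves $c_{SH}$ and $\vol_{\!\R^{2k}}$, this reads $c_{SH}(B)^k\le k!\,\vol_{\!\R^{2k}}(B)$, and combining with $c_{SH}(B)\ge\pi$ gives $\vol_{\!\R^{2k}}(B)\ge c_{SH}(B)^k/k!\ge\pi^k/k!$, as claimed.

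I expect the main obstacle to be the step carried out in the second paragraph: turning the hypothesis that $\varphi$ is $C^3$-close to $L$ into the statement that $B$, up to a fixed linear symplectomorphism of $\R^{2k}$, is $C^3$-close to the round ball $B^{2k}$. Two points need care here. First, that $C^3$-closeness of the map $\varphi$ to $L$ really propagates to $C^3$-closeness of the apparent contours $\partial B$ and $\partial E'$; this is a transversality fact about shadows of strictly convex hypersurfaces, true but mildly delicate, and one might prefer to assume a little extra regularity on $\varphi$ to be safe. Second, the reduction itself rests on the equality discussion in linear non-squeezing, which is exactly where the monotonicity $c_{SH}(P_{\R^{2k}}(\cdot))\ge c_{SH}(\cdot)$ together with the value of $c_{SH}$ on ellipsoids enters crucially. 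Everything else --- the convexity of $A$ and of $B$, the equalities $c_{SH}(A)=\pi$ and $c_{SH}(L(B^{2m}))=\pi$, and the final chain of inequalities --- is formal once these are in place.
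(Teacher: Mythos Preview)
Your argument is correct and follows exactly the route the paper intends: combine the projection monotonicity $c_{SH}(P_{\R^{2k}}(A))\ge c_{SH}(A)$ for convex bodies with the local weak Viterbo inequality of Theorem~\ref{t:wV}. The paper gives no proof beyond the word ``Thus,'' and your write-up is actually more careful: your case split on whether the linear shadow $E'=P_{\R^{2k}}(L(B^{2m}))$ has volume strictly greater than $\pi^k/k!$ or exactly $\pi^k/k!$ addresses a point the paper glosses over, namely that Theorem~\ref{t:wV} only applies to bodies near the round ball while $E'$ is a priori just an ellipsoid, and your reduction via the equality case of linear non-squeezing is the right way to close this. The regularity concern you flag about propagating $C^3$-closeness to the shadow boundary is genuine but minor in this context.
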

\subsection{Fibrewise starshaped hypersurfaces in cotangent bundles}
We come now to examples of convex hypersurfaces in $T^*M$. In $T^*M$, we have the global primitive for $\omega_{T^*M}$ given by
\[
\lambda_{T^*M}=\sum_{j=1}^mp_j\dd q_j.
\]
In particular, we see that 
\begin{equation}\label{e:Y}
	\lambda_{T^*M}=\omega_{T^*M}(Y,\cdot),
\end{equation}
where $Y=\sum_{j=1}^mp_j\partial_{p_j}$ is the fibrewise radial vector field.

\noindent\!\begin{minipage}[t]{.43\textwidth}
\quad As in the euclidean case, for every Hamiltonian $H\colon T^*M\to\R$ there holds
	\begin{equation}\label{e:radial}
		\lambda_{T^*M}(X_H)=\dd H(Y)
	\end{equation}
	so that every $S=\{H=h\}$ which is transverse to the fibrewise radial vector field is of contact type with contact form given by $\alpha_S:=\lambda_{T^*M}|_{TS}$. We call such hypersurfaces fibrewise starshaped (with respect to the zero section). If $S_*$ is a fixed fibrewise starshaped hypersurface, all others hypersurfaces $S$ with this property are obtained from positive smooth functions $\psi\colon S_*\to (0,\infty)$ by defining $S:=S_\psi=F_\psi(S_*)$, where
\end{minipage}
\begin{minipage}[t]{.57\textwidth}
	\kern-8pt
	\centering\includegraphics[width=.9\textwidth]{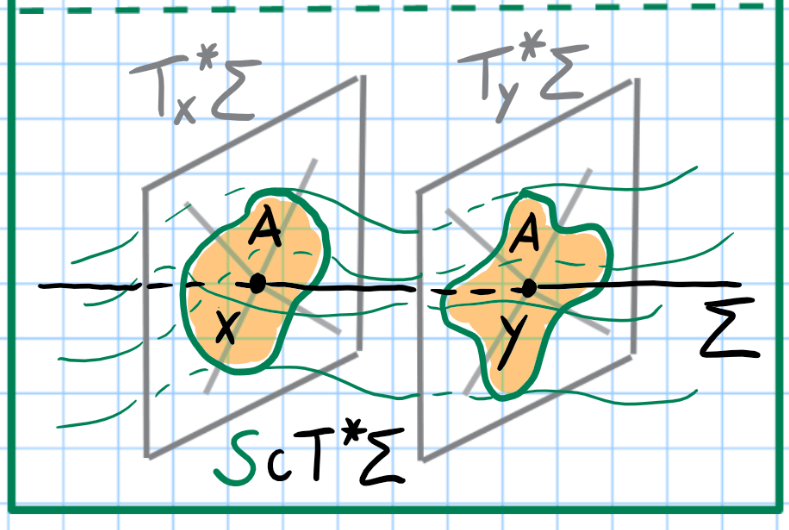}\captionof{figure}{Fibrewise starshaped domain}
\end{minipage}

\[
F_\psi\colon S_*\to T^*M,\qquad F_\psi(\xi)=\psi(\xi)\xi.
\]
There holds
\[
F_\psi^*(\alpha_{S_\psi})=\psi\alpha_{S_*}.
\]
Therefore, contact forms on fibrewise starshaped hypersurfaces are in one-to-one correspondence with the contact forms supporting the contact structure $\xi_{S_*}:=\ker \alpha_{S_*}$ on the fixed hypersurface $S_*$. By Theorem \ref{t:unbounded}, we have the following result.
\begin{cor}\label{c:unboundedcot}
The contact systolic ratio $\rho$ is unbounded on the set
\[
\{\alpha_S\ |\ S\subset T^*M \text{ fibrewise starshaped}\}.
\]
\end{cor}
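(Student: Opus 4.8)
The plan is to deduce the corollary directly from Theorem~\ref{t:unbounded}, using the dictionary between fibrewise starshaped hypersurfaces and supporting contact forms on a fixed model hypersurface that was set up just above.

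First I would fix once and for all a reference fibrewise starshaped hypersurface $S_*\subset T^*M$, for instance the unit cotangent bundle $S_*=\{H_g=\tfrac12\}$ of some auxiliary Riemannian metric $g$ on $M$. Since $H_g$ is fibrewise two-homogeneous, $\dd H_g(Y)=2H_g=1>0$ on $S_*$, so $S_*$ is transverse to the fibrewise radial vector field $Y$ and hence of contact type by \eqref{e:radial}, with contact form $\alpha_{S_*}=\lambda_{T^*M}|_{TS_*}$. Note that $\dim S_*=2m-1\geq 3$ (here $\dim M=m\geq 2$), so $(S_*,\xi_{S_*})$ with $\xi_{S_*}=\ker\alpha_{S_*}$ is a genuine contact manifold to which Theorem~\ref{t:unbounded} applies.

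Next I would invoke the parametrization recalled above: every fibrewise starshaped $S$ equals $S_\psi=F_\psi(S_*)$ for a unique positive $\psi\colon S_*\to(0,\infty)$, with $F_\psi\colon S_*\to S_\psi$ a diffeomorphism and $F_\psi^*(\alpha_{S_\psi})=\psi\,\alpha_{S_*}$. The diffeomorphism invariance \eqref{e:tvrho} then yields
\[
\rho(\alpha_{S_\psi})=\rho(\psi\,\alpha_{S_*}),\qquad \forall\,\psi\colon S_*\to(0,\infty).
\]
Conversely, any $\alpha\in\mathcal C(\xi_{S_*})$ can be written as $\alpha=\psi\,\alpha_{S_*}$ with $\psi$ positive: the quotient $\alpha/\alpha_{S_*}$ is a nowhere vanishing function, and it is positive because $\alpha$ and $\alpha_{S_*}$ induce the same co-orientation of $\xi_{S_*}$. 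Therefore the systolic ratios of the family $\{\alpha_{S_\psi}\}$ and those of $\mathcal C(\xi_{S_*})$ have the same supremum.

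Finally, Theorem~\ref{t:unbounded} gives $\sup_{\alpha\in\mathcal C(\xi_{S_*})}\rho(\alpha)=+\infty$, and combining this with the identity above we obtain $\sup_\psi\rho(\alpha_{S_\psi})=+\infty$, which is the assertion. There is no real obstacle here beyond bookkeeping; the only point worth a line of care is the surjectivity of $\psi\mapsto\psi\,\alpha_{S_*}$ onto $\mathcal C(\xi_{S_*})$ together with positivity of the conformal factor, which rests on the co-orientation conventions in the definition of a contact structure. All the genuine content is hidden in Theorem~\ref{t:unbounded}, whose proof (constructing, on an arbitrary contact manifold, supporting forms of arbitrarily large systolic ratio via a local plug insertion) is the hard input being quoted.
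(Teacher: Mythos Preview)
Your proposal is correct and follows exactly the approach the paper intends: the paper simply states ``By Theorem~\ref{t:unbounded}, we have the following result'' after establishing the bijection $S_\psi\leftrightarrow\psi\,\alpha_{S_*}$ and the pullback identity $F_\psi^*(\alpha_{S_\psi})=\psi\,\alpha_{S_*}$, and you have merely written out the implicit steps (including the dimension check $\dim S_*\geq 3$ and the surjectivity onto $\mathcal C(\xi_{S_*})$) with appropriate care.
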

An important class of fibrewise starshaped hypersurfaces are given by fibrewise convex hypersurfaces, namely the boundaries $S=\p A$ of fibrewise convex domains $A$ containing the zero section in their interior. Such hypersurfaces arise from Finsler norms $F$ on $M$. A Finsler norm $F:TM\to \R$ is a function which restricts to (a possibly asymmetric norm) $F|_{T_xM}\colon T_xM\to\R$ for each $x\in M$. For instance, $F_g:=|\cdot|_g$ is a Finsler norm for every $g\in\mathcal R(M)$. As in the Riemannian case, we can define the length of curves by integrating the norm of the tangent vectors. Curves locally minimizing the length are called $F$-geodesics. Under some assumptions on $F$, we have also a duality $\flat_F:TM\to T^*M$ and Theorem \ref{t:flat} holds also in the Finsler setting, where the Hamiltonian on $T^*M$ is given by $H_F:=\tfrac12 (F^*)^2\colon T^*M\to\R$, where $F^*$ is the dual norm. Thus, we get the fibrewise convex hypersurface
\[
S_F:=\{F^*= 1\}=H_F^{-1}(1/2)
\]
and all fibrewise convex hypersurfaces arise in this way from a suitable Finsler metric. We abbreviate $\alpha_F=\alpha_{S_F}$ and $\alpha_g=\alpha_{F_g}$, $S_g=S_{F_g}$. We know that the Reeb vector field $R_{\alpha_F}$ is a multiple of $X_{H_F}|_{S_F}$. To compute the multiple, we use \eqref{e:radial} and Euler theorem for the fibrewise two-homogeneous function $H_F$:
\[
\lambda_{T^*M}(X_{H_F}|_{S_F})=\dd H_F(Y|_{S_F})=2H_F|_{S_F}=2\cdot\tfrac12=1.
\]
Therefore, the multiple is $1$ and $R_{\alpha_F}=X_{H_F}|_{S_F}$. We obtain the following result.
\begin{thm}\label{t:perper}
A curve $\gamma:\R\to M$ is a geodesic for $F$ with unit speed if and only if $\flat_F(\dot\gamma)$ is a Reeb orbit for $\alpha_F$ on $S_F$. Moreover, $\gamma$ is periodic if and only if $\flat_F(\dot\gamma)$ is periodic and the $F$-length of $\gamma$ is equal to the period of $\flat_F(\dot\gamma)$ (since we took $\gamma$ of unit speed).
\end{thm}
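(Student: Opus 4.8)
The plan is to deduce the statement from the identity $R_{\alpha_F}=X_{H_F}|_{S_F}$ established in the lines just above, together with the Finsler version of the correspondence between the geodesic flow and the Hamiltonian flow of the kinetic energy.

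First I would record the two elementary properties of the Legendre duality $\flat_F\colon TM\to T^*M$ attached to $F$ under the assumptions in force: it is a fibrewise diffeomorphism, and it is norm-preserving, $F^*(\flat_F v)=F(v)$ for every $v\in TM$. Since $H_F=\tfrac12(F^*)^2$, this gives $H_F(\flat_F v)=\tfrac12 F(v)^2$, so that $\flat_F v$ lies on $S_F=\{F^*=1\}=H_F^{-1}(1/2)$ if and only if $F(v)=1$; equivalently, the lift $t\mapsto\flat_F(\dot\gamma(t))$ of a curve $\gamma\colon\R\to M$ takes values in $S_F$ precisely when $\gamma$ has unit $F$-speed. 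Next I would invoke the correspondence recalled earlier for Riemannian metrics, valid verbatim in the Finsler setting under the same assumptions on $F$: the curve $t\mapsto\flat_F(\dot\gamma(t))$ is a trajectory of the Hamiltonian vector field $X_{H_F}$ exactly when $\gamma$ is an $F$-geodesic.

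The only remaining input is the computation preceding the statement: because $H_F$ is fibrewise two-homogeneous, Euler's theorem together with \eqref{e:radial} yields $\lambda_{T^*M}(X_{H_F}|_{S_F})=2H_F|_{S_F}=1$; and since $X_{H_F}|_{S_F}$ also satisfies $\dd\alpha_F(X_{H_F}|_{S_F},\cdot)=\omega_{T^*M}(X_{H_F},\cdot)|_{TS_F}=-\dd H_F|_{TS_F}=0$, the defining properties of the Reeb vector field force $R_{\alpha_F}=X_{H_F}|_{S_F}$ \emph{as vector fields}, not merely up to reparametrisation. Hence a curve contained in $S_F$ is an integral curve of $X_{H_F}$ if and only if it is an integral curve of $R_{\alpha_F}$, with the same time parametrisation. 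Chaining the equivalences gives: $\gamma$ is a unit-speed $F$-geodesic $\iff$ $\flat_F(\dot\gamma)$ lies on $S_F$ and solves $\dot z=X_{H_F}(z)$ $\iff$ $\flat_F(\dot\gamma)$ is a Reeb orbit of $\alpha_F$, which is the first assertion.

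For the periodicity statement I would use that $\gamma=\pi\circ\flat_F(\dot\gamma)$ and, fibrewise, $\dot\gamma=\flat_F^{-1}(\flat_F(\dot\gamma))$: if $\gamma(t+T)=\gamma(t)$ for all $t$, differentiating gives $\dot\gamma(t+T)=\dot\gamma(t)$, hence $\flat_F(\dot\gamma)$ is $T$-periodic; conversely, applying $\pi$ shows that $T$-periodicity of $\flat_F(\dot\gamma)$ implies $T$-periodicity of $\gamma$. Finally, when $\gamma$ is periodic of period $T$, its $F$-length over one period is $\int_0^T F(\dot\gamma)\,\dd t=\int_0^T 1\,\dd t=T$, which coincides with the period of the Reeb orbit $\flat_F(\dot\gamma)$ (equivalently, with its action $\int\alpha_F$, using $\alpha_F(R_{\alpha_F})=1$). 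The only genuinely delicate point, and hence the main obstacle, is the bookkeeping around the Finsler Legendre transform — pinning down the precise convexity and regularity hypotheses on $F$ that make $\flat_F$ a fibrewise diffeomorphism, make $F^*(\flat_F v)=F(v)$ hold, and make the geodesic flow conjugate to the flow of $X_{H_F}$; once these are in place, the theorem is immediate from $R_{\alpha_F}=X_{H_F}|_{S_F}$.
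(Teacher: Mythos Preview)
Your proposal is correct and follows essentially the same approach as the paper: the paper does not give a separate proof after the statement but simply presents the theorem as an immediate consequence of the identity $R_{\alpha_F}=X_{H_F}|_{S_F}$ (established via Euler's theorem and \eqref{e:radial} in the lines just before) together with the Finsler analogue of the geodesic/Hamiltonian-flow correspondence. Your write-up is a faithful and more detailed elaboration of exactly this, including the routine periodicity and length-equals-period verifications that the paper leaves implicit.
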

When $F=F_g$, there is also a relationship between the volume of $M$ with respect to $g$ and the contact volume of $\alpha_{g}$, which we do not have time to justify here (see \cite[Exercise 1.32]{Pat99}):
\[
\vol_{\!\alpha_{g}}(S_g)=a_m\vol_{\!g}(M),\qquad a_m:=m!\vol_{\!\R^m}(B^m)
\]
This suggests a way to define the volume of $M$ with respect to a Finsler metric:
\[
\vol_{\!F}(M)=\frac{1}{a_m}\vol_{\!\alpha_F}(S_F).
\]
Consequently, we get the notion of (weak) systolic ratio for $F$:
\[
\sigma(F):=\frac{\mathrm{sys}(F)^m}{\vol_{\!F}(M)},\qquad \rho(F):=\frac{\widetilde{\mathrm{sys}}(F)^m}{\vol_{\!F}(M)},
\]
where $\mathrm{sys}(F)$ is the length of the shortest periodic non-contractible $F$-geodesic and $\widetilde{\mathrm{sys}}(F)$ is the length of the shortest non-constant, periodic $F$-geodesic.
 
In particular, we see that the weak systolic ratio of $F$ and the systolic ratio of $\alpha_F$ differ only by a dimensional constant:
\[
\sigma(F)\geq\rho(F)=a_m\rho(\alpha_F).
\]
In particular, $\rho$ is bounded on the set $\{\alpha_g\ |\ g\in\mathcal R(M)\}$ when the Riemannian (weak) systolic inequality holds on $M$ (see the first talk). This should be compared with Corollary \ref{c:unboundedcot}. 

For each $x\in M$, the set $T_x^*M\cap S_g$ is the boundary of an ellipsoid in $T_x^*M$ since $H_g|_{T_x^*M}$ is a quadratic form. Thus, it seems reasonable that using John--Loewner ellipsoids on every $T_x^*M$, one obtains a systolic inequality also for Finsler metrics provided the Riemannian systolic inequality holds.
\begin{thm}[\'Alvarez-Paiva--Balacheff--Tzanev 2016 \cite{ABT16}]
If the Riemannian systolic inequality holds on $M$, then the systolic ratio $\sigma(F)$ is bounded on the set of Finsler metrics on $M$. In particular, the contact systolic ratio is bounded on
\[
\{\alpha_S\ |\ S\subset T^*M\ \text{fibrewise convex }\}.
\]
Furthermore, the weak systolic ratio $\rho(F)$ is bounded on the set of Finsler metrics on $S^2$. In particular, the contact systolic ratio is bounded on $\{\alpha_S\ |\ S\subset T^*S^2\ \text{fibrewise convex }\}$.
\end{thm}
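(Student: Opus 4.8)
The plan is to deduce the Finsler systolic inequality from the Riemannian one by a fibrewise convex-geometric comparison: inside each codisc $B^*_{F,x}:=\{p\in T^*_xM : F^*(p)\le 1\}$ one chooses an ellipsoid, and the resulting Riemannian metric turns out to be bi-Lipschitz to $F$ with a constant depending only on $m$; since both the systole and the (Holmes--Thompson) volume behave monotonically, and almost multiplicatively, under such a comparison, the bound follows from the Riemannian hypothesis. A preliminary reduction disposes of the possible asymmetry of $F$. Set $\bar F(v):=\max\{F(v),F(-v)\}$, a reversible Finsler norm with $F\le\bar F$, so $\mathrm{sys}(F)\le\mathrm{sys}(\bar F)$; on the dual side $B^*_{\bar F,x}=\overline{\mathrm{conv}}\big(B^*_{F,x}\cup(-B^*_{F,x})\big)\subset B^*_{F,x}-B^*_{F,x}$, so the Rogers--Shephard inequality gives $\vol_{\!\R^m}(B^*_{\bar F,x})\le\binom{2m}{m}\vol_{\!\R^m}(B^*_{F,x})$ pointwise. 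By the volume formula \eqref{e:vol}, which expresses $\vol_{\!\alpha_F}(S_F)$ as $m!$ times the symplectic volume of the unit codisc bundle $\{F^*\le 1\}$, i.e.\ (in any chart) as a fixed constant times $\int_M\vol_{\!\R^m}(B^*_{F,x})\,\mathrm dq$, the pointwise estimate integrates to $\vol_{\!\bar F}(M)\le\binom{2m}{m}\vol_{\!F}(M)$, hence $\sigma(F)\le\binom{2m}{m}\sigma(\bar F)$; so it suffices to bound $\sigma$ on reversible Finsler metrics.

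So let $\bar F$ be reversible. Each $B^*_{\bar F,x}\subset T^*_xM\cong\R^m$ is then an origin-symmetric convex body, and John's theorem provides a unique, hence continuous in $x$, origin-centred ellipsoid $E_x$ with $E_x\subset B^*_{\bar F,x}\subset\sqrt m\,E_x$. Let $g$ be the Riemannian metric on $M$ whose fibrewise unit codiscs are the $E_x$. Passing to dual norms on $TM$ the inclusions become $|\cdot|_g\le\bar F\le\sqrt m\,|\cdot|_g$, so $\ell_g\le\ell_{\bar F}\le\sqrt m\,\ell_g$ on curves and therefore $\mathrm{sys}(\bar F)\le\sqrt m\,\mathrm{sys}(g)$; and since $\vol_{\!g}(M)$ is the \emph{same} constant multiple of $\int_M\vol_{\!\R^m}(E_x)\,\mathrm dq$ (for a Riemannian metric the Holmes--Thompson volume equals the Riemannian one), the inclusion $E_x\subset B^*_{\bar F,x}$ integrates to $\vol_{\!g}(M)\le\vol_{\!\bar F}(M)$. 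Combining everything,
\[
\sigma(F)\ \le\ \binom{2m}{m}\sigma(\bar F)\ \le\ \binom{2m}{m}m^{m/2}\sigma(g)\ \le\ \binom{2m}{m}m^{m/2}\,C ,
\]
where $C$ is the Riemannian systolic constant of $M$ furnished by the hypothesis. This proves the first assertion, and the contact statements are immediate: a fibrewise convex hypersurface $S\subset T^*M$ equals $S_F$ for some Finsler metric $F$, and $\rho(\alpha_S)=\rho(\alpha_F)=\rho(F)/a_m\le\sigma(F)/a_m$, which is bounded by the above.

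For the last assertion one runs the same scheme on $S^2$ with the weak systole, the one new point being that $\widetilde{\mathrm{sys}}$ is the length of a closed \emph{geodesic}, hence not obviously monotone in the metric. I would bridge this with the $1$-cycle sweepout width $W(F):=\inf_{\{\gamma_t\}}\sup_t\ell_F(\gamma_t)$: Lyusternik--Schnirelmann min--max applied to the $F$-geodesic flow on $S^2$ produces a closed $F$-geodesic of length $W(F)$, so $\widetilde{\mathrm{sys}}(F)\le W(F)$; and $W$ \emph{is} monotone in the norm, so $W(F)\le W(\bar F)\le\sqrt m\,W(g)$ with $g$ as above. The Croke--Rotman inequality — more precisely, the bound on the $1$-cycle width that its proof extracts from an explicit sweepout on $S^2$ — gives $W(g)^2\le C'\,\vol_{\!g}(M)$, and chaining with $\vol_{\!g}(M)\le\vol_{\!\bar F}(M)\le 6\,\vol_{\!F}(M)$ (Rogers--Shephard with $m=2$) yields $\rho(F)=\widetilde{\mathrm{sys}}(F)^2/\vol_{\!F}(M)\le 12C'$, whence also $\rho(\alpha_S)\le 12C'/a_2$ for fibrewise convex $S\subset T^*S^2$.

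I expect the main difficulties to be technical rather than conceptual. First, the John ellipsoid field $x\mapsto E_x$ is only continuous, so one must either check that the Riemannian systolic inequality still applies to a merely continuous metric, or smooth the ellipsoid field at the cost of replacing $\sqrt m$ by $\sqrt m+\epsilon$ and letting $\epsilon\to 0$. Second, one must nail down the precise proportionality between the contact volume $\vol_{\!\alpha_F}(S_F)$ and the fibrewise Euclidean volumes $\vol_{\!\R^m}(B^*_{F,x})$, keeping track of how the individual factors transform under coordinate changes on $M$ — this is the exact point where the symplectic and the convex-geometric sides of the statement meet. Finally, in the $S^2$ case the genuine input is that Croke--Rotman bound the \emph{width}, not merely the shortest closed geodesic; the absence of any comparable Riemannian bound in dimension $\ge 3$ is precisely why the weak-systole statement is confined to $S^2$.
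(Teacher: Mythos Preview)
The paper does not actually prove this theorem; it only cites \cite{ABT16} and, just before the statement, gives the one-sentence hint that ``using John--Loewner ellipsoids on every $T_x^*M$, one obtains a systolic inequality also for Finsler metrics provided the Riemannian systolic inequality holds.'' Your proposal is precisely a correct fleshing-out of that hint: the fibrewise John ellipsoid gives a Riemannian metric bi-Lipschitz to the (symmetrised) Finsler metric with constant depending only on $m$, and monotonicity of the systole and of the Holmes--Thompson volume under norm comparison finishes the job. The technical caveats you raise---continuity of the ellipsoid field, the chart-independence of the volume formula, and the need to invoke a sweepout-width bound rather than merely the shortest-closed-geodesic bound on $S^2$---are exactly the points one must check, and all are standard.
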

\begin{rmk}
Observe that the supremum of $\sigma$ on the space of Finsler metrics on $M$ can be strictly bigger than the supremum on the space of Riemannian metrics and even if they coincide there might be maximizing Finsler metrics which are not Riemannian. The first scenario happens on $T^2$ where the supremum is $\pi/2>\sqrt{3}/2$ (This is proved using the solution of the Mahler conjecture in dimension two!). The optimal Finsler metric is constructed as follows: i) Consider the constant Finsler metric over a parallelogram $Q\subset\R^2$ whose unit sphere is given by the very same parallelogram $Q$; ii) obtain a metric on $T^2$ by gluing the opposite sides of the parallelogram $Q$ by translation. \cite[Theorem 12.1]{Sab10}. The second scenario happens on $P^2$ where the supremum is still $\pi/2$ and the optimal metrics are the Zoll Finsler metrics, which build an infinite dimensional space \cite[Theorem 13.1]{Sab10} (compare this with the uniqueness of Zoll Riemannian metrics on $P^2$).
\end{rmk}
\subsection{Zoll contact forms}
We have seen in Theorem \ref{t:unbounded} that $\rho$ is unbounded on $\mathcal C(\xi)$ for every contact manifold $(S,\xi)$. Thus, there is no contact form maximizing the systolic ratio \textit{globally}. It is then natural to ask, if there are contact forms which maximize the systolic ratio \textit{locally}.
\begin{thm}[\'Alvarez-Paiva--Balacheff 2014 \cite{AB14}]\label{t:Zollcont1}
Let $(S,\xi)$ be a contact manifold and $\alpha_0\in \mathcal C(\xi)$ be such that $\infty>\rho(\alpha_0)\geq \rho(\alpha)$ for all $\alpha\in\mathcal C(\xi)$ in a neighborhood of $\alpha_0$. Then, $\alpha_0$ is \textbf{Zoll}, namely all Reeb orbits of $\alpha_0$ are periodic and with the same minimal period.
\end{thm}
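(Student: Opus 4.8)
The plan is to argue by contradiction: assume $\alpha_0$ is a local maximiser of $\rho$ on $\mathcal C(\xi)$ with $\rho(\alpha_0)<\infty$ but that $\alpha_0$ is \emph{not} Zoll, and produce $\alpha\in\mathcal C(\xi)$ arbitrarily $C^\infty$-close to $\alpha_0$ with $\rho(\alpha)>\rho(\alpha_0)$. Every perturbation will have the shape $\alpha=(1-h)\alpha_0$ with $0\le h<1$ smooth and supported in a small open set $U$, and two preliminary facts make such perturbations tractable. First, $\ker\alpha=\ker\alpha_0=\xi$, and since $\alpha_0\wedge\alpha_0=0$ kills every term containing the factor $\dd h\wedge\alpha_0$, one gets the pointwise identity $\alpha\wedge(\dd\alpha)^{m-1}=(1-h)^m\,\alpha_0\wedge(\dd\alpha_0)^{m-1}$; hence $\alpha$ is again a contact form supporting $\xi$, and $\vol_{\!\alpha}(S)<\vol_{\!\alpha_0}(S)$ as soon as $h\not\equiv0$. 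Second, on the open set $S\setminus\op{supp} h$ both $h$ and $\dd h$ vanish, so there $R_\alpha=R_{\alpha_0}$: outside $\op{supp} h$ the Reeb orbits of $\alpha$ run along Reeb orbits of $\alpha_0$, and on such arcs the integral of $\alpha$, the integral of $\alpha_0$, and the elapsed $\alpha_0$-Reeb time all agree. So everything reduces to placing $\op{supp} h$ cleverly and then checking $T_{\min}(\alpha)\ge T_{\min}(\alpha_0)$.

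Here is where ``not Zoll'' enters. Rescaling (which leaves $\rho$ unchanged) I may assume $T_{\min}(\alpha_0)=1$; since $\rho(\alpha_0)<\infty$ there is a closed Reeb orbit and $1$ is the minimum of all periods. If $\alpha_0$ were Zoll, every point of $S$ would lie on a closed orbit of period $1$; as it is not, I fix $p\in S$ lying on no closed Reeb orbit of period $1$, i.e. (by minimality of $1$) with $\Phi^t_{\alpha_0}(p)\ne p$ for every $t\in(0,1]$. A compactness argument then produces a flow box $U\ni p$ for $R_{\alpha_0}$ — coordinates $(z,w)\in(-1,1)\times D$ on $U$ with $R_{\alpha_0}=\partial_z$ — with the extra \emph{non-returning} property: every $\alpha_0$-Reeb orbit, once it has left $\overline U$, stays outside $U$ for $\alpha_0$-Reeb time at least $1$. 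The point is that if no box around $p$ had this property, one would find $\alpha_0$-Reeb orbits returning to arbitrarily small flow boxes around $p$ within $\alpha_0$-time $\le1$, and passing to a limit would force $\Phi^t_{\alpha_0}(p)=p$ for some $t\in(0,1]$. This non-returning flow box statement is the one genuinely delicate step; everything after it is bookkeeping.

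Now take $h=\epsilon\chi$ with $\chi$ a fixed bump function supported in $U$, $0\le\chi\le1$, $\chi\not\equiv0$, and $\epsilon>0$ small, and set $\alpha=(1-\epsilon\chi)\alpha_0$. Because $\epsilon$ is small in the $C^1$-norm, $R_\alpha$ is $C^0$-close to $\partial_z$ throughout $U$, so $\dd z(R_\alpha)>0$ there: $z$ increases strictly along $\alpha$-Reeb orbits inside $U$, hence no closed $\alpha$-Reeb orbit is contained in $U$, and any $\alpha$-Reeb orbit entering $U$ leaves it again. Let $\gamma$ be any closed $\alpha$-Reeb orbit, of period $T=\int_\gamma\alpha$. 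If $\gamma$ misses $U$, then it misses $\op{supp} h$, so $\gamma$ is a closed $\alpha_0$-Reeb orbit and $T=\int_\gamma\alpha_0\ge1$. Otherwise $\gamma$ enters $U$ and so leaves $\overline U$ at some moment; from then on $\gamma$ lies outside $\op{supp} h\subseteq U$ and hence coincides with an $\alpha_0$-Reeb orbit which, by the non-returning property, stays outside $U$ for $\alpha_0$-time at least $1$, along which arc $\int\alpha$ equals the elapsed $\alpha_0$-time; since that arc is part of $\gamma$, again $T=\int_\gamma\alpha\ge1$. Thus $T_{\min}(\alpha)\ge1=T_{\min}(\alpha_0)$ (and if $\alpha$ has no closed Reeb orbit at all, $\rho(\alpha)=\infty$, which only helps).

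Combining $T_{\min}(\alpha)\ge T_{\min}(\alpha_0)$ with $\vol_{\!\alpha}(S)<\vol_{\!\alpha_0}(S)$ gives $\rho(\alpha)=T_{\min}(\alpha)^m/\vol_{\!\alpha}(S)>T_{\min}(\alpha_0)^m/\vol_{\!\alpha_0}(S)=\rho(\alpha_0)$, while $\alpha\to\alpha_0$ in $C^\infty$ as $\epsilon\to0$. This contradicts the local maximality of $\alpha_0$, so $\alpha_0$ is Zoll. The main obstacle in carrying this out is precisely the non-returning flow box lemma: one must verify that shrinking the box around $p$ forces the first return time, uniformly over all Reeb orbits meeting the box, up to at least $T_{\min}(\alpha_0)$, and not merely for the orbit through $p$ itself.
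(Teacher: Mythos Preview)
Your proof is correct and follows the same strategy as the paper's (which presents it as an exercise with hints): multiply $\alpha_0$ by $1-\epsilon\chi$ with $\chi\ge0$ supported away from all period-$T_{\min}(\alpha_0)$ orbits, check that the contact volume strictly drops while $T_{\min}$ does not, and conclude. The only difference is how $T_{\min}$ is controlled: the paper takes $U$ to be merely an open set containing no point that is periodic with period $\le T_{\min}(\alpha_0)$ and argues by contradiction via a limit (a sequence of short closed $\alpha_{s_j}$-orbits with $s_j\to0$ must meet $\op{supp}\chi$ and, by Arzel\`a--Ascoli, subconverges to a closed $\alpha_0$-orbit of period $\le T_{\min}(\alpha_0)$ through $U$), whereas you impose the stronger non-returning property and bound each closed $\alpha$-orbit directly. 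Your non-returning lemma is provable by the limiting argument you sketch; just note that the limit $t_\infty=0$ must be excluded separately, which follows because in a fixed ambient flow box around $p$ the $\alpha_0$-orbits are straight segments in the $z$-direction and hence cannot exit and re-enter a smaller sub-box in arbitrarily short time. Your pointwise identity $\alpha\wedge(\dd\alpha)^{m-1}=(1-h)^m\,\alpha_0\wedge(\dd\alpha_0)^{m-1}$ is a slightly cleaner way to see the volume drop than the paper's derivative computation.
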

\begin{ex}
Prove the theorem filling in the details of the following argument. Suppose that $\alpha_0$ is not Zoll. Then, there is an open set $U\subset S$ such that for every $z\in U$, $\Phi_{R_\alpha}^t(z)\neq z$, for all $t\in(0,T_{\min}(\alpha_0)]$. Consider a non-negative function $f:S\to[0,\infty)$ supported in $U$ and not identically zero. Show that there exists $\epsilon>0$ such that $\alpha_s:=(1+sf)\alpha_0$ satisfies $T_{\min}(\alpha_s)= T_{\min}(\alpha_0)$ (use an argument by contradiction). Finally, use the formula for the volume to show that 
\[
\frac{\dd}{\dd s}\Big|_{s=0}\vol_{\!\alpha_s}(S)=\int_Sf\alpha_0\wedge(\dd\alpha_0)^{m-1}>0.
\]  
Deduce that $\rho(\alpha_s)>\rho(\alpha_0)$ for small negative values of $s$.
\end{ex}
We already encountered many examples of Zoll contact forms. The simplest is $\alpha_{S^{2m-1}}$ the standard contact form on the unit sphere $S^{2m-1}\subset\R^{2m}$. This follows from Remark \ref{r:harm}, Exercise \ref{e:harm} and Example \ref{exa:harm} and we get more precisely \begin{equation}\label{e:hopf}
\Phi_{\alpha_{S^{2m-1}}}(z)=e^{2it}\cdot z,\qquad z\in S^{2m-1}\subset\R^{2m}\cong\C^m.
\end{equation}
Moreover, Theorem \ref{t:perper} shows that $g\in\mathcal R(M)$ is a Zoll Riemannian metric if and only if $\alpha_g$ is a Zoll contact form (the same holds, more generally, for Finsler metrics $F$ on $M$).

Contrary to Zoll Riemannian metrics, the local structure of the space of Zoll contact forms is completely understood. Let $\alpha_0$ be a Zoll contact form and consider a path $s\mapsto \alpha_s$ of contact forms with $s\in(-\epsilon,\epsilon)$. Then, all the $\alpha_s$ are contact forms if and only if there exists a path of diffeomorphisms $s\mapsto\Psi_s:S\to S$ and a path of positive numbers $s\mapsto b_s$ with $\Psi_0=\mathrm{id}$ and $b_0=1$ such that $\alpha_s=b_s\Psi_s^*\alpha_0$. This means that all the deformations of Zoll contact forms are trivially given by rescalings and change of coordinates. On the other hand, we have seen that on $S^2$ there are Zoll metrics obtained deforming the metric of constant curvature $g_*$ which do not have constant curvature. In particular, $s\mapsto \rho(\alpha_s)=\rho(\alpha_0)$ is constant along a path of Zoll contact forms by \eqref{e:tvrho}. The value of $\rho$ at a Zoll contact form has a nice topological interpretation, which gives us a way to classify Zoll contact forms. Indeed, let $\alpha_0$ be Zoll and rescale it so that the minimal period of $\alpha_0$ is $1$. Then the Reeb flow of $\tfrac{1}{T_0}\alpha_0$ induces a free action of the circle $S^1=\R/\Z$ on $S$. We get a quotient map $\mathfrak p:S\to Q$, which is a principal $S^1$-bundle over a $2m-2$-dimensional closed manifold $Q$. The one-form $\alpha_0$ is a connection for this manifold and hence there exists a closed two-form $\kappa_0$ on $Q$, the curvature of $\alpha_0$, with the property that $\dd\alpha_0=\mathfrak p^*\kappa_0$. Since $\alpha_0$ is a contact form $\kappa_0$ is a symplectic form and, being the curvature of the connection, its de Rham cohomology class $e:=[\kappa_0]\in H^2(Q;\Z)$ is the Euler class of $\mathfrak p$. An application of Fubini, then yields
\[
\vol_{\!\alpha_0}(S)=\int_Q\Big(\int_{S^1}\alpha_0\Big)\beta_0^{m-1}=\langle e^{m-1},[Q]\rangle\in\N,
\] 
where $[Q]$ is the fundamental class of $Q$. Thus,
\[
\rho(\alpha_0)=\langle e^{m-1},[Q]\rangle^{-1}.
\]
\begin{rmk}
Let $\alpha_{S^{2m-1}}$ be the contact form induced on $S^{2n-1}\subset\R^{2m}$. By \eqref{e:hopf}, we know that $\alpha_{S^{2m-1}}$ is Zoll with minimal period $\pi$. The quotient by the Reeb flow is the Hopf fibration $\mathfrak p:S^{2m-1}\to P^m_\C$ over the complex projective space and the curvature form is $\tfrac1\pi$-times the Fubini--Study Kähler form.
\end{rmk}
\begin{ex}
Let $g_*$ the round metric on $S^2$ and let $\mathfrak p\colon U^{g_*}S^2\to Q$ be the bundle map obtained quotienting by the Reeb flow of the Zoll contact form $\alpha_{g_*}$. Show that $Q=S^2$ and that $\mathfrak p(\flat \dot\gamma)$ is the center of the great circle parametrized by $\dot\gamma$. 
\end{ex}
An important observation of Boothby and Wang \cite{BW58} is that the above construction can be reversed. If we start with a closed $(2m-2)$-dimensional symplectic manifold $(Q,\kappa_0)$ such that $[\kappa_0]$ is an integral homology class, then there is an $S^1$-principal bundle $\mathfrak p:S\to Q$ and a connection form $\alpha_0$ on $S$ with curvature $\kappa_0$. Since $\kappa_0$ is symplectic, it follows that $\alpha_0$ is a contact form which is Zoll since its Reeb flow induces the $S^1$-action of $\mathfrak p$. If $m=2$, then $Q$ is an orientable surface and we have symplectic forms $\kappa_0$ on $Q$ with any given non-zero class in $H^2(Q;\Z)\cong\Z$. Thus, we get Zoll contact forms on the total space of every non-trivial principal $S^1$-bundle over an orientable closed surface. In particular, there is a Zoll contact form on $U^gQ$ if $Q$ is orientable and different from the two-torus, but there is no Zoll Riemannian metric on $Q$ for genus at least two. Moreover, the systolic ratio of a Zoll contact form on $S$ is independent of the Zoll contact form if $S$ is the total space of a unique principal $S^1$-bundle, up to isomorphism. Indeed, this is what happens when $Q$ is a surface and we recover Theorem \ref{t:wei} by Weinstein, asserting that all Zoll Riemannian metrics on $S^2$ have weak systolic ratio equal to $\pi$.
\begin{ex}
Let $S$ be a closed orientable three-manifold and suppose that there exist principal $S^1$-bundles $\mathfrak p:S\to Q$ and $\mathfrak p':S\to Q'$ for some surfaces $Q,Q'$. Show that $Q$ and $Q'$ are diffeomorphic and that $\mathfrak p$ and $\mathfrak p'$ have the same Euler number, hence they are isomorphic. Hint: use the Gysin sequence of the bundles $\mathfrak p$ and $\mathfrak p'$ to compute the first integral homology of $S$.
\end{ex}
\subsection{Zoll contact forms are local maximizers of the systolic ratio}\label{ss:final}
Theorem \ref{t:Zollcont1} makes us wonder if the converse of its statement is true. To discuss such a converse is the goal of this last subsection.
\begin{thm}[Abbondandolo--Benedetti 2019 \cite{AB19}]\label{t:final}
Let $(S,\xi)$ be a contact manifold and let $\alpha_0\in\mathcal C(\xi)$ be Zoll. If $\alpha\in\mathcal C(\xi)$ is sufficiently $C^3$-close to $\alpha_0$, then
\[
\rho(\alpha)\leq \rho(\alpha_0)\quad\text{and}\quad\Big(\rho(\alpha)=\rho(\alpha_0)\iff \alpha \textrm{ is Zoll}\,\Big).
\]
\end{thm}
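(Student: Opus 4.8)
The plan is to follow the two-step scheme highlighted in Remark \ref{r:important}: put $\alpha$ into a normal form adapted to the Zoll form $\alpha_0$, and then run a minimum--mean inequality over the family of closed Reeb orbits of $\alpha_0$, which foliates $S$ and plays the role of the horizontal geodesics in Loewner's proof. First I would use the scaling behaviour of $T_{\min}$ and $\vol_{\!\alpha}$ to normalise $T_{\min}(\alpha_0)=1$, so that the Reeb flow of $\alpha_0$ generates a free circle action and a principal $S^1$-bundle $\mathfrak p\colon S\to Q$ over a closed $(2m-2)$-manifold $Q$, with $\alpha_0$ a connection form, $\dd\alpha_0=\mathfrak p^*\kappa_0$ for a symplectic form $\kappa_0$ on $Q$, and $N:=\rho(\alpha_0)^{-1}=\langle[\kappa_0]^{m-1},[Q]\rangle=\vol_{\!\alpha_0}(S)\in\N$. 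The closed $\alpha_0$-Reeb orbits are exactly the fibres $F_q=\mathfrak p^{-1}(q)$, each of period $1$, and these form the foliating family on which the final inequality is run.

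The main obstacle is the normal-form step. Starting from $\alpha$ that is $C^3$-close to $\alpha_0$, I would average over the circle action, $\bar\alpha:=\int_0^1(\Phi_{R_{\alpha_0}}^t)^*\alpha\,\dd t$; for $\alpha$ close enough this is again a contact form, it is $S^1$-invariant, hence of the shape $\bar\alpha=(W\circ\mathfrak p)\,\alpha_0+\mathfrak p^*\beta$ with $W\colon Q\to(0,\infty)$ close to $1$ and $\beta$ a small $1$-form on $Q$, and it satisfies $W(q)=\int_{F_q}\bar\alpha=\int_{F_q}\alpha$, the $\alpha$-action of the fibre $F_q$. A Moser-type deformation then removes the $\mathfrak p^*\beta$-part by a fibrewise gauge transformation; the delicate point, and the reason one needs $C^3$ rather than $C^2$ control, is to pass from $\bar\alpha$ back to $\alpha$, i.e.\ to produce a diffeomorphism $\Psi$ of $S$, $C^2$-close to the identity and hence preserving $\rho$ by \eqref{e:tvrho}, such that $\Psi^*\alpha$ agrees with $(W\circ\mathfrak p)\,\alpha_0$ closely enough that the two computations below are unaffected. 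In the model situation one simply reduces, up to diffeomorphism and rescaling, to $\alpha=(W\circ\mathfrak p)\,\alpha_0$, and I would carry out the argument there first and then control the residual error.

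Next come the two estimates. For the systole I would use the reduced action functional on $Q$ obtained by integrating $\alpha$ along perturbations of the fibres: its critical points are exactly the closed $\alpha$-Reeb orbits lying near the foliation and their critical values are the corresponding actions — in the model this functional is $W$ itself and a closed orbit near $F_q$ exists precisely when $\dd W_q=0$. Since $\alpha$ is $C^3$-close to $\alpha_0$, a Gronwall/shadowing argument shows that the globally shortest closed Reeb orbit of $\alpha$ is one of these, so $T_{\min}(\alpha)=\min_Q W$ (the global minimum of $W$ is attained, hence critical). For the volume, in the model $\dd\alpha=\mathfrak p^*(\dd W)\wedge\alpha_0+(W\circ\mathfrak p)\,\mathfrak p^*\kappa_0$; since $\alpha_0\wedge\alpha_0=0$ the $\dd W$-terms drop out of $\alpha\wedge(\dd\alpha)^{m-1}=(W\circ\mathfrak p)^m\,\alpha_0\wedge\mathfrak p^*\kappa_0^{m-1}$, and Fubini over $\mathfrak p$ (using $\int_{F_q}\alpha_0=1$) gives
\[
\vol_{\!\alpha}(S)=\int_Q W^m\,\kappa_0^{m-1}\ \ge\ \Big(\min_Q W\Big)^m\int_Q\kappa_0^{m-1}=N\Big(\min_Q W\Big)^m ,
\]
the minimum--mean estimate replacing the Arithmetic--Quadratic Mean step of Loewner, with exponent $m$ in place of $2$. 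Combining the two, $\rho(\alpha)=T_{\min}(\alpha)^m/\vol_{\!\alpha}(S)\le(\min_Q W)^m/\big(N(\min_Q W)^m\big)=1/N=\rho(\alpha_0)$.

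Finally, the equality case. If $\rho(\alpha)=\rho(\alpha_0)$ then the volume estimate is an equality, and since $\kappa_0^{m-1}$ is a positive volume form on $Q$ this forces $W$ to be constant; then every point of $Q$ is a critical point of the reduced action, so in the normal form every fibre is a closed $\alpha$-Reeb orbit of the same period and $\alpha$ is Zoll. Conversely, a Zoll $\alpha$ near $\alpha_0$ is of the shape $b\,\Psi^*\alpha_0$ by the local description of the space of Zoll contact forms discussed above, so $\rho(\alpha)=\rho(\alpha_0)$ by \eqref{e:tvrho}. Everything beyond the normal-form step is essentially formal; that step — extracting from $C^3$-closeness a model for $\alpha$ precise enough that both the systole identity $T_{\min}(\alpha)=\min_Q W$ and the volume identity $\vol_{\!\alpha}(S)=\int_Q W^m\kappa_0^{m-1}$ hold (with errors that do not spoil the inequality or the equality discussion) — is where all the work lies.
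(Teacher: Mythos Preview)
Your proposal is correct and follows essentially the same route as the paper's sketch: a normal form reducing (in the model case) to $\beta=(\psi\circ\mathfrak p)\alpha_0$, identification of the fibres over critical points of $\psi$ as periodic Reeb orbits, the Fubini computation of the volume as $\int_Q\psi^m\kappa_0^{m-1}$, and then the minimum--mean inequality on $Q$. One simplification worth noting: you do not need the Gronwall/shadowing claim that the shortest closed orbit is near a fibre and hence $T_{\min}(\alpha)=\min_Q W$; the fibre over a global minimum $q_*$ of $W$ is itself a periodic Reeb orbit of period $W(q_*)=\min W$, so the trivial upper bound $T_{\min}(\alpha)\le\min_Q W$ already suffices for the inequality, and in the equality case both inequalities in the chain $T_{\min}^m\le(\min W)^m\le\rho(\alpha_0)\vol_{\!\alpha}(S)$ are simultaneously forced to be equalities, which gives $W$ constant without any a priori control on where the shortest orbit lives.
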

\begin{rmk}
The theorem above was already known in several important special cases. \`Alvarez-Paiva and Balacheff showed in \cite{AB14} that $\alpha_0$ is a strict local minimizer along paths $s\mapsto\alpha_s$ which are not tangent to the space of Zoll contact forms at every order. Abbondandolo, Bramham, Hryniewicz and Salom\~ao proved the theorem when $S$ is a three-dimensional manifold such that its quotient by the Reeb flow of the contact form is $S^2$ \cite{ABHS18}. As an immediate consequence, they got Theorem \ref{t:abhs18} on the local systolic maximality of Zoll Riemannian metrics on $S^2$. Finally, Benedetti and Kang proved the theorem more generally when $S$ has dimension three \cite{BK20}.
\end{rmk}
As a Corollary of \ref{t:final}, we immediately get Theorem \ref{t:riemzoll} (and an analogous statement for Finsler metrics), and Theorem \ref{t:wV} about the local weak Viterbo conjecture (after a short argument taking care of the equality case).

The proof of Theorem \ref{t:final} follows a scheme similar to Theorem \ref{t:loewner} about the systolic inequality on $T^2$ that we sketched in the first talk and summarized in Remark \ref{r:important}. If $\alpha\in\mathcal C(\xi)$ is a contact form close to $\alpha_0$ Zoll, the first step is to find a diffeomorphism $\Psi:S\to S$ such that $\beta:=\Psi^*\alpha$ is in a suitable normal form. The second step is to apply an inequality between power means to a foliation of loops on the manifold. In Theorem \ref{t:loewner}, we used the inequality between the minimum, the arithmetic mean and the quadratic mean. Here we get away just with the inequality between the minimum and the arithmetic mean. If $f:X\to(0,\infty)$ is a continuous function on a compact topological space $X$ and $\mu$ is a non-zero, finite Borel measure on $X$, then
\begin{equation}\label{e:mean}
\min f\leq \tfrac{1}{\mu(X)}\int_Xf\dd\mu\quad\text{and}\quad\Big(\min f=\tfrac{1}{\mu(X)}\int_Xf\dd\mu\iff f \textrm{ is constant}\,\Big).
\end{equation}
Describing the exact normal form for $\beta$ will lead us too far away and we refer to \cite[Theorem 2]{AB19} for more details. Here, we will treat the case in which
\begin{equation}\label{e:beta}
\beta=(\psi\circ\mathfrak p)\alpha_0,\qquad \psi:Q\to(0,\infty),
\end{equation}
namely $\beta$ is a multiple of $\alpha_0$ by a function which is constant along the Reeb flow of $\alpha_0$. In the general case that we do not treat here, $\beta=(\psi\circ\mathfrak p)\alpha+\delta$, where $\delta$ is a reminder with certain good properties.

The crucial observation now is to check which Reeb orbits of $\alpha_0$ are still Reeb orbits of $\beta$ possibly with a different time parametrization. To this purpose, we need to have
\begin{equation}\label{e:dbeta0}
\dd\beta(R_{\alpha_0},\cdot)=0.
\end{equation}
Thus, let us take the exterior differential in \eqref{e:beta}:
\begin{equation}\label{e:dbeta}
\dd\beta=(\mathfrak p^*\dd\psi)\wedge\alpha_0+\mathfrak p^*(\psi\kappa_0).
\end{equation}
Recall now that the Reeb orbits of $\alpha_0$ are exactly the fibers of $\mathfrak p$, thus $\dd\mathfrak p\cdot R_{\alpha_0}$ and, by \eqref{e:dbeta}, the condition \eqref{e:dbeta0} is equivalent to
\[
0=(\mathfrak p^*\dd\psi)\wedge\alpha_0(R_{\alpha_0},\cdot)+\mathfrak p^*(\psi\kappa_0)(R_{\alpha_0},\cdot)=-(\mathfrak p^*\dd\psi).
\]
Thus, we see that for all $q\in Q$, the fibre $\mathfrak p^{-1}(q)$ is a periodic orbit of $R_\beta$ if and only if $\dd_q\psi=0$, namely $q$ is a critical point of $\psi$. In this case, the period of the periodic orbit for $R_\beta$ is
\[
\int_{\mathfrak p^{-1}(q)}\beta=\int_{\mathfrak p^{-1}(q)}(\psi\circ\mathfrak p)\alpha_0=\psi(q)\int_{\mathfrak p^{-1}(q)}\alpha_0=\psi(q)\cdot 1=\psi(q).
\]
Choosing $q=q_*$, a minimum for the function $\psi$, we obtain
\begin{equation}\label{e:tmin}
T_{\min}(\beta)\leq \min \psi.
\end{equation}
We estimated the minimal period in terms of $\psi$. Now we compute the volume in terms of $\psi$ using Fubini. Omitting $\mathfrak p$ from the notation, we get
\[
\vol_{\!\beta}(S)=\int_S\psi\alpha_0\wedge(\dd(\psi\alpha_0))^{m-1}=\int_S\psi^m\alpha_0\wedge(\dd\alpha_0)^{m-1}=\int_Q\!\Big(\int_{S^1}\!\!\psi^m\alpha_0\Big)\kappa_0^{m-1}=\int_Q\!\psi^m\kappa_0^{m-1}.
\]
Therefore, recalling that $\int_Q\kappa_0^{m-1}=\langle e^{m-1},[Q]\rangle=\rho(\alpha_0)^{-1}$, we get
\[
T_{\min}(\beta)^m\leq\min\psi^m\leq\frac{1}{\langle e^{m-1},[Q]\rangle}\int_Q\psi^m\kappa_0^{m-1}=\rho(\alpha_0)\vol_{\!\beta}(S),
\]
where the first inequality is \eqref{e:tmin} and the second is \eqref{e:mean}. Thus, $\rho(\alpha)=\rho(\beta)\leq \rho(\alpha_0)$ and, if equality holds, then it must hold also in \eqref{e:mean}, which mean that $\psi$ is constant. This implies that $\beta$ and hence $\alpha$ are Zoll.
\bibliographystyle{amsalpha}
\bibliography{DDTG_bib}

\providecommand{\bysame}{\leavevmode\hbox to3em{\hrulefill}\thinspace}
\providecommand{\MR}{\relax\ifhmode\unskip\space\fi MR }
\providecommand{\MRhref}[2]{%
  \href{http://www.ams.org/mathscinet-getitem?mr=#1}{#2}
}
\providecommand{\href}[2]{#2}
\begin{thebibliography}{ABHS18b}

\bibitem[AAKO14]{AKO14}
Shiri Artstein-Avidan, Roman Karasev, and Yaron Ostrover, \emph{From symplectic
  measurements to the {M}ahler conjecture}, Duke Math. J. \textbf{163} (2014),
  no.~11, 2003--2022.

\bibitem[AAMO08]{AMO08}
Shiri Artstein-Avidan, Vitali Milman, and Yaron Ostrover, \emph{The
  {$M$}-ellipsoid, symplectic capacities and volume}, Comment. Math. Helv.
  \textbf{83} (2008), no.~2, 359--369.

\bibitem[AB19]{AB19}
Alberto Abbondandolo and Gabriele Benedetti, \emph{On the local systolic
  optimality of {Z}oll contact forms}, preprint available at
  \href{https://arxiv.org/abs/1912.04187}{arXiv:1912.04187}, 2019.

\bibitem[ABHS18a]{ABHS18c}
Alberto Abbondandolo, Barney Bramham, Umberto Hryniewicz, and Pedro A.~S.
  Salom{\~a}o, \emph{Sharp systolic inequalities for {R}iemannian and {F}insler
  spheres of revolution}, preprint available at
  \href{https://arxiv.org/abs/1808.06995}{arXiv:1808.06995}, 2018.

\bibitem[ABHS18b]{ABHS18}
Alberto Abbondandolo, Barney Bramham, Umberto~L. Hryniewicz, and Pedro A.~S.
  Salom{\~a}o, \emph{Sharp systolic inequalities for {R}eeb flows on the
  three-sphere}, Invent. Math. \textbf{211} (2018), no.~2, 687--778.

\bibitem[ABHS18c]{ABHS18b}
\bysame, \emph{Systolic ratio, index of closed orbits and convexity for tight
  contact forms on the three-sphere}, Compos. Math. \textbf{154} (2018),
  no.~12, 2643--2680.

\bibitem[ABHS19]{ABHS19}
\bysame, \emph{Contact forms with large systolic ratio in dimension three},
  Ann. Sc. Norm. Super. Pisa Cl. Sci. (5) \textbf{19} (2019), no.~4,
  1561--1582.

\bibitem[AM13]{AM13}
Alberto Abbondandolo and Rostislav Matveyev, \emph{How large is the shadow of a
  symplectic ball?}, J. Topol. Anal. \textbf{5} (2013), no.~1, 87--119.

\bibitem[AM15]{AM15}
Alberto Abbondandolo and Pietro Majer, \emph{A non-squeezing theorem for convex
  symplectic images of the {H}ilbert ball}, Calc. Var. Partial Differential
  Equations \textbf{54} (2015), no.~2, 1469--1506.

\bibitem[AP13]{MO13}
Juan~Carlos \'{A}lvarez Paiva, \emph{Easy proof of topological property of
  {Z}oll manifolds}, MathOverflow question,
  \href{https://mathoverflow.net/questions/137770}{mathoverflow.net/questions/137770},
  2013.

\bibitem[APB14]{AB14}
Juan~Carlos \'{A}lvarez Paiva and Florent Balacheff, \emph{Contact geometry and
  isosystolic inequalities}, Geom. Funct. Anal. \textbf{24} (2014), no.~2,
  648--669.

\bibitem[APBT16]{ABT16}
Juan~Carlos \'{A}lvarez Paiva, Florent Balacheff, and Kroum Tzanev,
  \emph{Isosystolic inequalities for optical hypersurfaces}, Adv. Math.
  \textbf{301} (2016), 934--972.

\bibitem[Bab92]{Bab92}
Ivan~K. Babenko, \emph{Asymptotic invariants of smooth manifolds}, Izv. Ross.
  Akad. Nauk Ser. Mat. \textbf{56} (1992), no.~4, 707--751.

\bibitem[Bal10]{Bal10}
Florent Balacheff, \emph{A local optimal diastolic inequality on the
  two-sphere}, J. Topol. Anal. \textbf{2} (2010), no.~1, 109--121.

\bibitem[Bal16]{Bal16}
Werner Ballmann, \emph{Lectures on the {B}laschke conjecture}, lecture notes at
  http://people.mpim-bonn.mpg.de/hwbllmnn/archiv/blaschke1603.pdf, 2016.

\bibitem[Bav86]{Bav86}
Christophe Bavard, \emph{In\'{e}galit\'{e} isosystolique pour la bouteille de
  {K}lein}, Math. Ann. \textbf{274} (1986), no.~3, 439--441.

\bibitem[Ber03]{Ber03}
Marcel Berger, \emph{A panoramic view of {R}iemannian geometry},
  Springer-Verlag, Berlin, 2003.

\bibitem[Ber08]{Ber08}
\bysame, \emph{What is{$\ldots$}a systole?}, Notices Amer. Math. Soc.
  \textbf{55} (2008), no.~3, 374--376.

\bibitem[Bes78]{Bes78}
Arthur~L. Besse, \emph{Manifolds all of whose geodesics are closed}, Ergebnisse
  der Mathematik und ihrer Grenzgebiete [Results in Mathematics and Related
  Areas], vol.~93, Springer-Verlag, Berlin-New York, 1978, With appendices by
  D. B. A. Epstein, J.-P. Bourguignon, L. B\'{e}rard-Bergery, M. Berger and J.
  L. Kazdan.

\bibitem[Bir17]{Bir17}
George~D. Birkhoff, \emph{Dynamical systems with two degrees of freedom},
  Trans. Amer. Math. Soc. \textbf{18} (1917), no.~2, 199--300.

\bibitem[BK20]{BK20}
Gabriele Benedetti and Jungsoo Kang, \emph{A local contact systolic inequality
  in dimension three}, to appear on J. Eur. Math. Soc. (JEMS), online first,
  \href{https://doi.org/10.4171/JEMS/1022}{doi.org/10.4171/JEMS/1022}, 2020.

\bibitem[Bla17]{Bla17}
Wilhelm Blaschke, \emph{\"{U}ber affine {G}eometrie {VII}: {N}eue
  {E}xtremeingenschaften von {E}llipse und {E}llipsoid}, Ber. Verh. Sächs.
  Akad. Wiss. Leipzig Math.-Phys. Kl. \textbf{69} (1917), 412--420.

\bibitem[BW58]{BW58}
William~M. Boothby and Hsieu-Chung Wang, \emph{On contact manifolds}, Ann. of
  Math. (2) \textbf{68} (1958), 721--734.

\bibitem[Cro88]{Cro88}
Christopher~B. Croke, \emph{Area and the length of the shortest closed
  geodesic}, J. Differential Geom. \textbf{27} (1988), no.~1, 1--21.

\bibitem[FM12]{FM12}
Benson Farb and Dan Margalit, \emph{A primer on mapping class groups},
  Princeton Mathematical Series, vol.~49, Princeton University Press,
  Princeton, NJ, 2012.

\bibitem[Fun13]{Fun13}
Paul Funk, \emph{\"{U}ber {F}l\"{a}chen mit lauter geschlossenen
  geod\"{a}tischen {L}inien}, Math. Ann. \textbf{74} (1913), no.~2, 278--300.

\bibitem[GHR20]{GHR20}
Jean Gutt, Michael Hutchings, and Vinicius Ramos, \emph{Examples around the
  strong {V}iterbo conjecture}, preprint available at
  \href{https://arxiv.org/abs/2003.10854}{arXiv:2003.10854}, 2020.

\bibitem[Gre63]{Gre63}
Leo~W. Green, \emph{Auf {W}iedersehensfl\"{a}chen}, Ann. of Math. (2)
  \textbf{78} (1963), 289--299.

\bibitem[Gro83]{Gro83}
Mikhael Gromov, \emph{Filling {R}iemannian manifolds}, J. Differential Geom.
  \textbf{18} (1983), no.~1, 1--147.

\bibitem[Gro85]{Gro85}
\bysame, \emph{Pseudo holomorphic curves in symplectic manifolds}, Invent.
  Math. \textbf{82} (1985), no.~2, 307--347.

\bibitem[Gui76]{Gui76}
Victor Guillemin, \emph{The {R}adon transform on {Z}oll surfaces}, Advances in
  Math. \textbf{22} (1976), no.~1, 85--119.

\bibitem[Gut10]{Gut10}
Larry Guth, \emph{Metaphors in systolic geometry}, Proceedings of the
  {I}nternational {C}ongress of {M}athematicians. {V}olume {II}, Hindustan Book
  Agency, New Delhi, 2010, pp.~745--768.

\bibitem[Had98]{Had98}
Jacques Hadamard, \emph{Sur la forme des lignes g\'{e}od\'{e}siques \`a
  l'infini et sur les g\'{e}od\'{e}siques des surfaces r\'{e}gl\'{e}es du
  second ordre}, Bull. Soc. Math. France \textbf{26} (1898), 195--216.
  \MR{1504321}

\bibitem[HV88]{HV88}
Helmut Hofer and Claude Viterbo, \emph{The {W}einstein conjecture in cotangent
  bundles and related results}, Ann. Scuola Norm. Sup. Pisa Cl. Sci. (4)
  \textbf{15} (1988), no.~3, 411--445 (1989).

\bibitem[HZ11]{HZ11}
Helmut Hofer and Eduard Zehnder, \emph{Symplectic invariants and {H}amiltonian
  dynamics}, Modern Birkh\"auser Classics, Birkh\"auser Verlag, Basel, 2011,
  Reprint of the 1994 edition.

\bibitem[Kat07]{Kat07}
Mikhail~G. Katz, \emph{Systolic geometry and topology}, Mathematical Surveys
  and Monographs, vol. 137, American Mathematical Society, Providence, RI,
  2007, With an appendix by Jake P. Solomon.

\bibitem[KN20]{KN20}
Mikhail~G. Katz and Tahl Nowik, \emph{A systolic inequality with remainder in
  the real projective plane}, Open Math. \textbf{18} (2020), no.~1, 902--906.

\bibitem[LF51]{LF51}
Lazar'~A. Lyusternik and Abram~I. Fet, \emph{Variational problems on closed
  manifolds}, Doklady Akad. Nauk SSSR (N.S.) \textbf{81} (1951), 17--18.
  \MR{0044760}

\bibitem[Mah39]{Mah39}
Kurt Mahler, \emph{Ein \"{U}bertragungsprinzip f\"{u}r konvexe {K}\"{o}rper},
  \v{C}asopis P\v{e}st. Mat. Fys. \textbf{68} (1939), 93--102.

\bibitem[Pat99]{Pat99}
Gabriel~P. Paternain, \emph{Geodesic flows}, Progress in Mathematics, vol. 180,
  Birkh\"{a}user Boston, Inc., Boston, MA, 1999.

\bibitem[Pu52]{Pu52}
Pao~Ming Pu, \emph{Some inequalities in certain nonorientable {R}iemannian
  manifolds}, Pacific J. Math. \textbf{2} (1952), 55--71.

\bibitem[Rot06]{Rot06}
Regina Rotman, \emph{The length of a shortest closed geodesic and the area of a
  2-dimensional sphere}, Proc. Amer. Math. Soc. \textbf{134} (2006), no.~10,
  3041--3047.

\bibitem[Sab10]{Sab10}
St\'{e}phane Sabourau, \emph{Local extremality of the {C}alabi-{C}roke sphere
  for the length of the shortest closed geodesic}, J. Lond. Math. Soc. (2)
  \textbf{82} (2010), no.~3, 549--562.

\bibitem[Sag18]{Sag18}
Murat Saglam, \emph{Contact forms with large systolic ratio in arbitrary
  dimensions}, to appear in Ann.~Sc.~Norm.~Super.~Pisa Cl.~Sci.~(5), preprint
  available at \href{https://arxiv.org/abs/1806.01967}{arXiv:1806.01967}, 2018.

\bibitem[San49]{San49}
Luis~A. Santal\'{o}, \emph{An affine invariant for convex bodies of
  {$n$}-dimensional space}, Portugal. Math. \textbf{8} (1949), 155--161.

\bibitem[Tao07]{Tao07}
Terence Tao, \emph{Open question: the {M}ahler conjecture on convex bodies},
  blog post at
  \href{https://terrytao.wordpress.com/2007/03/08/open-problem-the-mahler-conjecture-on-convex-bodies}{terrytao.wordpress.com},
  2007.

\bibitem[Tau07]{Tau07}
Clifford~Henry Taubes, \emph{The {S}eiberg-{W}itten equations and the
  {W}einstein conjecture}, Geom. Topol. \textbf{11} (2007), 2117--2202.

\bibitem[Vai94]{Vai94}
Izu Vaisman, \emph{Lectures on the geometry of {P}oisson manifolds}, Progress
  in Mathematics, vol. 118, Birkh\"{a}user Verlag, Basel, 1994.

\bibitem[Vit87]{Vit87}
Claude Viterbo, \emph{A proof of {W}einstein's conjecture in {${\bf R}^{2n}$}},
  Ann. Inst. H. Poincar\'{e} Anal. Non Lin\'{e}aire \textbf{4} (1987), no.~4,
  337--356.

\bibitem[Vit00]{Vit00}
\bysame, \emph{Metric and isoperimetric problems in symplectic geometry}, J.
  Amer. Math. Soc. \textbf{13} (2000), no.~2, 411--431.

\bibitem[Wei74]{Wei74}
Alan Weinstein, \emph{On the volume of manifolds all of whose geodesics are
  closed}, J. Differential Geometry \textbf{9} (1974), 513--517.

\bibitem[Wei79]{Wei79}
\bysame, \emph{On the hypotheses of {R}abinowitz' periodic orbit theorems}, J.
  Differential Equations \textbf{33} (1979), no.~3, 353--358.

\bibitem[Zol03]{Zol03}
Otto Zoll, \emph{Ueber {F}l\"{a}chen mit {S}charen geschlossener
  geod\"{a}tischer {L}inien}, Math. Ann. \textbf{57} (1903), no.~1, 108--133.

\end{thebibliography}
\end{document}